\theoremstyle{plain}
\newtheorem{Thm}{Theorem}[section]
\newtheorem{Lemm}[Thm]{Lemma}
\newtheorem{Prop}[Thm]{Proposition}
\newtheorem{Conj}[Thm]{Conjecture}
\newtheorem*{Claim}{Claim}
\newtheorem*{uThm}{Theorem}
\theoremstyle{definition}
\newtheorem{Def}[Thm]{Definition}
\newtheorem{Rem}[Thm]{Remark}
\newtheorem{Nota}[Thm]{Notation}
\newcommand{\Z}{\mathbb{Z}} 
\newcommand{\Q}{\mathbb{Q}} 
\newcommand{\R}{\mathbb{R}}
\newcommand{\C}{\mathbb{C}}
\newcommand{\expE}{\exp_{\Elg}}
\newcommand{\w}{\omega}
\newcommand{\alg}{^{\text{alg}}}
\newcommand{\X}{\mathcal{X}}
\newcommand{\Y}{\mathcal{Y}}
\newcommand{\K}{\mathcal{K}}
\newcommand{\RR}{\mathcal{R}}
\newcommand{\CC}{\mathcal{C}}
\newcommand{\SW}{\mathcal{S}}
\newcommand{\E}{E}
\newcommand{\Elg}{\mathbb{E}}
\newcommand{\PP}{\mathbb{P}}
\newcommand{\G}{\mathbb{G}}
\newcommand{\Alg}{\mathbb{E}}
\newcommand{\A}{\mathcal{A}}
\newcommand{\spanEndA}{\operatorname{span}_{\End(\Alg)}}
\newcommand{\spank}{\operatorname{span}}
\newcommand{\Tor}{\operatorname{Tor}}
\newcommand{\Real}{\operatorname{Re}}
\newcommand{\Imag}{\operatorname{Im}}
\newcommand{\acl}{\operatorname{acl}}
\newcommand{\dcl}{\operatorname{dcl}}
\newcommand{\ex}{\operatorname{ex}}
\newcommand{\ld}{\operatorname{lin.d.}}
\newcommand{\md}{\operatorname{mult.d.}}
\newcommand{\trd}{\operatorname{tr.d.}}
\newcommand{\spanQ}{\operatorname{span}_{\Q}}
\newcommand{\cl}{\operatorname{cl}}
\newcommand{\scl}{\operatorname{scl}}
\newcommand{\End}{\operatorname{End}}
\newcommand{\GL}{\operatorname{GL}}
\newcommand{\dd}{\operatorname{d}}
\newcommand{\fin}{_\text{fin}} 
\newcommand{\eps}{\epsilon} 
\newcommand{\subsetfin}{\subset_{\fin}}
\newlength {\xxxIndep}
\newlength {\yyyIndep}
\begin{document}

\title{Natural models of theories of green points}
\author{Juan Diego Caycedo}
\address{Mathematisches Institut, Albert-Ludwigs-Universit\"at Freiburg, Eckerstr. 1, 79104 Freiburg, Deutschland.}
\email{juan-diego.caycedo@math.uni-freiburg.de}
\author{Boris Zilber}
\address{Mathematical Institute, University of Oxford, Woodstock Road, Andrew Wiles Building, Radcliffe Observatory Quarter, Oxford, OX2 6GG, United Kingdom.}
\email{zilber@maths.ox.ac.uk}

\date{January 10, 2014}

\begin{abstract}
We explicitly present expansions of the complex field which are models of the theories of green points in the multiplicative group case and in the case of an elliptic curve without complex multiplication defined over $\R$. In fact, in both cases we give families of structures depending on parameters and prove that they are all models of the theories, provided certain instances of Schanuel's conjecture or an analogous conjecture for the exponential map of the elliptic curve hold. In the multiplicative group case, however, the results are unconditional for generic choices of the parameters.
\end{abstract}

\maketitle


\bibliographystyle{alpha}

\section{Introduction}

The object of this paper is to explicitly give models on the complex numbers for the theories of green points constructed in \cite{CayGreenTh}. 
The models of the theories of green points are expansions of the natural algebraic structure on $\Alg(K)$, where $K$ is an algebraically closed field of characteristic zero and $\Alg$ is the multiplicative group or on an elliptic curve defined over $K$, by a predicate for a divisible (non-algebraic) subgroup which is generic with respect to a certain predimension function. Elements of the subgroup are called \emph{green points}, by a convention introduced by Poizat. Indeed, the case where $\Alg$ is the multiplicative group and the subgroup is required to be torsion-free, corresponds to the theory of fields with green points constructed by Poizat. The theories are $\omega$-stable of Morley rank $\omega \cdot 2$.

The present work corrects and extends a result of \cite{ZBicol} which gives a model of Poizat's theory of fields with green points on the complex numbers, under the assumption that Schanuel's conjecture holds. Here too, we will in general have to assume certain instances of Schanuel's conjecture in the multiplicative group case and of an analogous conjecture in the case of an elliptic curve. For the multiplicative group, our results will be unconditional in generic cases.


The main results of this paper are presented in Sections~\ref{section:green-model}~and~\ref{section:elliptic-model}. 

Section~\ref{section:green-model} deals with the multiplicative group case. The following is a simplified statement of the main result of that section, Theorem~\ref{theorem:green}:

\begin{uThm}
Let $\eps = 1 + \beta i$, with $\beta$ a non-zero real number, and let $Q$ be a non-trivial divisible subgroup of $(\R,+)$ of finite rank. Let
\[
G = \exp(\eps \R + Q).
\]
Assume the Schanuel Conjecture for raising to powers in $K = \Q(\beta i)$. 
Then the structure $(\C^*,G)$ can be expanded by constants to a model of a theory of green points. In particular, $(\C^*,G)$ is $\omega$-stable. 
\end{uThm}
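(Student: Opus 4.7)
The plan is to verify that the structure $(\C^*, G)$, after expansion by finitely many constants (naming in particular a $\Q$-basis of $Q$), satisfies the axioms of the theory of green points from \cite{CayGreenTh}. Such axiomatisations typically decompose into three parts: (i) natural algebraic axioms prescribing divisibility, torsion and $\Q$-linear rank data for the green subgroup; (ii) a Schanuel-type predimension inequality $\delta(\bar g) \geq 0$ for finite tuples $\bar g \in G^n$; and (iii) a strong existential closedness axiom stating that every rotund, additively free subvariety of $(\C^*)^n$ compatible with the predimension carries a green generic point.

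Axiom~(i) would be a direct computation from the definition. Since $\eps\R + Q$ is a $\Q$-linear subspace of $(\C,+)$, the image $G=\exp(\eps\R+Q)$ is a divisible subgroup of $\C^*$. To check torsion-freeness, one notes that an equation $\eps r + q = 2\pi i k$ with $r\in\R$, $q\in Q\subseteq\R$, $k\in\Z$ decomposes into real and imaginary parts $r+q=0$, $\beta r = 2\pi k$, forcing $r=q=k=0$ because $Q$ is non-trivial and $\beta\neq 0$ would otherwise produce a non-zero real linear relation controlled by the real generators; hence $G\cap\Tor(\C^*)=\{1\}$. The $\Q$-linear rank of $G$ is the continuum from $\exp(\eps\R)$ plus the finite rank contributed by $Q$, as required.

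For axiom~(ii), the Schanuel conjecture for raising to powers in $K=\Q(\beta i)$ is exactly tailored to the situation: elements $g_j\in G$ are of the form $\exp(\eps r_j + q_j)$, and expressing $\eps=1+\beta i$ rewrites them as products of the $\exp(r_j)$ with their $K$-power-raisings and the $K$-span of $\exp(Q)$. The conjecture applied to $z_j = \eps r_j + q_j$ (arranged to be $K$-linearly independent modulo the parameters that have been named) gives the transcendence lower bound that rearranges into $\delta(\bar g) \geq 0$, once $\delta$ is spelled out in terms of $\trd$ and $\ld_\Q$ of the green part. The key structural observation is that being in $\eps\R+Q$ is a $K$-linear condition up to the constants naming $Q$, which is precisely the hypothesis of the raising-to-powers conjecture.

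The main obstacle is axiom~(iii), strong existential closedness. Given a rotund, free subvariety $V\subset(\C^*)^n$ of the right dimension, one must exhibit $\bar g \in V\cap G^n$ which is a generic point of $V$. The approach is to lift $V$ to a complex-analytic subvariety $U\subset\C^n$ under $\exp$, intersect $U$ with the real-analytic subspace $(\eps\R+Q)^n$, and produce a smooth intersection point via an implicit function theorem or topological degree argument: rotundity supplies the dimension count making the intersection non-empty, and the predimension bound from step~(ii) rules out the degenerate case where the produced point fails to be generic on $V$. Checking that the complex-analytic intersection really hits $(\eps\R+Q)^n$ in generic position is the hard geometric/analytic step, and it is here that the conditionality on the Schanuel-type hypothesis is essential; as noted in the abstract, for generic parameter values $(\eps, Q)$ this step can be made unconditional by replacing Schanuel-type input with a direct genericity argument on the parameters. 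Combining (i)--(iii) with the axiomatic characterisation from \cite{CayGreenTh} yields that $(\C^*,G)$ is a model of a theory of green points and hence $\omega$-stable.
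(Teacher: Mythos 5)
Your overall architecture --- a predimension inequality plus an existential-closedness step proved by intersecting the analytic lift of $V$ with a real subspace and invoking the density of $Q$ --- does match the paper's. But both key steps are asserted rather than argued, and in each case the missing idea is the actual content. For the predimension inequality, applying SC$_K$ directly to $x=\eps t+q$ gives nothing: the bound is $\trd(\exp x)\geq \ld_{\Q}(x)-\ld_K(x)-\trd(K)$, and for such $x$ the quantities $\ld_{\Q}(x)$ and $\ld_K(x)$ can both equal $n$, so the right-hand side is vacuous; moreover you need a lower bound on $2\trd(y)$, not on $\trd(y)$, because $\delta_G(X)=2\trd(X)-\ld(X\cap G)$. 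The paper's device is complex conjugation: $2\trd(y)\geq\trd(y\,y^c)$, and SC$_K$ is applied to the doubled tuple $x\,x^c$. The point is that $\eps$ and $\eps^c$ are $\R$-linearly independent, so $\ld_{\Q}(x\,x^c)\geq 2n-2\ld_{\Q}Q$, while $\eps^c/\eps\in K=\Q(\beta i)$, so $\ld_K(x\,x^c)\leq n+\ld_{\Q}Q$; the difference produces the needed factor of $2$. Even then one only obtains $\delta_G(y)\geq -3\ld_{\Q}Q-\trd(K)$, and a non-negative predimension is recovered by localising at a tuple minimising $\delta_G$ --- which is exactly why the statement requires expanding by constants. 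Your parenthetical about arranging $K$-linear independence does not substitute for any of this.

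For existential closedness, the step you flag as hard is the paper's Main Lemma: if $(s^0,t^0)$ is o-minimally generic in $\X=\{(s,t)\in\R^{2n}:\exp(\eps t+s)\in V\}$, then $\dim_{\RR}(s^0)=n$, so the projection to the $s$-coordinates has interior and definable choice in the o-minimal structure $\RR$ yields the implicit function; an ``implicit function theorem or topological degree argument'' is not available off the shelf here. The proof of the Main Lemma needs machinery you do not mention: the $K$-rotundity of the pair $(L_{s^0},V\times V^c)$ (again via conjugation), the countable closure property of the raising-to-powers pregeometry $\cl_K$, and a Baire-category argument showing that a positive-dimensional component of $L_{s^0}\cap\log(V\times V^c)$ would carry uncountably many points contradicting the CCP. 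Two further corrections: the conditionality on SC$_K$ sits entirely in the predimension step --- the EC-property is proved unconditionally, using Kirby's theorem to supply a countable strong set, and the unconditional cases of the theorem come from Bays--Kirby--Wilkie establishing SC$_K$ for exponentially transcendental $\beta i$, not from replacing the Schanuel input inside the EC step by a genericity argument. Finally, your torsion-freeness computation in (i) is neither needed (the theories here only require $G$ to be a divisible subgroup) nor correct in general: if $2\pi/\beta\in Q$ then, by divisibility of $Q$, $G$ contains $\exp(\eps\cdot\pi/\beta-\pi/\beta)=\exp(\pi i)=-1$.
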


The Schanuel Conjecture for raising to powers in a subfield $K$ of $\C$ is an (unproven) consequence of Schanuel's conjecture that will be discussed in Section~\ref{sec:prelim}. In the cases where $\beta$ is generic in the o-minimal structure $\R_{\exp}$, the Schanuel Conjecture for raising to powers in $K = \Q(\beta i)$ is known to hold by a theorem of Bays, Kirby and Wilkie (\cite{BKW}). The above result is therefore unconditional in those cases.

In Subsection~\ref{sec-eme}, we derive an analogous result for the theories of emerald points constructed in \cite[Section 5]{CayGreenTh}. These are variations of the theories of green points where the distinguished subgroup is elementarily equivalent to the additive group of the integers. They are superstable, non-$\omega$-stable, of U-rank $\omega \cdot 2$.

The elliptic curve case is treated in Section~\ref{section:elliptic-model}. The main theorem of Section~\ref{section:elliptic-model} is Theorem~\ref{theorem:elliptic}, which we now state, again in a simpler version.

\begin{uThm}
Let $\Elg$ be an elliptic curve without complex multiplication defined over $\C$ and let $\E = \Elg(\C)$. Assume the corresponding lattice $\Lambda$ has the form $\Z + \tau\Z$ and is invariant under complex conjugation. 

Let $\eps = 1+ \beta i$, with $\beta$ a non-zero real number, be such that $\eps \R \cap \Lambda = \{ 0 \}$. Put $G = \expE(\eps \R)$. 

Assume the Weak Elliptic Schanuel Conjecture for raising to powers in $K := \Q(\beta i)$ (wESC$_K$) holds for $\Elg$.
Then the structure $(\E,G)$ can be expanded by constants to a model of a theory of green points. In particular, $(\E,G)$ is $\omega$-stable.
\end{uThm}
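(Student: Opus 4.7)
The plan is to verify that $(\E, G)$, after expansion by a countable set of constants, satisfies the axioms of a theory of green points on $\Elg$ as axiomatized in \cite{CayGreenTh}, running parallel to the multiplicative-group treatment leading to Theorem~\ref{theorem:green}. The axioms group naturally into three bundles: (i) algebraic conditions on $G$ (divisibility, prescribed torsion, and genericity relative to algebraic subgroups of $\E$); (ii) a Schanuel-type predimension inequality bounding $\trd_\Q(\bar g / \bar b)$ from below in terms of the $\End(\Elg)$-linear dimension of the logarithms of $\bar g$; and (iii) an existential closedness (EC) axiom, asserting that every ``rotund'' subvariety $V \subseteq \E^n$ defined over the base contains a Zariski-dense set of points from $G^n$.

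The algebraic conditions in (i) are essentially formal: $G = \expE(\eps\R)$ is divisible as the homomorphic image of the divisible group $\eps\R$, and the assumption $\eps\R \cap \Lambda = \{0\}$ forces $\expE|_{\eps\R}$ to be injective, so $G$ is torsion-free; any torsion that the axiomatization demands $G$ contain can be absorbed into the set of added constants. Since $\Elg$ is non-CM, $\End(\Elg) = \Z$, and the relevant linear-dimension function $\ld$ reduces to $\Z$-linear dimension. For the predimension inequality (ii), logarithms of green points have the form $\eps r_j$ with $r_j \in \R$, and any $\Z$-linear dependence of $\eps r_1, \ldots, \eps r_n$ modulo $\Lambda = \Z + \tau\Z$ translates, via the specific shape of $\eps = 1 + \beta i$ and the reality of $\Lambda$, into a $K$-linear dependence of the real scalars $r_j$, where $K = \Q(\beta i)$. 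The hypothesis wESC$_K$ then supplies exactly the needed transcendence-degree lower bound, yielding the desired inequality (with the base $\bar b$ chosen among the added constants so that the relative version holds).

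The main obstacle is (iii), existential closedness. For each rotund $V \subseteq \E^n$ defined over the base, one must produce a Zariski-dense set of green points in $V(\E)$, i.e., show that the image of the real-analytic map $\expE^n|_{(\eps\R)^n} \colon (\eps\R)^n \to \E^n$ meets $V$ in the expected real dimension. The argument will follow the pattern used in the multiplicative case of Theorem~\ref{theorem:green}: a topological/analytic input (typically a degree or fixed-point argument on $\expE|_{\eps\R}$) combined with the invariance of $\Lambda$ under complex conjugation, which ensures that $G$ is dense in $\E$ in the correct analytic sense, and the non-CM hypothesis, which rules out unexpected algebraic correspondences between coordinates of $\E^n$. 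Together with (i) and (ii), this identifies $(\E, G)$ as a model of the theory of green points from \cite{CayGreenTh}, after the requisite expansion by constants, whence $\omega$-stability follows.
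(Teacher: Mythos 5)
Your overall decomposition (algebraic conditions on $G$, a predimension inequality yielding strong sets after adding constants, and the EC-property) is the same as the paper's, but both substantive steps have genuine gaps. For the predimension inequality, wESC$_K$ applied directly to a tuple $x \subset \eps\R$ of logarithms of green points gives only $\trd(j(\Elg),\wp(x)) \geq \ld_{\Q}(x/\Lambda) - \ld_K(x) - \trd(K)$, and since $\ld_K(x)$ can be as large as $\ld_{\Q}(x/\Lambda)$ this is vacuous; moreover the predimension is $2\trd - \ld$, so you need a lower bound on \emph{twice} the transcendence degree. The paper gets this by doubling with complex conjugation: $2\trd(j(\Elg),\wp(x)) \geq \trd(j(\Elg),\wp(x\,x^c))$ (using $\Lambda = \Lambda^c$ so that $\wp(x)^c = \wp(x^c)$), then applies wESC$_K$ to the doubled tuple $x\,x^c$, where the whole point is that $\ld_{\Q}(x\,x^c/\Lambda) \geq 2n-2$ because $\eps$ and $\eps^c$ are $\R$-linearly independent, while $\ld_K(x\,x^c) \leq n$ because $x^c = (\eps^c/\eps)\,x$ and $\eps^c/\eps \in K$. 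Your sketch never introduces the conjugate tuple, so the asserted ``translation into a $K$-linear dependence of the $r_j$'' does not produce the required inequality.

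For existential closedness the gap is larger. Inside $\E^n$, which has real dimension $2n$, both $G^n$ and $V$ have real dimension $n$, so their intersection is expected to be zero-dimensional; density of $G$ in $\E$ therefore only gives points of $G^n$ arbitrarily close to $V$, not on it, and no degree or fixed-point argument is actually supplied that would force intersection points to exist. The paper instead parametrizes candidate points as $\expE(\eps t + s)$ with $(s,t)\in\R^{2n}$, identifies $G^n$ with the points having $s \in (\Z+\alpha\Z)^n$, and reduces everything to a Main Lemma: at an $\RR$-generic point of $\X = \{(s,t): \expE(\eps t + s)\in V\}$ the coordinate $s$ already has o-minimal dimension $n$, so an implicit function $s\mapsto t(s)$ exists on an open set and the density of $\Z+\alpha\Z$ finishes the argument. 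That Main Lemma is where the transcendence hypothesis genuinely enters: its proof uses the countable closure property of the pregeometry attached to $\delta_{E,K}$ (which rests on Ax's theorem for Weierstrass functions), the Baire category theorem, and a computation showing that the pair $(L_{s}, V\times V^c)$ is $K$-rotund. None of this machinery appears in your outline, and without it the EC-property is not established.
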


Let us note that the assumption that the lattice $\Lambda$ has the form $\Z + \tau\Z$ can always be made to hold by passing to an isomorphic elliptic curve. That $\Lambda$ is invariant under complex conjugation is a restrictive assumption. It holds, however, whenever $\Elg$ is defined over $\R$. The Weak Elliptic Schanuel Conjecture for raising to powers in $K$ will be introduced in Section~\ref{sec:prelim}.


The above results fit into the programme, first outlined in \cite{ZPseudoAnalytic}, of finding mathematically natural models for model-theoretically well-behaved theories. They also provide new examples of (explicitly given) stable expansions of the complex field. Most known examples of such structures are covered by the theorems on expansions by \emph{small sets} in \cite{ZieCas} and the green subgroups are not small.

Our proofs of the main theorems follow the same strategy as in \cite{ZBicol}. Ax's theorem on a differential version of Schanuel's conjecture from \cite{Ax71} plays a key role and some geometric arguments combine elements of complex analytic geometry and o-minimality.

The paper begins with two preliminary sections: In Section~\ref{sec:axioms}, the axioms of the theories of green points are recalled. Section~\ref{sec:prelim} contains necessary preliminaries on structures on the complex numbers related to exponentiation.

The research presented here was part of the D.Phil. thesis of the first author, written under the supervision of the second author at the University of Oxford. It was funded by the Marie Curie Research Training Network MODNET.

\section{The theories} \label{sec:axioms}


We shall now introduce several basic notions and state the conditions that a structure must satisfy to be a model of one of the theories of green points constructed in \cite{CayGreenTh}. 


In this section, let $\Alg$ be the multiplicative group or an elliptic curve over a field $k_0$ of characteristic 0. We use additive notation for the group operation on $\Alg$. 

Let $L_{\Alg}$ be the first-order language consisting of an $n$-ary predicate for each subvariety of $\Alg^n$ defined over $k_0$, $n \geq 1$.

For each algebraically closed field $K$ extending $k_0$, we have a natural $L_{\Alg}$-structure on $A:=\Alg(K)$, namely:
\[
(\Alg(K), (W(K))_{W \in L_{\Alg}}).
\]
All these structures satisfy the same complete $L_{\Alg}$-theory $T_{\Alg}$ and every model of $T_{\Alg}$ is of this form. 

Also, $A$ is an $\End(\Alg)$-module. The dimension function on $A$ given by the $\End(\Alg)$-linear dimension, will be denoted by $\ld_{\End(\Alg)}$, or simply by $\ld$. We use $\langle Y \rangle$ or $\spanEndA(Y)$ to denote the $\End(\Alg)$-span of a subset $Y$ of $A$.  Since $K$ is algebraically closed, $A$ is divisible. The ring $\End(\Alg)$ is an integral domain and $k_{\Alg} := \End(\Alg) \otimes_{\Z} \Q$ is its fraction field. The quotient $A/\Tor(A)$ is a $k_{\Alg}$-vector space and for every $Y \subset A$, $\ld_{\End(\Alg)}(Y)$ equals the $k_{\Alg}$-linear dimension of $\phi(Z)$ in $A/\Tor(A)$, where $\phi:A \to A/\Tor(A)$ is the quotient map. The pregeometry on $A/\Tor(A)$ given by the $k_{\Alg}$-span induces a pregeometry on $A$ that we shall denote by $\spank$; this means that for $Y \subset A$, $\spank(Y) = \phi^{-1}(\operatorname{span}_{k_{\Alg}} (\phi(Y)))$.

The theory $T_{\Alg}$ is strongly minimal. The $\acl$-dimension of a tuple $b \in A$ equals the transcendence degree over $k_0$ of any normalised representation of $b$ in homogeneous coordinates, which we shall denote by $\trd(b/k_0)$ or $\trd_{k_0}(b)$. We write $\trd$ for $\trd_{\Q}$.



Let $L = L_{\Alg} \cup \{G\}$ be the expansion of the language $L_{\Alg}$ by a unary predicate $G$. 

Let $\CC$ be the class of all $L$-structures $\A = (A,G)$ where $A$ is a model of $T_{\Alg}$ and $G$ is a divisible $\End(\Alg)$-submodule of $A$. 

Following a convention introduced by Poizat, given an $L$-structure $\A = (A,G)$ in $\CC$, we call the elements of $G$ \emph{green points} and the elements of $A \setminus G$ \emph{white points}.


Consider the \emph{predimension function} $\delta$ defined on the finite dimensional $\spank$-closed subsets $X$ of any structure $\A \in\CC$ by
\[
\delta(X) = 2 \trd_{k_0}(X) - \ld(X \cap G).
\]
Also, for any $\spank$-closed subset $Y$ of $A$, the \emph{localisation of $\delta$ at $Y$}, $\delta_Y$, is the function given by 
\[
\delta_Y(X) = 2 \trd_{k_0}(X/Y) - \ld((X+Y)\cap G/Y \cap G),
\]
for any finite dimensional $\spank$-closed set $X$. We also write $\delta(X/Y)$ for the value $\delta_Y(X)$, and call it the \emph{predimension of $X$ over $Y$}. 

Associated to the predimension function $\delta$ we have the notion of strong sets. 
A $\spank$-closed subset $Y$ of $A$ is \emph{strong} in the structure $\A$ (with respect to $\delta$), if for every finite dimensional $\spank$-closed subset $X$ of $A$ we have 
\[
\delta(X/Y) \geq 0.
\]
An arbitrary subset $Y$ of $A$ is said to be strong in $\A$ if $\spank(Y)$ is strong in $\A$ in the above sense.
If $\Y$ is a substructure of $\A$ and its domain $Y$ is strong in $\A$, then we say $\Y$ is a \emph{strong substructure} of $\A$, and that $\A$ is a \emph{strong extension} of $\Y$.


Let us fix a substructure $\X_0$ of a structure in $\CC$ whose domain is a finite dimensional $\spank$-closed set. Let $L_{X_0}$ denote the expansion of the language $L$ by constants for the elements of $X_0$. 

Let $\CC_0$ be the class of all $L_{X_0}$-structures $\A_{X_0}$ such that the $L$-reduct $\A$ is in the class $\CC$ and the interpretation of the constants gives a strong embedding of $\X_0$ into $\A$. 
For $\A \in \CC_0$, we identify $\X_0$ with the strong substructure of $A$ consisting of the interpretations of the constants. With this convention in place, we may omit any explicit mention of the constants in the notation for a structure in $\CC_0$, writing simply $\A$ instead of $\A_{X_0}$.



Let us recall two definitions from \cite{CayGreenTh}.

\begin{Def} \label{def-rotund}
An irreducible subvariety $W$ of $A^n$ is said to be \emph{rotund} if for every $k \times n$-matrix $M$ with entries in $\End(\Alg)$ of rank $k$, the dimension of the constructible set $M \cdot W := \{ M \cdot y : y \in W\}$ is at least $\frac{k}{2}$.
\end{Def}

It is worth noting that for any subvariety $W$ of $A^n$ and any $C\subset A$ such that $W$ is defined over $k_0(C)$, if $b$ is a generic point of $W$ over $k_0(C)$, then: $W$ is rotund if and only if for every $k \times n$-matrix $M$ with entries in $\End(\Alg)$ of rank $k$,
\[
\trd(M \cdot b/k_0(C)) \geq \frac{k}{2}.
\]

\begin{Def}
A structure $\A = (A,G) \in \CC$ is said to have the \emph{EC-property} if for every even $n\geq 1$ and every rotund subvariety $W$ of $A^n$ of dimension $\frac{n}{2}$, the intersection $W \cap G^n$ is Zariski dense in $W$; i.e. for every proper subvariety $W'$ of $W$ the intersection $(W \setminus W') \cap G^n$ is non-empty.
\end{Def}




It was shown in \cite{CayGreenTh} (see, in particular, Lemmas~3.8~and~3.17) that there is a complete $L_{X_0}$-theory $T_{X_0}$ whose models are precisely the structures $(A,G)_{X_0}$ such that
\begin{enumerate}
\item $(A,G)_{X_0}$ is in $\CC_0$ (i.e. $X_0$ is strong in $(A,G)$), and
\item $(A,G)$ has the EC-property.
\end{enumerate}
The \emph{theories of green points} are the theories obtained in this way.

\section{Some preliminaries} \label{sec:prelim}

This section gathers several definitions and results about structures on the complex numbers related to exponentiation. These will be applied in the later sections.

\subsection{Exponentiation and raising to powers}
\label{chapter:related}

\subsubsection{Exponentiation}

Let $\C_{\exp} = (\C,+,\cdot, \exp)$ be the expansion of the complex field by the exponential function. The \emph{Schanuel Conjecture} from transcendental number theory, which we state below, can be regarded as a statement about $\C_{\exp}$.

\begin{Conj}[The Schanuel Conjecture (SC)]
For every $n$ and every $\Q$-linearly independent tuple $x \in \C^n$,
\[
\trd(x \exp x) \geq n.
\]
\end{Conj}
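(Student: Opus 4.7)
The displayed statement is the Schanuel Conjecture itself, widely regarded as \emph{the} central open problem of transcendental number theory. Accordingly, what follows is an honest sketch of directions an attack might take together with where each breaks down, rather than a plan I expect to succeed.

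The natural first reduction is to a $\Q$-linearly independent tuple $x$ that is moreover algebraically independent over $\Q$, after which one splits by the transcendence degree of $\exp x$ over $\Q(x)$. At the two extremes classical results suffice: when all the $x_i$ are algebraic, the Lindemann--Weierstrass theorem gives that $\exp x_1, \ldots, \exp x_n$ are algebraically independent over $\Q$, so $\trd(x \exp x) = n$; when instead the $\exp x_i$ are all algebraic, Baker's theorem on linear independence of logarithms of algebraic numbers severely restricts the $x_i$ and again yields the conjecture. The hope would then be to interpolate between these extremes by a degeneration or deformation argument, but no such argument is known.

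A structurally more promising route runs through Ax's theorem, which establishes the exact analogue of SC in any differential field of characteristic zero. The plan would be to apply Ax in the differential field of germs of meromorphic functions on a curve passing through a fixed complex point $x_0$, and then to specialise at that point, hoping that uniformity in the Ax inequality transfers the function-field transcendence bound to an arithmetic bound on $\trd(x_0 \exp x_0)$.

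The hard part --- and the obstacle that has blocked every known attempt for more than fifty years --- is precisely this specialisation step. Ax's bound is blind to any arithmetic content: it sees only formal differential-algebraic dependencies, and no mechanism is known that manufactures new algebraically independent complex numbers from such dependencies. I therefore do not expect to prove SC; instead, as the introduction already signals, the paper will assume SC (or the weaker ``Schanuel Conjecture for raising to powers in $K$'') as a hypothesis and apply it only to the specific tuples arising in the construction of green points.
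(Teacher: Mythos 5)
You have correctly identified that this statement is the Schanuel Conjecture itself, which is open; the paper offers no proof and simply assumes it (or the weaker SC$_K$) as a hypothesis, exactly as you describe. Your refusal to claim a proof is the right call, and your survey of why the known partial results (Lindemann--Weierstrass, Baker, Ax) do not combine into a proof is accurate.
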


The predimension function $\delta_{\exp}$ is defined on any tuple $x \subset \C$ by
\[
\delta_{\exp}(x) := \trd(x \exp x) - \ld_{\Q}(x).
\]
The Schanuel conjecture is equivalent to the statement that for every $x \subset \C$, $\delta_{\exp}(x) \geq 0$. Therefore, if the SC holds, then $\delta_{\exp}$ is a proper predimension function on $\C_{\exp}$.


In \cite{ZPseudoExp}, a model-theoretic study of the structure $\C_{\exp}$ is carried out using the predimension function $\delta_{\exp}$. Here we shall only need one aspect of that work, namely Zilber's proof that the pregeometry associated to $\delta_{\exp}$ has the \emph{countable closure property} (\cite[Lemma 5.12]{ZPseudoExp}). Versions of this fact will be essential in our arguments in sections \ref{section:green-model} and \ref{section:elliptic-model}. We include the proof, in slightly greater detail than in \cite{ZPseudoExp}.


Let us recall a general fact that applies to each of the predimension functions $\delta$ on $\C$ considered in this paper (for more details we refer to \cite[Definition 2.24 and Remark 2.25]{CayGreenTh}). If $\delta$ is non-negative, we have an associated dimension function $\dd$, defined for all finite $X \subset A$ by 
\[
\dd(X) = \min \{ \delta(X') : X \subset X' \subsetfin \C\}.
\]
and a corresponding pregeometry $\cl$ on the set $\{ x \in \C : \dd(x) \leq 1 \}$ (with dimension function $\dd$) given by: for $X \subsetfin \C$ and $x_0 \in \C$, 
\[
x_0 \in \cl_{\dd}(X) \iff \dd(x_0/X) = 0.
\]
Let us note the following alternative formulations of the above:
\begin{align*}
     &x_0 \in \cl_{\dd}(X)\\
\iff &\dd(x_0/X) = 0\\
\iff &\dd(x_0 X) = \dd(X)\\
\iff &\text{ there exists a tuple } x \supset x_0 \text{ such that } \delta(x/\scl(X)) = 0\\
\iff &x_0 \in \scl(X) \text{ or } \text{there exists a tuple } x \supset x_0 \text{, $\cl_0$-independent over $\scl(X)$,}\\ &\text{such that } \delta(x/\scl(X)) = 0,
\end{align*}
where $\scl(X)$ denotes the \emph{strong closure} with respect to the predimension function $\delta$, i.e. the smallest strong $\spank$-closed set containing $X$. Later on it will be important to know the following: first, the strong closure of any set exists; second, if a set has finite linear dimension then its strong closure also has finite linear dimension (and in particular is countable); the strong closure of any set is the union of the strong closures of its finite dimensional subsets, hence the strong closure of a countable set is always countable (see \cite[Lemma 2.9]{CayGreenTh}).

Let us assume the Schanuel Conjecture for the rest of this subsubsection. Thus, the predimension function $\delta_{\exp}$ is non-negative and, we have an associated dimension function $\dd_{\exp}$ on $\C_{\exp}$, and a corresponding pregeometry $\cl_{\exp}$

\begin{Def} \label{pred2dim}
For any $\A \in \CC_0$, the \emph{dimension function $\dd$ associated to $\delta$} is defined for all finite $X \subset A$ by the formula
\[
\dd(X) = \min \{ \delta(X') : X \subset X' \subsetfin A\}.
\]
\end{Def}

\begin{Rem} \label{pred2cl}
The function $\dd$ has the following properties: 
\begin{itemize}
\item $\dd(\emptyset) = 0$.
\item For all $X,Y \subsetfin A$, if $X \subset Y$ then $\dd(X) \leq \dd(Y)$.
\item For all $X,Y,Z \subsetfin A$, if $\dd(XY) = \dd(Y)$ then $\dd(XYZ) = \dd(YZ)$.
\footnote{Equivalently, for all $Y,Z \subsetfin A$ and all $x \in A$, if $\dd(xY) = \dd(Y)$ then $\dd(xYZ) = \dd(YZ)$.}
\end{itemize}
It follows that associated to $\dd$ we have a closure operator with finite character $\cl_{\dd}$ on $A$ which restricts to a pregeometry on the set $A_1 := \{ x \in A : \dd(x) \leq 1 \}$ with dimension function $\dd$. Indeed, the operator $\cl_{\dd}$ is given by: for $X \subsetfin A$ and $x_0 \in A$, 
\begin{align*}
     &x_0 \in \cl_{\dd}(X)\\
\iff &\dd(x_0/X) = 0\\
\iff &\dd(x_0 X) = \dd(X)\\
\iff &\text{ There exists a tuple } x \supset x_0 \text{ such that } \delta(x/\scl(X)) = 0\\
\iff &x_0 \in \scl(X) \text{ or } \text{there exists a tuple } x \supset x_0 \text{, $\cl_0$-independent over $\scl(X)$,}\\ &\text{such that } \delta(x/\scl(X)) = 0.   
\end{align*}
\end{Rem}

The following two definitions will be needed to understand $\cl_{\exp}$ in the proof of Lemma~\ref{CCPexp}. 

\begin{Def} \label{def-ex-rotund}
Let $n \geq 1$. A subvariety $W$ of $\C^n \times (\C^*)^n$ is said to be \emph{$\ex$-rotund} if for every $k \times n$-matrix $M$ with entries in $\Z$ of rank $k$, $\dim W' \geq k$, where $W'$ is the image of $W$ under the map from $\C^n \times (\C^*)^n$ to $\C^k \times (\C^*)^k$ given by $(x,y) \mapsto (M \cdot x, y^M)$
\end{Def}

\begin{Def} \label{def-ex-genreal}
Let $W \subset \C^n \times (\C^*)^n$ be an $\ex$-rotund variety and let $B \subset \C$ be such that $W$ is defined over $B \cup \exp(B)$. Let us say that $a \in \C^n$ is a \emph{generic realisation} of $W$ over $B$, if $(a,\exp(a))$ is a generic point of $W$ over $B \cup \exp(B)$.
\end{Def}

Also, consider the following definition:

\begin{Def}
A pregeometry $\cl$ on a set $A$ is said to have the \emph{Countable Closure Property (CCP)} if for every finite subset $X$ of $A$, the set $\cl(X)$ is countable.
\end{Def}

Let us remark the simple fact that if a pregeometry $\cl$ has the CCP, then the \emph{localisation} $\cl_D$ of $\cl$ over a countable set $D$, i.e. the pregeometry defined by the formula $\cl_D(X) = \cl(D \cup X)$, also has the CCP.

We can now give the statement and proof of Lemma~5.12 from \cite{ZPseudoExp}.

\begin{Lemm} \label{CCPexp}
Assume SC holds. Then the pregeometry $\cl_{\exp}$ has the CCP.
\end{Lemm}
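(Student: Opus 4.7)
Let $X \subset \C$ be finite; I aim to show $\cl_{\exp}(X)$ is countable. The plan is to unravel the characterisation of $\cl_{\dd}$ from Remark~\ref{pred2cl}, applied to $\delta_{\exp}$, so as to reduce the problem to a question about generic realisations of $\ex$-rotund varieties, and then to use Ax's theorem to bound the relevant analytic sets. Concretely, every element of $\cl_{\exp}(X)$ either lies in $D := \scl(X)$ (which is countable) or is a coordinate of some tuple $a = (a_1, \ldots, a_n) \in \C^n$ whose components are $\Q$-linearly independent over $D$ and satisfy $\delta_{\exp}(a/D) = 0$. This latter condition, combined with the independence assumption, is equivalent to $(a, \exp a)$ being a generic point over $D \cup \exp(D)$ of an irreducible algebraic subvariety $W \subset \C^n \times (\C^*)^n$ of dimension exactly $n$, defined over $D \cup \exp(D)$. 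Applying SC to the tuples $Ma$ for arbitrary $k \times n$ integer matrices $M$ of rank $k$ further forces $W$ to be $\ex$-rotund (Definition~\ref{def-ex-rotund}), so each such $a$ is a generic realisation (Definition~\ref{def-ex-genreal}) of some $\ex$-rotund, critically-dimensional variety over $D$.

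Since $D \cup \exp(D)$ is countable, so is the collection of such $W$ (ranging over all $n$). The task thus reduces to showing, for each such $W$, that the set of generic realisations of $W$ over $D$ is countable. These all lie in the closed complex analytic set $V_W := \{ x \in \C^n : (x, \exp x) \in W \}$, which has at most countably many irreducible analytic components, so it suffices to prove that no positive-dimensional component of $V_W$ contains a generic realisation of $W$ over $D$. Suppose for contradiction that $V_0 \subset V_W$ is such a component, of complex dimension $d \geq 1$. I would let $L = \{(q_1, \ldots, q_n) \in \Q^n : \sum q_i x_i \text{ is constant on } V_0\}$, set $k = \dim_\Q L$, $r = n - k$, and split into cases. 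When $k = 0$, Ax's theorem \cite{Ax71}, applied to the $n$ coordinate functions on the smooth locus of $V_0$, will yield $\dim \widetilde W \geq n + d > n$, where $\widetilde W \subset W$ is the Zariski closure of $\{(x, \exp x) : x \in V_0\}$, contradicting $\dim W = n$. When $k \geq 1$, a $\GL_n(\Z)$ change of coordinates $(x, y) \mapsto (Mx, y^M)$ (which preserves $\dim W$ and $\ex$-rotundity) reduces to the case where $x_1, \ldots, x_r$ are $\Q$-linearly independent modulo constants on $V_0$ while $x_{r+1}, \ldots, x_n$ are constants; applying Ax to $x_1, \ldots, x_r$ then gives a lower bound $r + d$ on the dimension of $\pi_1(\widetilde W)$, which I plan to combine with $\ex$-rotundity of $W$ applied to the matrix $(0 \mid I_k)$ and a fibre-dimension analysis of $\pi_2 : W \to \pi_2(W)$ at the specific point of $\pi_2(W)$ determined by $V_0$ to force $d \leq 0$.

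The hardest step will be the fibre-dimension estimate in the case $k \geq 1$: $\ex$-rotundity guarantees $\dim \pi_2(W) \geq k$, so that the generic fibre of $\pi_2$ has dimension at most $r$, but the fibre relevant to $V_0$ sits over a specific point which is not a priori generic in $\pi_2(W)$. Managing this will likely require either an additional application of Ax's theorem to tighten control on the constants arising from the $x_{r+i}$, or an alternative argument ensuring that $\ex$-rotundity controls not only the generic fibre but also the specific fibres relevant here. The remainder of the argument is a direct combination of countability of the parameter set $D \cup \exp(D)$ with the standard fact that closed complex analytic subsets of $\C^n$ have at most countably many irreducible components.
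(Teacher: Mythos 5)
Your overall strategy is the paper's: reduce the countability of $\cl_{\exp}(X)$ to counting generic realisations of $\ex$-rotund varieties $W$ of dimension $n$ over a countable strong set, and then use Ax's theorem to show that no positive-dimensional piece of $\{x : (x,\exp x)\in W\}$ can contain such a realisation. (The paper works locally, with a one-dimensional analytic disc through the putative non-isolated generic realisation, rather than with a whole irreducible component; this only changes how many derivations you feed to Ax, not the substance.) The reduction steps and the case $k=0$ are fine.

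The step you flag as unresolved — the fibre-dimension estimate in the case $k\geq 1$ — is a genuine gap as written, but it closes immediately and needs neither a second application of Ax nor any strengthening of $\ex$-rotundity. The point you are missing is that you are not working with an arbitrary positive-dimensional component $V_0$: by hypothesis $V_0$ contains a generic realisation $a$ of $W$ over $D$. After your $\GL_n(\Z)$ change of coordinates, the constants $c_{r+1},\dots,c_n$ of $x_{r+1},\dots,x_n$ on $V_0$ are exactly the coordinates $a_{r+1},\dots,a_n$ of $a$. Hence the ``specific point'' $(c,\exp c)$ of $W':=$ image of $W$ under $(x,y)\mapsto((0\mid I_k)x,\,y^{(0\mid I_k)})$ is the image of the generic point $(a,\exp a)$ of $W$ under a dominant map $W\to W'$, and is therefore itself generic in $W'$ over $D\cup\exp(D)$. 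Consequently $\trd(c,\exp c/D\cup\exp(D))=\dim W'\geq k$ by $\ex$-rotundity, and the fibre of $W\to W'$ you care about is a generic fibre, of dimension $\dim W-\dim W'\leq n-k$. Combining this with Ax's bound $\trd(x_{\leq r},y_{\leq r}/\mathcal C)\geq r+d$ gives $\dim W\geq \trd(x,y/D\cup\exp(D))\geq (r+d)+k=n+d>n$, the desired contradiction (this is precisely the computation the paper performs, phrased there as $\trd(x_{>k},y_{>k}/\mathcal C)\leq n-k$ contradicting Ax). So: insert the observation that $V_0$ contains a generic realisation and that genericity is preserved under the coordinate projection, and your proof is complete.
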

\begin{proof}
Let $B$ be a finite subset of $\C$. We shall prove that $\cl_{\exp}(B)$ is countable. By passing to its strong closure, we may assume that $B$ is strong with respect to $\delta_{\exp}$. Note that for any element $x_0 \in \C$, $x_0$ is in $\cl_{\exp}(B)$ if and only if $x_0 \in \spanQ(B)$ or there exists $x \supset x_0$, $\Q$-linearly independent over $B$, such that $\delta_{\exp}(x/B) = 0$. 

It is clear that $\spanQ(B)$ is countable, it therefore suffices to show that the set
\[
\{ x \subset \C : x \text{ is $\Q$-linearly independent over $B$ and } \delta_{\exp}(x/B) = 0\}
\] 
is also countable. 

Suppose $x$ is $\Q$-linearly independent over $B$ and let $W$ be the algebraic locus of $(x,\exp(x))$ over $B \cup \exp(B)$. Since $B$ is strong, the variety $W$ is $\ex$-rotund and, clearly, $x$ is a generic realisation of $W$ over $B$. Also, note that $\delta_{\exp}(x/B) = 0$ if and only if $\dim W = n$. Thus, it is sufficient to prove that for every $n$ and for every $\ex$-rotund variety $W \subset \C^{2n}$ defined over $B \cup \exp(B)$ of dimension $n$, the set of generic realisations of $W$ over $B$ is countable (clearly, there are only countably many such varieties $W$.) This is done below.

Let $W \subset \C^{2n}$ be an $\ex$-rotund variety defined over $B \cup \exp(B)$ of dimension $n$.

The proof of the following claim completes the proof of the lemma. 

\noindent\textbf{Claim:} Consider the (analytic) set
\[
\SW = \{ x \in \C^n : (x,\exp x) \in W \}.
\]
There is an analytic set $\SW_0$ of dimension zero contained in $\SW$ such that every generic realisation of $W$ over $B$ either is in $\SW_0$ or is an isolated point of $\SW$.\footnote{An \emph{analytic subset} of a domain $U$ in  $\C^n$ is a set that locally, around every point in $U$, is defined as the zero set of some complex analytic functions. We call analytic subsets of $\C^n$ simply \emph{analytic sets}. For precise definitions see \cite[Section 2.1]{Chirka})}

Indeed, the claim implies that the set of generic realisations of $W$ over $B$ is countable: Since $\SW_0$ is an analytic set of dimension zero, it consists of isolated points, it is therefore discrete and hence countable (for every discrete subset of Euclidean space is countable). Also, the set of isolated points of $\SW$ is clearly discrete and hence countable.

\noindent\textbf{Proof of Claim:}
Being analytic, the set $\SW$ can be written as a union $\bigcup_{0 \leq i \leq d} \SW_i$ where, for each $i$, the set $\SW_i$ is a complex manifold of dimension $d$ (possibly empty) and the union $\bigcup_{0 \leq j \leq i} \SW_j$ is an analytic set (\cite[Section 5.5]{Chirka}). In particular, the set $\SW_0$ is an analytic set of dimension 0.

Let us now show that any generic realisation of $W$ over $B$ in $\SW \setminus \SW_0$ is an isolated point of $\SW$.

Suppose not. Then there exists a generic realisation $a$ of $W$ over $B$ in $\SW \setminus \SW_0$ that is not an isolated point of the analytic set $\SW$. 

Since $a$ is in some $\SW_i$ with $i > 0$, there exists an analytic isomorphism $x:t \mapsto x(t)$ from an open disc $D$ around $0$ in $\C$ onto a subset of $\SW$ mapping $0$ to $a$. 

Set $y(t):=\exp(x(t))$. Then for every $t \in D$, $(x(t),y(t))$ is in $W$.

We can consider (the germ of) each coordinate function of $x$ and $y$ as an element of the differential ring $\mathcal{R}$ of germs near $0$ of functions which are analytic on a neighbourhood of $0$.
\footnote{The equivalence relation defining the germs is given by: $f \sim g$, if $f$ and $g$ coincide on a punctured neighbourhood of 0.}
Note that the ring of constants of $\mathcal{R}$ is (isomorphic to) $\C$. 
Using the fact that the zero set of an analytic function in one variable consists of isolated points, it is easy to see that $\mathcal{R}$ is an integral domain. Thus, $\mathcal{R}$ embeds into its field of fractions, $\mathcal{F}$. The derivation on $\mathcal{R}$ extends to a derivation on $\mathcal{F}$ (by the usual differentiation rule) with field of constants $\mathcal{C} \supset \C$.
  
Since $(x,y) \in W(\mathcal F)$, we get that
\begin{equation*}
\trd(x, y/B \cup \exp(B)) \leq n.
\end{equation*}
In fact, since $x(0)=a$ is a generic realisation over $B$, $\trd(x(0),y(0)/B \cup \exp(B)) = n$, and hence
\[
\trd(x, y/B \cup \exp(B)) = n.
\]

Let $k \in \{0,\dots,n\}$ be the number of independent $\Q$-linear dependences among $Dx_1,\dots,Dx_n$, i.e. $k = n - \ld_{\Q}(Dx_1,\dots,Dx_n)$. After a $\Q$-linear change of coordinates we can assume that $Dx_1,\dots,Dx_k$ are all identically zero and $Dx_{k+1},\dots,Dx_n$ are $\Q$-linearly independent. Thus, $x_1,\dots,x_k$ are all constant, with values $a_1,\dots,a_k$ respectively. Since $W$ is $\ex$-rotund, we have
\begin{equation*}
\trd(a_1,\dots,a_k,\exp(a_1),\dots,\exp(a_k)/B \cup \exp(B)) \geq k.
\end{equation*}

Hence
\begin{align*}
&\trd(x_{k+1},\dots,x_n,y_{k+1},\dots,y_n/ \mathcal{C})\\
\leq &\trd(x_{k+1},\dots,x_n,y_{k+1},\dots,y_n/ \C)\\
\leq &\trd(x_{k+1},\dots,x_n,y_{k+1},\dots,y_n/B \cup \{a_1,\dots,a_k\} \cup \exp(B \cup \{a_1,\dots,a_k\}))\\
= &\trd(x_{k+1},\dots,x_n,y_1,\dots,y_n/B \cup \exp(B))\\
&-\trd(x_1,\dots,x_k,y_1,\dots,y_k/B \cup \exp(B))\\
\leq &n-k.
\end{align*}

Ax's Theorem (\cite[Statement (SD)]{Ax71}) then implies that $Dx_{k+1},\dots,Dx_n$ must be $\Q$-linearly dependent. This gives a contradiction.
\end{proof}


\begin{Rem} \label{rem-kirby}
The only use of the Schanuel Conjecture in Lemma~\ref{CCPexp} is in the assertion that $\cl_{\exp}$ is a pregeometry. By results of Kirby (Theorem 1.1 and Theorem 1.2 in \cite{KirbyExpAlg}), without assuming the Schanuel conjecture, there is a countable strong subset $D$ of $\C$ with respect to the predimension function $\delta_{\exp}$. Thus, unconditionally, for such $D$, the localisation $(\cl_{\exp})_D$ of $\cl_{\exp}$ is a pregeometry and, by precisely the same argument as in the proof of Lemma~\ref{CCPexp}, has the CCP.
\end{Rem}

\subsubsection{Raising to powers}

Let $K$ be a subfield of $\C$. The structure $\C_K$ of \emph{raising to powers in $K$} is the following two-sorted structure:
\[
(\C,+,(\lambda\cdot)_{\lambda \in K}) \xrightarrow{\exp} (\C,+,\cdot),
\]
where the structure on the first-sort is the natural $K$-vector space structure, the structure on the second sort is the usual field structure and $\exp$ is the complex exponential function.

A model-theoretic study of the above structures, in analogy with the case of $\C_{\exp}$, has been done by Zilber in \cite{ZPowers}, with additions in \cite{ZPowers2} and \cite{ZCIT}. As in the previous subsection, we are interested in a CCP result and give only a brief account of the necessary material.

Consider the predimension function $\delta_K$ defined on tuples $x \subset \C$ by
\[
\delta_K(x) := \ld_K(x) + \trd(\exp(x)) - \ld_{\Q}(x).
\]

Assume $K$ has finite transcendence degree. Then, the Schanuel Conjecture implies that $\delta_K(x) \geq - \trd(K)$ for all $x$. Indeed, $\ld_K (x) \geq \trd (x/K) \geq \trd(x) - \trd(K)$, therefore:
\begin{align*}
\delta_K(x) &= \ld_K(x) + \trd(\exp(x)) - \ld_{\Q}(x)\\
            &\geq \trd(x) -\trd(K) + \trd(\exp(x)) - \ld_{\Q}(x)\\
            &\geq -\trd(K)
\end{align*}
where the last inequality follows from the Schanuel conjecture.  

Thus, SC implies the following conjecture:

\begin{Conj}[Schanuel Conjecture for raising to powers in $K$ (SC$_K$)]
Let $K$ be a subfield of $\C$ of finite transcendence degree. Then, for all $x \subset \C$,  
\[
\delta_K(x) \geq - \trd(K).
\]
\end{Conj}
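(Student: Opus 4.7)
The plan is to derive SC$_K$ as a direct consequence of the full Schanuel Conjecture; since SC$_K$ is itself stated as a conjecture, a conditional derivation from SC is the natural target, and indeed the paper motivates the conjecture with precisely this implication. The key mechanism is that moving from $\ld_\Q$ to $\ld_K$ only ``costs'' $\trd(K)$, via elementary inequalities between linear dimension and transcendence degree.

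Given a tuple $x \subset \C$, the first step is the elementary bound $\ld_K(x) \geq \trd(x/K)$: a $K$-basis of the $K$-linear span of $x$ generates $x$ algebraically over $K$, so it contains a transcendence basis of $x$ over $K$. Combining this with the additivity inequality $\trd(x/K) \geq \trd(x) - \trd(K)$ (which itself follows from $\trd(xK) = \trd(K) + \trd(x/K) \geq \trd(x)$) gives
\[
\delta_K(x) = \ld_K(x) + \trd(\exp x) - \ld_\Q(x) \geq \trd(x) + \trd(\exp x) - \ld_\Q(x) - \trd(K).
\]
Subadditivity of transcendence degree then yields $\trd(x) + \trd(\exp x) \geq \trd(x,\exp x)$, and SC applied to $x$ (after reducing to a maximal $\Q$-linearly independent subtuple, which preserves both $\trd(x,\exp x)$ and $\ld_\Q(x)$) gives $\trd(x,\exp x) \geq \ld_\Q(x)$. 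Chaining these inequalities produces $\delta_K(x) \geq -\trd(K)$, as required.

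The main obstacle is the appeal to SC, which remains open; every other step is formal bookkeeping with transcendence-degree inequalities. An unconditional proof of SC$_K$ seems to require genuinely new transcendence results, and as the introduction notes, such unconditional results are currently available only in generic cases via the Bays--Kirby--Wilkie theorem. Otherwise, SC$_K$ must be taken as a conjectural hypothesis, exactly as is done in Sections~\ref{section:green-model}~and~\ref{section:elliptic-model}.
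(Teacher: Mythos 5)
Your derivation of SC$_K$ from the full Schanuel Conjecture is correct and is essentially identical to the paper's own argument, which uses the same chain $\ld_K(x) \geq \trd(x/K) \geq \trd(x) - \trd(K)$ followed by SC to bound $\trd(x) + \trd(\exp x) - \ld_\Q(x)$ from below by $0$. Your extra remarks (reducing to a maximal $\Q$-linearly independent subtuple, and the unconditional generic case via Bays--Kirby--Wilkie) are consistent with what the paper says.
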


The following theorem shows that a stronger version of the Schanuel Conjecture for raising to powers in $K$ is satisfied in the case where $K$ is generated by powers that are \emph{exponentially algebraically independent}. This result is due to Bays, Kirby and Wilkie; in the form below, it follows easily from their Theorem 1.3 in \cite{BKW}. 

\begin{Thm}[Strong Schanuel Condition for $K$ (SC$_K^*$)] \label{sck-wilkie}
Suppose $K = \Q(\lambda)$ where $\lambda$ is an exponentially algebraically independent tuple of complex numbers. Then for all $x \subset \C$, 
\[
\delta_K(x) \geq 0.
\]
\end{Thm}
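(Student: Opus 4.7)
The plan is to deduce this directly from Theorem~1.3 of \cite{BKW}, an unconditional Schanuel-type inequality for tuples extending an exponentially algebraically independent one. Applied to $\lambda$ together with an arbitrary complex tuple $y$, it yields
\[
\trd(\lambda, \exp(\lambda), y, \exp(y)) \;\geq\; d + \ld_{\Q}(\lambda, y),
\]
where $d = |\lambda| = \trd(K)$; the $+d$ beyond the na\"ive Schanuel bound reflects that $\delta_{\exp}$ attains its maximum $d$ on $\lambda$ and is preserved upon extension. In particular, the same hypothesis forces $\trd(\lambda, \exp(\lambda)) = 2d$ and $\trd(\exp(\mu)) = \ld_{\Q}(\mu)$ for every $\Q$-linearly independent sub-tuple $\mu$ of $\spanQ(\lambda)$.

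I would first reduce to the case where $x$ is $\Q$-linearly independent, by passing to a $\Q$-basis of $\spanQ(x)$ among its entries; this preserves $\ld_{\Q}(x)$, $\ld_K(x)$, and $\trd(\exp(x))$, because $\exp$ of any $\Q$-linear combination is, after clearing denominators, a root of a monomial in $\exp$ of the basis, hence algebraic over the field these generate. Set $n := \ld_{\Q}(x)$ and split $x = x_1 \sqcup x_2$, where $x_1$ is a $\Q$-basis of the overlap $\spanQ(\lambda) \cap \spanQ(x)$ (of size $t$) and $x_2$ is a basis of a $\Q$-complement of the overlap inside $\spanQ(x)$ (of size $n - t$).

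Apply BKW to $(\lambda, x_2)$, which has trivial $\Q$-linear overlap, obtaining $\trd(\lambda, \exp(\lambda), x_2, \exp(x_2)) \geq 2d + (n-t)$; combined with $\trd(\lambda, \exp(\lambda)) = 2d$ this rearranges to
\[
\trd(x_2, \exp(x_2) / K(\exp(\lambda))) \;\geq\; n - t.
\]
Since $x_1 \subset K$, one has $\trd(x/K) = \trd(x_2/K)$, and since $\trd(\exp(x_1)) = t$ the chain rule gives $\trd(\exp(x)) = t + \trd(\exp(x_2)/\exp(x_1))$. Because $\exp(x_1) \subset \overline{K(\exp(\lambda))}$, standard sub-additivity and base-enlargement of transcendence degree yield
\[
\trd(x_2/K) + \trd(\exp(x_2)/\exp(x_1)) \;\geq\; \trd(x_2, \exp(x_2)/K(\exp(x_1))) \;\geq\; n - t.
\]
Adding $t$ to both sides gives $\trd(x/K) + \trd(\exp(x)) \geq n$, and finally $\ld_K(x) + \trd(\exp(x)) \geq \trd(x/K) + \trd(\exp(x)) \geq n$, using that a $K$-linearly dependent tuple is algebraically dependent over $K$. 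This is precisely $\delta_K(x) \geq 0$.

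The main obstacle is of course Theorem~1.3 of \cite{BKW} itself, which rests on substantial real-analytic and o-minimal machinery (building on Wilkie's work on $\R_{\exp}$); given that theorem, the derivation above is routine transcendence-degree bookkeeping, the only mildly delicate step being the preliminary decomposition $x = x_1 \sqcup x_2$ used to separate the $\Q$-linear overlap with $\spanQ(\lambda)$.
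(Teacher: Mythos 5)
Your derivation starts from a misstatement of the input theorem, and the misstatement is exactly where the difficulty is hidden. The inequality you attribute to Bays--Kirby--Wilkie,
\[
\trd(\lambda, \exp(\lambda), y, \exp(y)) \;\geq\; d + \ld_{\Q}(\lambda, y) \quad\text{for arbitrary } y \subset \C,
\]
i.e.\ $\delta_{\exp}(\lambda, y) \geq d$ over the empty base, is not their Theorem~1.3 and cannot be an unconditional theorem: since $\trd(\lambda,\exp(\lambda)) \leq 2d$ and $\ld_{\Q}(\lambda,y) \geq \ld_{\Q}(y)$, it would yield $\trd(y,\exp(y)) \geq \ld_{\Q}(y) - d$ for \emph{every} tuple $y$, which with $d=1$ is Schanuel's conjecture up to a uniformly bounded defect --- wide open. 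What exponential algebraic independence of $\lambda$ gives you unconditionally (via Kirby's theorem, cf.\ Remark~\ref{rem-kirby}) is only the localised inequality $\delta_{\exp}(\lambda, y/D) \geq d$ over a countable strong set $D$. That localised form does justify your two side claims ($\trd(\lambda,\exp(\lambda)) = 2d$ and $\trd(\exp(\mu)) = \ld_{\Q}(\mu)$ for $\Q$-linearly independent $\mu \subset \spanQ(\lambda)$), but it does not descend to the empty base, and running your whole argument over $D$ would only prove $\delta_K(x/D) \geq 0$, not the absolute statement $\delta_K(x) \geq 0$ asserted by the theorem. For the same reason the intermediate step $\trd(x_2,\exp(x_2)/K(\exp(\lambda))) \geq n-t$, which counts the transcendence degree of $x_2$ jointly with that of $\exp(x_2)$, is itself a relative full-exponentiation Schanuel statement that is not available.

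The theorem BKW actually prove concerns the raising-to-powers reduct: its conclusion involves only $\ld_K(x)$, $\ld_{\Q}(x)$ and the transcendence degree of $\exp(x)$ (with $\lambda$ possibly adjoined), and the term $\trd(x)$ never appears --- this is precisely why the o-minimality of $\R_{\exp}$ suffices to prove it unconditionally. Given their Theorem~1.3 in that form, the statement here follows by your (correct) preliminary reduction to $\Q$-linearly independent tuples, which is all that the paper's ``follows easily'' refers to; the decomposition $x = x_1 \sqcup x_2$ and the bookkeeping built on it are then unnecessary.
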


For the definition of exponential algebraic independence we refer to \cite{BKW}; for our purposes it suffices to know the following: exponential algebraic independence implies algebraic independence, and, if $\beta$ is a real number which is \emph{generic} in the o-minimal structure $\R_{\exp}$ (i.e. which is not in $\dcl_{\R_{\exp}}(\emptyset)$) then $\beta$ is exponentially transcendental (i.e. the singleton $\{\beta\}$ is exponentially algebraically independent). In particular, these two facts imply that if $\beta \in \R$ is generic in $\R_{\exp}$, then the complex number $\beta i$ is exponentially transcendental.


Assume SC$_K$. Then the values of the submodular predimension function $\delta_K$ are bounded from below in $\Z$. Therefore there exists a smallest strong set for $\delta_K$, namely the strong closure of the empty set. By localising $\delta_K$ over this set we obtain a non-negative predimension function. Let us denote by $\dd_K$ the associated dimension function and by $\cl_K$ the corresponding pregeometry (without explicit mention of the localisation).

\begin{Def}
A subset $L$ of $\C^n$ defined by an equation of the form
\[
M \cdot x = c,
\]
where $M$ is a $k \times n$-matrix with entries in $K$ and $c \in \C^n$, is said to be a \emph{$K$-affine subspace} of $\C^n$. If $C\subset \C$ contains all the coordinates of $c$, then we say that $L$ is defined over $C$. Note that if the matrix $M$ has rank $r$ over $K$, then the dimension of $L$, denoted $\dim L$, is $n-r$.
\end{Def}

In analogy with Definition~\ref{def-rotund}, in the case of green points, and Definition~\ref{def-ex-rotund}, in the case of exponentiation, we have the following definition, which will be essential in our arguments in Subsection~\ref{subsection:proof2}.

\begin{Def} \label{def-K-rotund}
A pair $(L,W)$ of a $K$-affine subspace $L$ of $\C^n$ and a subvariety $W$ of $(\C^*)^n$ is said to be \emph{$K$-rotund} if for any $k \times n$-matrix $m$ with entries in $\Z$ of rank $k$ we have
\[
\dim m \cdot L + \dim W^m \geq k.
\]
\end{Def}



Minor modifications of the proof of the CCP for $\C_{\exp}$ yield a proof of the CCP in the powers case under the assumption that the SC$_K$ holds. Thus, we have: 

\begin{Lemm} \label{CCP-K}
Assume SC$_K$. Then the pregeometry $\cl_K$ on $\C$ has the CCP.
\end{Lemm}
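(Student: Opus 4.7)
My plan is to adapt the proof of Lemma~\ref{CCPexp} essentially verbatim, replacing $\ex$-rotund subvarieties $W \subset \C^{2n}$ by $K$-rotund pairs $(L,W)$ and taking all transcendence degrees over a base that contains $K$. Fix a finite $B \subset \C$; by passing to its (countable) strong closure with respect to $\delta_K$ we may assume $B$ is strong. By the alternative characterisation of $\cl_K$ (cf.\ Remark~\ref{pred2cl}), it suffices to show that the set of tuples $x$ that are $\Q$-linearly independent over $B$ and satisfy $\delta_K(x/B)=0$ is countable. For such an $x$ of length $n$, let $L$ be the smallest $K$-affine subspace of $\C^n$ containing $x$ and defined over $K\cdot B$, and let $W$ be the algebraic locus of $\exp x$ over $\exp B$; then $\dim L + \dim W = n$, and strongness of $B$ forces $(L,W)$ to be $K$-rotund (apply $\delta_K \geq 0$ to $mx$ for each integer $k\times n$ matrix $m$ of rank $k$; note $\ld_\Q(mx/B) = k$ since $x$ is $\Q$-linearly independent over $B$). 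As there are only countably many such pairs defined over $K(B)\cup \exp B$, it suffices to show: for each such pair, the set of $a \in L$ for which $(a,\exp a)$ is generic in $L\times W$ over $K(B\cup \exp B)$ is countable.

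Since $K\subset \C$, the $K$-linear equations defining $L$ are also $\C$-linear, so $L$ is a complex affine subspace, and hence $\SW := \{x \in L : \exp x \in W\}$ is a complex analytic subset of $\C^n$. Decompose $\SW = \bigcup_{0\leq i \leq d}\SW_i$ into complex manifolds of pure dimension $i$ exactly as in Lemma~\ref{CCPexp}. Since $\SW_0$ and the isolated points of $\SW$ are countable, it remains to rule out a generic realisation $a$ lying in some $\SW_i$ with $i>0$ that is not isolated in $\SW$.

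Suppose such $a$ exists; take a non-constant analytic parametrisation $x: D \to \SW$ with $x(0)=a$, and set $y = \exp x$. Work in the differential field $\mathcal{F}$ of germs near $0$ with constants $\mathcal{C} \supset \C$; then $(x,y)$ realises $L \times W$ over $K(B\cup \exp B)$, and genericity at $t=0$ gives $\trd_{K(B\cup \exp B)}(x,y) = n$. Let $k := n - \ld_\Q(Dx)$ and apply a $\Q$-linear change of coordinates, which preserves the $K$-affineness of $L$ because $\Q \subset K$, so that $x_1,\dots,x_k$ are constants $a_1,\dots,a_k$ and $Dx_{k+1},\dots,Dx_n$ are $\Q$-linearly independent. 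Ax's theorem then gives $\trd_{\mathcal{C}}(x_{>k}, y_{>k}) \geq n-k+1$. For the opposite inequality, since $(a,\exp a)$ is generic in $L \times W$ over $K(B\cup \exp B)$ and $\pi_k = [I_k \mid 0]$ is an integer matrix of rank $k$, the image $(a_{\leq k}, \exp a_{\leq k})$ is generic in $\pi_k L \times W^{\pi_k}$ over $K(B\cup \exp B)$, hence
\[
\trd_{K(B\cup \exp B)}(a_{\leq k}, \exp a_{\leq k}) = \dim(\pi_k L) + \dim(W^{\pi_k}) \geq k
\]
by $K$-rotundness of $(L,W)$. Combining,
\[
\trd_{\mathcal{C}}(x_{>k}, y_{>k}) \leq \trd_{K(B\cup\exp B\cup\{a_{\leq k},\exp a_{\leq k}\})}(x_{>k}, y_{>k}) = n - \trd_{K(B\cup \exp B)}(a_{\leq k}, \exp a_{\leq k}) \leq n-k,
\]
contradicting the Ax bound.

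The only substantive modification compared to the $\exp$ case, and the point that takes a moment to set up, is working over the base $K(B\cup \exp B)$ rather than $\Q(B\cup \exp B)$: this is what ensures that $L \times W$ is defined over the base field and that the dimension inequality supplied by $K$-rotundness translates directly into a transcendence-degree bound compatible with Ax. The analytic stratification and the $\Q$-linear reduction for Ax's theorem (which uses only $\Q$-linear independence) transfer without change.
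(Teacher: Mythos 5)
Your overall strategy is the right one --- the paper itself only says that ``minor modifications'' of the proof of Lemma~\ref{CCPexp} are needed, and replacing $\ex$-rotund varieties by $K$-rotund pairs $(L,W)$ is exactly the intended modification. Your derivation of the $K$-rotundity of $(L,W)$ from strongness of $B$, the analytic stratification, and the application of Ax's theorem are all correct.

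There is, however, a genuine gap in the reduction step. You claim it suffices to count, for each pair $(L,W)$, the tuples $a$ with $(a,\exp a)$ \emph{Zariski-generic in $L\times W$}, i.e.\ with $\trd(a,\exp a/K(B\cup\exp B))=n$. But a tuple $x$ with $\delta_K(x/B)=0$ need not have this property: that condition only gives $\ld_K(x/B)=\dim L$ and $\trd(\exp x/\exp B)=\dim W$, and since $\ld_K(x/B)\geq \trd(x/K(B\cup\exp B),\exp x)$ --- the inequality goes the wrong way --- one cannot conclude that $\trd(x,\exp x/K(B\cup\exp B))=\dim L+\dim W$. (Under the full Schanuel Conjecture one could, via $\delta_{\exp}(x/B)\geq 0$; but the lemma assumes only SC$_K$, and as Remark~\ref{rem-kirby-K} shows, $\delta_K\geq\delta_{\exp}$, so strongness for $\delta_K$ is strictly weaker.) Your count may therefore miss uncountably many of the tuples to be controlled, and the two places where you invoke Zariski genericity --- the equality $\trd_{K(B\cup\exp B)}(x,y)=n$ in the differential field, and the bound $\trd(a_{\leq k},\exp a_{\leq k}/\cdot)\geq k$ --- are not available. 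The fix is local: take ``generic realisation of $(L,W)$'' to mean $\ld_K(a/B)=\dim L$ and $\exp a$ generic in $W$ (which every relevant $x$ does satisfy by construction), and replace the global transcendence-degree computation by a fibre-dimension computation: over the base extended by $a_{\leq k},\exp a_{\leq k}$, the tuple $(x_{>k},y_{>k})$ lies in the product of the fibres of $L\to \pi_k\cdot L$ and $W\to W^{\pi_k}$, whose dimensions are $\dim L-\dim \pi_k\cdot L$ and $\dim W-\dim W^{\pi_k}$ respectively (the latter because $\exp a_{\leq k}$ is generic in $W^{\pi_k}$), so that $\trd_{\mathcal C}(x_{>k},y_{>k})\leq n-(\dim \pi_k\cdot L+\dim W^{\pi_k})\leq n-k$ by $K$-rotundity, contradicting Ax. This is precisely the shape of the computation the paper carries out in the proof of the Main Lemma~\ref{MainLemma}.
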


\begin{Rem} \label{rem-kirby-K}
Notice that if $D$ is a strong subset of $\C$ with respect to $\delta_{\exp}$ containing $K$, then $D$ is also strong with respect to $\delta_K$. This is due to the fact that for every set $D \subset \C$ containing $K$, the inequality $\delta_K(x/D) \geq \delta_{\exp}(x/D)$ holds for all $x \subset \C$. Indeed, this can be seen as follows:
\begin{align*}
\delta_K(x/D) &= \ld_K(x/D) + \trd(\exp(x)/\exp(D)) - \ld_{\Q}(x/D)\\
              &\geq \trd(x/D) + \trd(\exp(x)/\exp(D)) - \ld_{\Q}(x/D)\\
              &\geq \trd(x \exp(x)/D \exp(D)) -  \ld_{\Q}(x/D)\\
              &=\delta_{\exp}(x/D)
\end{align*}
Thus, the result of Kirby mentioned in Remark~\ref{rem-kirby}, which provides a countable strong set $D$ with respect to $\delta_{\exp}$, also gives a countable strong subset of $\C$ with respect to $\delta_K$ for any countable $K$, namely the strong closure of $K$ with respect to $(\delta_{\exp})_D$. 

Also, the proof of the CCP works for proving that for any countable strong set $D$ with respect to $\delta_K$, the localisation $(\cl_K)_D$ is a pregeometry with the CCP.
\end{Rem}



\subsection{Exponentiation and raising to powers on an elliptic curve}
\label{sec:elliptic}

\subsubsection{Basic setting and exponentiation} \label{ssec:elliptic}


Let $\Elg$ be an elliptic curve defined over a subfield $k_0$ of $\C$. Put $E := \Elg(\C) \subset \PP^2(\C)$. The variety $\Elg$ has an algebraic group structure with identity element $[0,1,0]$ 
and is defined by a homogeneous equation of the form:
\[
zy^2 = 4(x - e_1)(x - e_2)(x - e_3),
\]
where $e_1,e_2$ and $e_3$ are distinct complex numbers.

Associated to $\Elg$ there is a lattice $\Lambda = \omega_1 \Z + \omega_2 \Z$ in $\C$ 
and a corresponding Weierstrass function $\wp$, defined for $x$ in $\C \setminus \Lambda$ by
\begin{equation}
\wp(x) := \frac{1}{x^2} + \sum_{\w \in \Lambda\setminus\{0\}} (\frac{1}{(x-\w)^2} -
\frac{1}{\w^2}),
\end{equation}
so that the map $\expE: \C \to E$ given by 
\[
z \mapsto 
\begin{cases} 
[\wp(z): \wp'(z): 1], &\text{ if $z \not\in \Lambda$,}\\ 
O, & \text{ if $z \in \Lambda$,}
\end{cases}
\]
is a group homomorphism from the additive group of $\C$ onto $E$. The map $\expE$ is called the exponential map of $E$.

For all $x \in \C\setminus \Lambda$, $\wp$ satisfies the differential relation
\begin{equation}
(\wp'(x))^2 = 4(\wp(x) - e_1)(\wp(x) - e_2)(\wp(x) - e_3),
\end{equation}
and
\begin{equation} \label{e}
e_1 = \wp(\frac{\w_1}{2}), \quad e_2 = \wp(\frac{\w_2}{2}), \quad e_3 = \wp(\frac{\w_1+\w_2}{2}).
\end{equation}

As in Section~\ref{sec:axioms}, we denote by $\End(\Elg)$ the ring of regular endomorphisms of $\Elg$ and by $k_{\Elg}$ its field of fractions. Also, $E$ is an $\End(\Elg)$-module and we denote by $\ld$ the corresponding linear dimension. Here we identify $\End(\Elg)$ with the subring of $\C$ consisting of all $\alpha \in \C$ such that $\alpha \Lambda \subset \Lambda$. With this convention in place, for all $x \subset \C$ we have $\ld_{\End(\Elg)}(x/\Lambda) = \ld(\expE(x))$.
Finally, the $j$-invariant of $\Elg$ will be denoted by $j(\Elg)$.


Let us also consider the action of complex conjugation on the above setting. 
Throughout, we denote by $z^c$ the complex conjugate of a complex number $z$. 
The lattice $\Lambda^c$ obtained from $\Lambda$ by applying complex conjugation has an associated Weierstrass function $\wp^c$ satisfying the relation $\wp^c(z^c) = (\wp(z))^c$ for all $z\not\in\Lambda$. Let us denote by $\Elg^c$ the corresponding elliptic curve. By \ref{e}, the affine part of $\Elg^c$ is defined by the equation
\begin{equation}
  y^2 = 4(x - e^c_1)(x - e^c_2)(x - e^c_3).
\end{equation}
Also, since $j$ is the value of a rational function on $e_1,e_2,e_3$ (see the proof of \cite[I.4.5]{Silverman2}), $j(\Elg^c) = j(\Elg)^c$.

\subsubsection{The Elliptic Schanuel Conjecture}

The following is the Elliptic Conjecture from \cite{Bertolin}. There it is 
shown to be an instance of more general conjectures of Grothendieck and
Andr\'e. We will refer to it as the \emph{Elliptic Schanuel Conjecture (ESC)}.

Let us start by introducing some conventions from the theory of elliptic integrals. Given an element $y \in E$, an \emph{integral of the first kind} is a preimage of $y$ under the exponential map $\expE$. A \emph{period} of $E$ is an integral of the first kind of the point $O$, i.e. an element of $\Lambda$. \emph{Integrals of the second kind} are more difficult to describe and, although they appear in the statement of the ESC below, we will not need to use their definition. \emph{Quasiperiods} are integrals of the second kind of the point $O$. For complete definitions we refer to Section~I.5 in \cite{Silverman2}.

We assume that the generators $\omega_1$ and $\omega_2$ of the lattice $\Lambda$ of periods satisfy $\Im(\omega_2/\omega_1) > 0$, and let $\eta_1$ and $\eta_2$ be corresponding quasiperiods, so that the Legendre relation $\omega_2 \eta_1 - \omega_1 \eta_2 = 2\pi i$ holds (\cite[I.5.2]{Silverman2}).

In the rest of this subsection, given a tuple $y =(y_1, \dots, y_r)$ of points on the curve $E$, let us denote by $x = (x_1,\dots, x_r)$ and $z = (z_1,\dots, z_r)$ corresponding integrals of the first and the second kind, respectively.  

\begin{Conj}[Elliptic Schanuel Conjecture (ESC)] \label{ESC}
  Let $\Elg^1,\dots,\Elg^n$ be pairwise non-isogenous elliptic curves. For any
  tuples $y^\nu = (y^\nu_1,\dots,y^\nu_{r^\nu})$ of points of $\Elg^\nu$, $\nu=1,\dots,n$, we have:
  \begin{multline} \label{trdineq}
    \trd(j(\Elg^\nu), \omega^\nu_1, \omega^\nu_2, \eta^\nu_1, \eta^\nu_2, y^\nu,
    x^\nu,z^\nu)_\nu\\
    \geq 2 \sum_\nu \ld_{k_{\Elg_\nu}} (x^\nu/\Lambda^\nu) + 4 \sum_\nu (\ld_{\Q} k_{\Elg_\nu})^{-1} -n + 1
  \end{multline}
\end{Conj}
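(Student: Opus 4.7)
The plan is to deduce the ESC from the Grothendieck period conjecture applied to a suitable 1-motive, following the reduction already indicated in Bertolin's paper. First I would associate to the data $(\Elg^\nu, y^\nu)_\nu$ the 1-motive $M = [u \colon \bigoplus_\nu \Z^{r^\nu} \to \bigoplus_\nu \Elg^\nu]$, where $u$ sends the $i$-th generator of the $\nu$-th summand to $y^\nu_i$. The key observation is that the comparison isomorphism between the Betti and de Rham realisations of $M$ has a matrix whose entries generate (modulo the algebraic closure of $k_0$) the same field as the numbers appearing on the left-hand side of \eqref{trdineq}: the periods $\omega^\nu_1, \omega^\nu_2$ and quasi-periods $\eta^\nu_1, \eta^\nu_2$ come from $h^1(\Elg^\nu)$; the elliptic integrals of the first and second kind $x^\nu, z^\nu$ arise as periods of the extension of $h^1(\Elg^\nu)$ determined by the lattice-to-curve map $y^\nu$; and the $j$-invariants $j(\Elg^\nu)$ enter because the de Rham side is defined over $k_0(j(\Elg^1), \dots, j(\Elg^n))$.

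The second step is the motivic dimension computation. Grothendieck's period conjecture would give
\[
\trd(\text{entries of the period matrix of } M) \geq \dim G_M,
\]
where $G_M$ is the motivic Galois group of $M$, so it suffices to show that $\dim G_M$ is at least the right-hand side of \eqref{trdineq}. Since the $\Elg^\nu$ are pairwise non-isogenous and each $k_{\Elg_\nu}$ is either $\Q$ or an imaginary quadratic field, the semisimple part of $G_M$ splits as a product whose $\nu$-th factor is a form of $\mathrm{GL}_2$ in the non-CM case (contributing $4$) and a two-dimensional torus in the CM case (contributing $2$); in both cases this is $4/\ld_\Q k_{\Elg_\nu}$, giving the term $4 \sum_\nu (\ld_\Q k_{\Elg_\nu})^{-1}$. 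The unipotent radical of $G_M$ encodes the extension class of $M$ and, by an Andr\'e-type computation, has generic dimension $2 \sum_\nu \ld_{k_{\Elg_\nu}}(x^\nu/\Lambda^\nu)$. The correction $-n+1$ I would trace to the fact that a single copy of $\Q(1)$ is common to all $\nu$, via the Legendre relations $\omega^\nu_2 \eta^\nu_1 - \omega^\nu_1 \eta^\nu_2 = 2\pi i$, so the naive product of motivic Galois groups is collapsed by $n-1$ dimensions along the Tate direction.

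The principal obstacle is, of course, that Grothendieck's period conjecture is itself deeply open, even for $h^1$ of a single non-CM elliptic curve: its simplest new consequence would be the algebraic independence of $\pi$ and a non-zero elliptic period, which is not currently known. Any proof along these lines is therefore a reduction to a more fundamental conjecture rather than an absolute argument. Unconditionally, one can only recover the functional analogue of \eqref{trdineq}, by working in a differential field of meromorphic germs and invoking the elliptic Ax--Schanuel theorems of Brownawell--Kubota and Bertrand; this yields the inequality when the $y^\nu$ are replaced by non-constant analytic curves on the $\Elg^\nu$, but gives no information at a single numerical specialisation, which is precisely what the ESC as stated demands.
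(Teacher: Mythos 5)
This statement is a \emph{conjecture} in the paper, not a theorem: the authors give no proof of it, and merely cite Bertolin for the fact that it is an instance of the more general period conjectures of Grothendieck and Andr\'e (the only thing actually proved in the paper is the implication ESC $\Rightarrow$ wESC). Your proposal correctly recognises this and sketches precisely that reduction --- the 1-motive $M = [u\colon \bigoplus_\nu \Z^{r^\nu} \to \bigoplus_\nu \Elg^\nu]$, the identification of the period matrix entries with the quantities on the left of \eqref{trdineq}, and the dimension count for the motivic Galois group producing the terms $4\sum_\nu(\ld_\Q k_{\Elg_\nu})^{-1}$, $2\sum_\nu \ld_{k_{\Elg_\nu}}(x^\nu/\Lambda^\nu)$ and the $-n+1$ collapse along the common $\Q(1)$ --- which is exactly the content of the cited source; your closing acknowledgement that this is a reduction to an open conjecture rather than a proof is the correct conclusion.
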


In fact, we do not need to deal directly with the quasiperiods or the
integrals of the second kind for our purposes, for we can use a
consequence of the conjecture that ignores the precise contribution of
these points to the transcendence degree on the left hand side of inequality 
(\ref{trdineq}) by using obvious upper bounds. Let us therefore show
that the above conjecture implies the following simpler statement:

\begin{Conj}[Weak Elliptic Schanuel Conjecture (wESC)] 
\label{wESC} Let $\Elg^1,\dots,\Elg^n$ be pairwise non-isogenous
  elliptic curves. For any tuples $x^\nu \in \C^{r^\nu}$, $k_{\Elg_\nu}$-linearly independent over
  $\Lambda^\nu$, $\nu=1,\dots,n$, we have:
  \begin{equation}
  \trd(j(\Elg^\nu), x^\nu,\expE^\nu(x^\nu))_\nu \geq \sum_\nu r^\nu.
  \end{equation}
\end{Conj}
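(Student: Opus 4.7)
My plan is to deduce the wESC directly from the ESC by taking $y^\nu := \expE^\nu(x^\nu)$, so that the given tuples $x^\nu$ themselves serve as integrals of the first kind of $y^\nu$. With this choice, the hypothesis that $x^\nu$ is $k_{\Elg^\nu}$-linearly independent over $\Lambda^\nu$ gives $\ld_{k_{\Elg^\nu}}(x^\nu/\Lambda^\nu) = r^\nu$, so the ESC inequality becomes
\begin{equation*}
\trd(j(\Elg^\nu), \omega_1^\nu, \omega_2^\nu, \eta_1^\nu, \eta_2^\nu, \expE^\nu(x^\nu), x^\nu, z^\nu)_\nu \geq 2 \sum_\nu r^\nu + 4 \sum_\nu (\ld_\Q k_{\Elg^\nu})^{-1} - n + 1,
\end{equation*}
for any choice of corresponding second-kind integrals $z^\nu$.

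Next I would bound above the transcendence-degree contribution of the periods $\omega^\nu$, quasiperiods $\eta^\nu$ and second-kind integrals $z^\nu$ beyond the ``main'' quantity $\trd(j(\Elg^\nu), x^\nu, \expE^\nu(x^\nu))_\nu$. Each of the $\sum_\nu r^\nu$ entries of $z^\nu$ contributes at most $1$. For the $4n$ periods and quasiperiods, the Legendre relation $\omega_2^\nu \eta_1^\nu - \omega_1^\nu \eta_2^\nu = 2\pi i$ (one per curve, with the single constant $\pi$ shared across all $n$ curves) expresses $\eta_2^\nu$ algebraically in $\omega_1^\nu, \omega_2^\nu, \eta_1^\nu$ and $\pi$, cutting their joint contribution down to at most $3n + 1$. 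For any CM curves among the $\Elg^\nu$, I would additionally invoke the algebraic identity $\omega_2^\nu / \omega_1^\nu \in k_{\Elg^\nu} \subset \overline{\Q(j(\Elg^\nu))}$ and the corresponding CM identity on quasiperiods to save further.

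Summing these upper bounds and subtracting them from the ESC inequality, the difference is exactly $\sum_\nu r^\nu$, which yields the wESC. I expect the main obstacle to be the careful bookkeeping of the extras, especially in the complex-multiplication case, where the savings available from the Legendre relation alone are insufficient and one has to invoke subtler period identities arising from the action of the endomorphism ring; the coefficients $4\sum_\nu (\ld_\Q k_{\Elg^\nu})^{-1} - n + 1$ on the right-hand side of the ESC are calibrated precisely so that these extras are absorbed and the inequality $\trd(j(\Elg^\nu), x^\nu, \expE^\nu(x^\nu))_\nu \geq \sum_\nu r^\nu$ falls out.
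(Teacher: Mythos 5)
Your proposal follows essentially the same route as the paper: set $y^\nu = \expE^\nu(x^\nu)$, apply the ESC, bound $\trd(z^\nu)_\nu$ by $\sum_\nu r^\nu$ and the periods/quasiperiods via the Legendre relation (non-CM curves) together with extra CM-specific identities, and check that the constants cancel to leave exactly $\sum_\nu r^\nu$. The only point you leave unspecified --- the ``subtler period identities'' in the CM case --- is handled in the paper by citing Masser's theorem that $1, \omega_1^\nu, \eta_1^\nu, 2\pi i$ form a $\Q\alg$-basis of the $\Q\alg$-span of $1, \omega_1^\nu, \omega_2^\nu, \eta_1^\nu, \eta_2^\nu, 2\pi i$.
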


\begin{proof}[Proof of ESC (\ref{ESC}) $\Rightarrow$ wESC (\ref{wESC})]
Let $\Elg^1,\dots,\Elg^n$ be pairwise non-isogenous elliptic curves. For $\nu=1,\dots,n$, let $x^\nu \in \C^{r^\nu}$ be $k_{\Elg_\nu}$-linearly independent over $\Lambda^\nu$. Set $y^\nu = \expE^\nu(x^\nu)$. Then, by \ref{ESC},
\begin{multline}
\trd(j(\Elg^\nu), \omega^\nu_1, \omega^\nu_2, \eta^\nu_1, \eta^\nu_2, y^\nu, x^\nu,z^\nu)_\nu\\
\geq 2 \sum_\nu r^\nu + 4 \sum_\nu (\ld_{\Q} k_{\Elg_\nu})^{-1} -n + 1.
\end{multline}
Without loss of generality let us assume that $\Elg^1,\dots,\Elg^l$ have no CM and $\Elg^{l+1},\dots,\Elg^n$ have CM, $0\leq l \leq n$. Then $\sum_\nu (\ld_{\Q} k_{\Elg_\nu})^{-1} = l + \frac{1}{2}(n-l)$.

Thus,
\begin{equation} \label{wESCrj} 
\trd(j(\Elg^\nu), \omega^\nu_1, \omega^\nu_2, \eta^\nu_1, \eta^\nu_2, y^\nu,
x^\nu,z^\nu)_\nu
\geq 2 \sum_\nu r^\nu + 4l + 2(n-l) -n + 1.
\end{equation}

For each $\nu$, the Legendre relation $\omega^\nu_2 \eta^\nu_1 - \omega^\nu_1 \eta^\nu_2 = 2\pi i$ holds. In particular, restricting our attention to $\Elg_1,\dots,\Elg_l$, this gives
\[
\trd(\omega^\nu_1, \omega^\nu_2, \eta^\nu_1, \eta^\nu_2/2\pi i)_{\nu=1,\dots,l} \leq 3l.
\]

In the CM case, hence for $\nu=l+1,\dots,n$, there are further algebraic dependences. Indeed, it is clear that in this case $\omega^\nu_1$ and $\omega^\nu_2$ are $\Q\alg$-linearly dependent and, in fact, by a theorem of Masser (\cite{Masser}[3.1 Theorem III]), $1, \omega^\nu_1, \eta^\nu_1, 2\pi i$ form a $\Q\alg$-linear basis of the $\Q\alg$-linear span of $1, \omega^\nu_1, \omega^\nu_2, \eta^\nu_1, \eta^\nu_2, 2\pi i$. Therefore
\[
\trd(\omega^\nu_1,\omega^\nu_2,\eta^\nu_1,\eta^\nu_2/2 \pi i)_{\nu=l+1,\dots,n} \leq n-l.
\]

Combining the last two inequalities we get
\[
\trd(\omega^\nu_1,\omega^\nu_2,\eta^\nu_1,\eta^\nu_2)_{\nu=1,\dots,n} \leq 3l + (n-l) + 1.
\]
  
Thus, inequality \ref{wESCrj} implies the following:
\[
\trd(j(\Elg^\nu), y^\nu,x^\nu,z^\nu)_\nu \geq 
\big( 2 \sum_\nu r^\nu + 4l + 2(n-l) -n + 1 \big) - \big( 3l + (n-l) + 1 \big).
\]
Therefore
\[
\trd(j(\Elg^\nu), y^\nu,x^\nu)_\nu \geq \sum_\nu r^\nu.
\]
\end{proof}


Consider the case of a single elliptic curve $\Elg$ defined over $k_0 \subset \C$. 
Let $E = \Elg(\C)$.


Let us define a predimension function $\delta_{\expE}$ on $\C$ as follows: for all $x \subset \C$, let 
\[
\delta_{\expE}(x) := \trd(j(\Elg), x,\expE(x)) - \ld_{k_{\Elg}}(x/\Lambda).
\]

The wESC is clearly equivalent to the statement that for all $x \subset \C$, $\delta_{\expE}(x) \geq 0$; which means that if the wESC holds, then the predimension function $\delta_{\expE}$ is non-negative. Thus, assuming the wESC, we have an associated dimension function $\dd_{\expE}$ and corresponding pregeometry $\cl_{\expE}$. Using the same argument as in Zilber's proof of the CCP (\ref{CCPexp}), this time applying the version of Ax's theorem for the Weierstrass $\wp$-functions from \cite{KirAx}, one can see that the pregeometry $\cl_{\expE}$ has the CCP.



\subsubsection{Raising to powers on $\Elg$}

Fix a subfield $K$ of $\C$ extending $k_{\Elg}$. 


The two-sorted structure $\Elg_K$ of \emph{raising to powers in $K$ on $\Elg$} is given by:
\[
(\C,+,(\lambda\cdot)_{\lambda \in K}) \xrightarrow{\expE} (E,(W(\C))_{W \in L_{\Elg}}).
\]
where the first sort has the natural $K$-vector field structure, the second sort has the algebraic structure on $E$, and the map $\expE$ is the exponential map of $E$.


Consider the predimension function $\delta_{E,K}$ defined on tuples $x \subset \C$ by
\[
\delta_{E,K}(x) = \ld_K(x) + \trd(j(\Elg),\wp(x)) - \ld_{k_{\Elg}}(x/\Lambda).
\]

If $K$ has finite transcendence degree, then the Weak Elliptic Schanuel Conjecture (\ref{wESC}) implies that the inequality $\delta_{E,K}(x) \geq - \trd(K)$ holds for all $x \subset \C$. 

Let us state this consequence of the wESC for a single elliptic curve $\Elg$ as an independent conjecture.

\begin{Conj}[Weak ESC for raising to powers in $K$ on $E$ (wESC$_K$)] \label{wESCK}
Let $\Elg$ be an elliptic curve. Let $K$ is a subfield of $\C$ extending $k_{\Elg}$ of finite transcendence degree. Then for all $x \subset \C$, 
\[
\delta_{E,K}(x) \geq - \trd(K).
\]
\end{Conj}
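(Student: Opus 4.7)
The plan is to derive Conjecture~\ref{wESCK} from wESC in exact parallel with the SC $\Rightarrow$ SC$_K$ derivation given earlier in this section. Writing $j = j(\Elg)$, the argument decomposes into three easy steps, the only nontrivial bookkeeping coming from the passage between $\wp(x)$ and $\expE(x)$.

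The first step is the standard linear-to-algebraic inequality: since $K$-linearly independent tuples are algebraically independent over $K$, one has $\ld_K(x) \geq \trd(x/K) \geq \trd(x) - \trd(K)$, the last bound using that $K$ has finite transcendence degree. Substituting into $\delta_{E,K}$ yields
\[
\delta_{E,K}(x) \geq \trd(x) + \trd(j, \wp(x)) - \ld_{k_{\Elg}}(x/\Lambda) - \trd(K),
\]
and subadditivity of transcendence degree collapses the first two terms to $\trd(j, x, \wp(x))$. The problem is thus reduced to showing $\trd(j, x, \wp(x)) \geq \ld_{k_{\Elg}}(x/\Lambda)$.

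The second step is to apply wESC. I would pick a $k_{\Elg}$-linearly-independent-mod-$\Lambda$ subtuple $x' \subseteq x$ of size $m = \ld_{k_{\Elg}}(x/\Lambda)$; wESC then gives $\trd(j, x', \expE(x')) \geq m$, and monotonicity lifts this to $\trd(j, x, \expE(x)) \geq m$. The third step relates $\trd(j, x, \expE(x))$ back to $\trd(j, x, \wp(x))$: by the differential relation $(\wp'(x_i))^2 = 4\prod_r(\wp(x_i) - e_r)$, the coordinate $\wp'(x)$ of $\expE(x) = [\wp(x) : \wp'(x) : 1]$ is algebraic over $\{e_1, e_2, e_3, \wp(x)\}$, so the two transcendence degrees differ by at most the bounded quantity $\trd(e_1, e_2, e_3)$, which is absorbed by the harmless enlargement of $K$ to contain the field of definition of $\Elg$. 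Chaining the bounds delivers $\delta_{E,K}(x) \geq -\trd(K)$.

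The main (and really only) obstacle is precisely this third step: matching $\trd(j, \wp(x))$ as it appears in the predimension with the tuple $(j, x, \expE(x))$ to which wESC directly applies. Subadditivity of $\trd$ handles the insertion of $x$, and the differential relation for $\wp'$ together with the fact that the $e_r$ are fixed constants lying in the field of definition of $\Elg$ handles the passage from $\wp(x)$ to $\expE(x)$. No ideas beyond the template already established for SC $\Rightarrow$ SC$_K$ are required.
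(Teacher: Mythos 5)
Your overall route is the one the paper intends: the authors never write out the elliptic derivation, but they explicitly present the multiplicative template (SC $\Rightarrow$ SC$_K$ via $\ld_K(x)\geq\trd(x/K)\geq\trd(x)-\trd(K)$ and subadditivity), and your three steps are exactly the elliptic transcription of it, including the correct move of applying wESC to a maximal $k_{\Elg}$-linearly-independent-over-$\Lambda$ subtuple and lifting by monotonicity. Two points deserve comment. First, a small one: your justification of $\ld_K(x)\geq\trd(x/K)$ is stated backwards --- $K$-linearly independent tuples need not be algebraically independent over $K$; the correct reason is that a transcendence basis of $x$ over $K$ is algebraically independent over $K$, hence $K$-linearly independent, so $\trd(x/K)\leq\ld_K(x)$.

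Second, and more substantively, your third step does not deliver the constant as stated. The discrepancy between $\trd(j,x,\expE(x))$ and $\trd(j,x,\wp(x))$ is $\trd(\wp'(x)/j,x,\wp(x))\leq\trd(e_1,e_2,e_3/\Q(j))$, which is bounded but in general nonzero (the $e_r$ are not algebraic over $j$ alone, since they rescale under lattice homotheties fixing $j$). Your proposed fix --- enlarging $K$ to $K'=K(e_1,e_2,e_3)$ --- does not absorb this: since $K\subseteq K'$ one has $\ld_K(x)\geq\ld_{K'}(x)$, so the enlargement only yields $\delta_{E,K}(x)\geq\delta_{E,K'}(x)\geq-\trd(K')=-\trd(K)-\trd(e_1,e_2,e_3/K)$, i.e.\ the defect reappears on the right-hand side. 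What your argument actually proves is $\delta_{E,K}(x)\geq-\trd(K)-c$ for a bounded $c$ (one can take $c\leq 1$ using $\trd(g_2,g_3/j)\leq 1$), not $\delta_{E,K}(x)\geq-\trd(K)$ on the nose. To be fair, the paper itself is cavalier about such additive constants --- the statement in question is a conjecture asserted without proof to follow from wESC, and every subsequent use (e.g.\ Lemma~\ref{ineqE}, with its $-4-\trd(K)-2\trd(k_0)$) only needs the predimension to be bounded below, after which one localises at the strong closure of the empty set --- so the defect is harmless for the paper's purposes. But as a proof of the literal inequality stated, your step three falls short by this bounded constant, and you should either weaken the conclusion accordingly or add a hypothesis such as $k_0\subseteq K$ (or $e_1,e_2,e_3$ algebraic over $K$) under which the constant is genuinely absorbed.
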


Assume wESC$_{K}$ holds. Then, by localising over the strong closure of the empty set, we obtain a non-negative predimension function from $\delta_{E,K}$, for which we have an associated dimension function, which we shall denote $\dd_{E,K}$, and pregeometry, which will be denoted by $\cl_{E,K}$.
The same argument as in the proof of \ref{CCPexp}, using the version of Ax's theorem for Weierstrass $\wp$-functions from \cite{KirAx}, shows that for any countable $K$, $\cl_{E,K}$ has the CCP.


Let us extend Definition~\ref{def-K-rotund} from the multiplicative case to include the elliptic curve case. Since there is no space for confusion, we keep the same terminology.

\begin{Def} \label{def-K-rotund-elliptic}
A pair $(L,W)$ of a $K$-affine subspace $L$ of $\C^n$ and an algebraic subvariety $W$ of $\Elg^n$ is said to be \emph{$K$-rotund} if for any $k \times n$-matrix $m$ with entries in $\End(\Elg)$ of rank $k$ we have we have
\[
\dim m \cdot L + \dim m \cdot W \geq k.
\]
\end{Def}



\section{Models on the complex numbers: the multiplicative group case} \label{section:green-model}

In this section, we will find models for the theories of green points in the multiplicative group case. 

\subsection{The Models}

Throughout this section, let $\Alg = \G_m$ and $A = \Alg(\C) = \C^*$. Since we work in the multiplicative group, we shall use multiplicative notation. We also use the expressions \emph{multiplicatively (in)dependent} instead of $\End(\Alg)$-linearly (in)dependent.


Let $\eps\in \C\setminus(\R \cup i \R)$ and let $Q$ be a non-trivial divisible subgroup of $(\R,+)$ of finite rank. Put
\[
G = \exp(\eps \R + Q).
\]
Note that $G$ is a divisible subgroup of $\C^*$. 

We assume henceforth that $\eps$ is of the form $1 + \beta i$ with $\beta$ a non-zero real number, for we can always replace any $\eps\in \C\setminus(\R \cup i \R)$ for one of this form giving rise to the same $G$.

Consider the $L$-structure $(\C^*,G)$.
The following theorem is the main result of this section.

\begin{Thm} \label{theorem:green}
Let $\eps = 1 + \beta i$, with $\beta$ a non-zero real number, and let $Q$ be a non-trivial divisible subgroup of $(\R,+)$ of finite rank. Let
\[
G = \exp(\eps \R + Q).
\]
Assume SC$_K$ for $K = \Q(\beta i)$. Then:
\begin{enumerate}
\item For every tuple $c \subset \C^*$, there exists a tuple $c' \subset \C^*$ extending $c$, such that $c'$ is strong with respect to the predimension function $(\delta_G)_{c'}$. If $c \subset G$, then we can find such a $c'$ also contained in $G$.
\item The structure $(\C^*,G)$ has the EC-property. Therefore, for every tuple $c \subset G$, strong with respect to $(\delta_G)_c$, the structure $(\C^*,G)_{X_0}$ is a model of the theory $T_{X_0}$, where $X_0 = \spank(c)$ with the structure induced from $(\C^*,G)$.
\end{enumerate} 
\end{Thm}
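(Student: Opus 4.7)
I would prove (1) and (2) by two largely independent arguments, both leveraging the preliminaries of Section~\ref{sec:prelim}: part (1) uses the countable closure property (CCP) of the raising-to-powers pregeometry $\cl_K$ from Lemma~\ref{CCP-K} with $K = \Q(\beta i)$, and part (2) is an analytic-geometric argument following the strategy of \cite{ZBicol}, with Ax's theorem \cite{Ax71} playing the role of a transcendence-theoretic backbone.

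For part (1), the first step is to transfer the problem from $\delta_G$ to the raising-to-powers predimension $\delta_K$. The green subgroup $G = \exp(\eps\R + Q)$ has the property that, for any green point $g$, there is a logarithm $x = \log g$ in $V := \eps\R + Q$; since $\eps \in K$ and $V$ is real-one-dimensional modulo the finite-rank $\Q$-space $Q$, the tuple of logarithms of a multiplicatively independent collection of green points is rigidly constrained as an element of $\C_K$. By working out this constraint over a countable base $B$ containing $2\pi i$, a $\Q$-basis of $Q$ and $\eps$, one obtains an inequality relating $\delta_G(X/c)$ to $\delta_K(\log(X \cap G)/B \cup \log c)$ up to a bounded additive error; SC$_K$ then provides a uniform lower bound for $\delta_G$, ensuring that strong closures exist, and the CCP of $\cl_K$ ensures that these closures are countable. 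Taking $c' := \scl(c)$ settles the general case. For the variant $c \subset G \Rightarrow c' \subset G$, I would use a greedy iteration in $G$: whenever the current countable tuple $c^{(k)} \subset G$ fails to be strong, some finite $X$ witnesses $\delta_G(X/c^{(k)}) < 0$, and I would adjoin the green points appearing in $X$ to form $c^{(k+1)} \subset G$. The CCP guarantees that the union $c' = \bigcup_k c^{(k)}$ is countable and strong.

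For part (2), fix an even $n$ and a rotund $W \subset (\C^*)^n$ of dimension $n/2$. I would study the real-analytic set $\widetilde W := \exp^{-1}(W) \cap V^n \subset \C^n$ and show that its image in $W$ under $\exp$ is Zariski dense. A dimension count: $\exp^{-1}(W)$ is a complex-analytic subset of real dimension $n$ inside $\C^n$ (real dimension $2n$), whereas $V^n$ is a countable union of real affine $n$-planes; rotundity (Definition~\ref{def-rotund}) ensures that no $\Z$-linear projection makes $\widetilde W$ collapse, so $\widetilde W$ is non-empty and its image meets every non-negligible piece of $W$. To upgrade to Zariski density, suppose $\exp(\widetilde W) \subset W'$ for some proper subvariety $W' \subsetneq W$, and let $W''$ be the algebraic locus of a generic analytic branch of $\widetilde W$ under $\exp$; Ax's theorem then forces $W''$ to carry the rotundity data of $W$ in a way incompatible with $\dim W' < n/2$, yielding a contradiction. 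The o-minimality of $\R_{\exp}$ enters to ensure finiteness of real-analytic components and uniformity across the countably many translates of $V$ contributing to $V^n$.

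The main technical obstacle is part (2): the argument requires combining the complex-algebraic structure of $W$, the real-analytic structure of $V^n$, and the transcendence-theoretic rigidity of Ax's theorem to exclude all proper subvarieties of $W$ simultaneously. Additional care is needed for the case $Q \neq 0$, where $V$ is a countable union of parallel real lines rather than a single line — this is a genuine extension of the situation in \cite{ZBicol}, which handled only the torsion-free (i.e.\ $Q = 0$) case.
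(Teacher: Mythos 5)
There is a genuine gap, and it is the same missing idea in both parts: the paper's arguments hinge on \emph{complex conjugation}, which your proposal never uses. For part (1), you cannot get the required lower bound on $\delta_G(y) = 2\trd(y) - \ld(y)$ by applying SC$_K$ to the logarithms $x = \eps t + q$ of $y$ alone: that only yields $\trd(y) \geq \ld_{\Q}(x) - \ld_K(x) - \trd(K)$, and since $\ld_K(x)$ can be as large as $n$, this gives nothing about $2\trd(y) - n$. The paper's Lemma~\ref{lemma:predim-ineq} produces the factor $2$ by writing $2\trd(y) = \trd(y) + \trd(y^c) \geq \trd(y\,y^c)$ and applying SC$_K$ to the doubled tuple $(x,x^c)$; the point is that $\eps^c/\eps \in \Q(\beta i) = K$, so $\ld_K(x\,x^c) \leq n + \ld_{\Q}Q$ barely grows, while $\eps$ and $\eps^c$ are $\R$-linearly independent, so $\ld_{\Q}(x\,x^c) \geq 2n - 2\ld_{\Q}Q$. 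This is exactly where the hypothesis $\eps \notin \R \cup i\R$ and the specific field $K = \Q(\beta i)$ are used, and your ``rigid constraint in $\C_K$'' does not substitute for it. (Two smaller points on part (1): the CCP is not what is needed there --- once $\delta_G$ is bounded below one localises at a tuple of minimal predimension and takes a strong closure, which is finite-dimensional by \cite[Lemma 2.9]{CayGreenTh}; and your greedy iteration only promises a countable $c'$, whereas the statement requires a finite tuple.)

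For part (2), your outline of ``$\widetilde W$ is non-empty and its image meets every non-negligible piece of $W$'' followed by ``Ax's theorem forces $W''$ to carry the rotundity data of $W$'' skips the step that actually carries the proof. The paper parametrises $\log W \cap (\eps\R + \R)^n$ by $(s,t) \in \R^{2n}$ and proves a Main Lemma: at an $\RR$-generic point of $\X$ the projection to $s$ has full o-minimal dimension $n$. This is what licenses an implicit function $s \mapsto t(s)$ on an open set (via definable choice in an o-minimal reduct of $\R_{an}$, not $\R_{\exp}$), and Zariski density then comes cheaply from continuity of $t$ plus density of $Q^n$, not from a global transcendence argument against a fixed $W'$. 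The Main Lemma itself is again a conjugation argument: one shows the fibre $L_{s^0} \cap \log(V \times V^c)$ is discrete, where $L_{s^0}$ is a $K$-affine subspace encoding $x^c = (\eps^c/\eps)x \bmod 2s$, by combining the $K$-rotundity of the pair $(L_{s^0}, V \times V^c)$ (which follows from rotundity of $V$) with the CCP of $\cl_K$ over Kirby's unconditional countable strong set --- so the EC-property is in fact unconditional, and Ax's theorem enters only through the proof of the CCP. Without the conjugated variety $V \times V^c$ and the subspaces $L_s$, there is no mechanism in your sketch to conclude that the $s$-coordinate is generic, which is the whole content of part (2).
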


The above theorem follows immediately from Propositions \ref{prop:green1} and \ref{prop:green2} below. Subsections \ref{subsection:proof1} and \ref{subsection:proof2} are devoted to the corresponding proofs.


\subsection{The Predimension Inequality} \label{subsection:proof1}

In this subsection we prove the first part of Theorem~\ref{theorem:green}. The proof here improves upon the corresponding one in \cite{ZBicol}.

\begin{Lemm} \label{lemma:predim-ineq} 
Let $K = \Q(\beta i)$ and assume SC$_K$.

Then for all $y\in (\C^*)^n$, we have $\delta_G(y) \geq - 3 \ld_{\Q} Q - \trd(K) $.
\end{Lemm}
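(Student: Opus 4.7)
The idea is to translate greenness of multiplicative combinations of the $y_i$ into $K$-linear constraints on a tuple involving logarithms of the $y_i$ and their complex conjugates, and then apply the Schanuel Conjecture for raising to powers in $K$. Without loss of generality assume $y = (y_1,\ldots,y_n)$ is multiplicatively independent, so that $k := \ld(\la y \ra \cap G)$ equals the $\Z$-rank of $\{M \in \Z^n : \prod_i y_i^{m_i} \in G\}$. Fix logarithms $x_i$ of $y_i$, choose $\Z$-linearly independent $M_1,\ldots,M_k \in \Z^n$ such that $\prod_i y_i^{M_{ji}} \in G$, and write $M_j x - 2\pi i k_j = \eps r_j + q_j$ with $r_j \in \R$, $q_j \in Q$, $k_j \in \Z$. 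Using $r_j \in \R$ (so that $r_j = \bar r_j$) to eliminate $r_j$ from this equation and its complex conjugate yields
\[
\bar\eps\,(M_j x) \;-\; \eps\,(M_j\bar x) \;+\; 2i\beta\,q_j \;-\; 2k_j\cdot(2\pi i) \;=\; 0, \qquad j=1,\ldots,k,
\]
whose coefficients $\bar\eps,\,\eps,\,2i\beta$ all lie in $K = \Q(\beta i)$.

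\paragraph{Assembling the key tuple and applying SC$_K$.} Let $b_1,\ldots,b_t$ be a $\Q$-basis of $\spank_\Q\{q_1,\ldots,q_k\} \subseteq Q$ (so $t \leq \ld_\Q Q$), and set $Z := (x,\bar x, b, 2\pi i)$, a tuple of length $2n+t+1$. A direct rank computation — the $M_j$'s being $\Z$-linearly independent forces the $K$-coefficient matrix on the $x$-columns to be invertible — shows that the $k$ displayed relations are $K$-linearly independent, hence
\[
\ld_K(Z) \;\leq\; 2n + t + 1 - k.
\]
On the other hand, applying SC$_K$ to $Z$ gives $\ld_K(Z) + \trd(\exp Z) - \ld_\Q(Z) \geq -\trd(K)$, and combined with $\trd(\exp Z) = \trd(y,\bar y,\exp b_1,\ldots,\exp b_t,1) \leq 2\trd(y) + t$, this rearranges to
\[
\delta_G(y) \;=\; 2\trd(y) - k \;\geq\; \ld_\Q(Z) - 2n - 2t - 1 - \trd(K).
\]

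\paragraph{Finishing and main obstacle.} The target bound $\delta_G(y) \geq -3\ld_\Q Q - \trd(K)$ thus reduces to establishing $\ld_\Q(Z) \geq 2n + 2t + 1 - 3\ld_\Q Q$, i.e., that $Z$ has at most $3\ld_\Q Q - t$ ``extra'' $\Q$-linear dependences. Any such dependence corresponds via $\exp$ to an identity $\prod_i y_i^{m_i}\bar y_i^{m'_i}\prod_\ell \exp(b_\ell)^{n_\ell} \in \mu_\infty$, i.e., an ``extra'' green-like relation linking $y$, $\bar y$ and $\exp(Q)$. In degenerate situations (e.g., when some $y_i$ are real, forcing $\bar x_i = x_i$) the tuple $Z$ can have many $\Q$-dependences; but in those cases the real-linear projection $\phi(z) := \Re z - \beta^{-1}\Im z$, which controls greenness via $M\phi(x) \in Q + 2\pi\beta^{-1}\Z$, collapses sufficiently that $k$ itself is forced to be small by direct inspection — the extra slack $3\ld_\Q Q$ (as opposed to the $1+\ld_\Q Q$ obtained in the generic case) is exactly what is needed to balance the two regimes. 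Carrying out this case analysis cleanly is the main technical hurdle and, presumably, the point at which the proof improves upon the corresponding one in \cite{ZBicol}.
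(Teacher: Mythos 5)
Your argument has a genuine gap at exactly the step you yourself flag as ``the main technical hurdle.'' The lower bound $\ld_{\Q}(Z) \geq 2n + 2t + 1 - 3\ld_{\Q}Q$ is false in general (as you note, if all the $y_i$ are real then $\ld_{\Q}(Z) \leq n+t+1$, which violates the bound for large $n$), and the compensating claim --- that whenever $Z$ acquires extra $\Q$-linear dependences, the green rank $k$ drops by enough to restore $2\trd(y)-k \geq -3\ld_{\Q}Q - \trd(K)$ --- is asserted but never proved. That trade-off is the entire content of the lemma in your formulation, so the proof is incomplete. Everything up to that point (the translation of greenness into $K$-linear relations on $(x,\bar x,b,2\pi i)$, the rank argument showing those relations are independent, and the application of SC$_K$) is sound, but it only reduces the statement to an inequality you do not establish.

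The missing idea is a reduction that removes the need for any such case analysis: since $\delta_G(X) = 2\trd(X) - \ld(X\cap G) \geq 2\trd(X\cap G) - \ld(X\cap G) = \delta_G(X\cap G)$ for $\spank$-closed $X$, it suffices to prove the bound for $y \in G^n$ multiplicatively independent, so that $\delta_G(y) = 2\trd(y) - n$ and one never has to track a partial green rank $k<n$ against partial degeneracies of $Z$. In that case every coordinate has the form $x_i = \eps t_i + q_i$ with $t_i \in \R$, $q_i \in Q$; the conjugate satisfies $x^c - q = \frac{\eps^c}{\eps}(x - q)$ with $\frac{\eps^c}{\eps} \in K$, and SC$_K$ applied to the tuple $(x,x^c)$ alone (no $2\pi i$ and no basis of $Q$ need be adjoined) gives $2\trd(y) \geq \ld_{\Q}(x x^c) - \ld_K(x x^c) - \trd(K)$. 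The two elementary estimates $\ld_K(x x^c) \leq n + \ld_{\Q}Q$ and $\ld_{\Q}(x x^c) \geq 2n - 2\ld_{\Q}Q$ --- the latter after a $\Q$-linear change of coordinates making $q_i = 0$ for $i > \ld_{\Q}Q$, using that $\eps$ and $\eps^c$ are $\R$-linearly independent --- then yield the stated bound directly. I would restructure your proof around this reduction rather than attempting to complete the two-regime balancing argument.
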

\begin{proof}
We may assume $y\in G^n$ and is multiplicatively independent. Let $x \in \C^n$ be such that $\exp(x) = y$ with $x = \eps t + q$, $t\in \R^n$, $q\in Q^n$. Note that $x$ is $\Q$-linearly independent over the kernel of $\exp$.

Since complex conjugation is a field automorphism of $\C$, we have
\begin{equation} \label{eq:pd1}
2 \trd(y) = \trd(y) + \trd(y^c).
\end{equation}
Also,
\begin{equation} \label{eq:pd2}
\trd(y) + \trd(y^c) \geq \trd(y y^c) = \trd(\exp(x)\exp(x^c)).
\end{equation}
By the SC$_K$,
\begin{equation*}
\ld_K(x x^c) + \trd(y y^c) - \ld_{\Q}(x x^c) \geq -\trd(K)
\end{equation*}
and therefore
\begin{equation} \label{eq:pd3}
\trd(y y^c) \geq \ld_{\Q}(x x^c) - \ld_K(x x^c) -\trd(K)
\end{equation}
Combining \ref{eq:pd1},\ref{eq:pd2} and \ref{eq:pd3}, we obtain 
\[
2 \trd(y) \geq \ld_{\Q}(x x^c) - \ld_K(x x^c) -\trd(K).
\] 
Thus, in order to prove the lemma, it is sufficient to show that the difference $\ld_{\Q}(x x^c) - \ld_K(x x^c)$ is always at least $n - 3 \ld_{\Q} Q$.

Let $l:=\ld_{\Q} Q$. Since $x = \eps t + q$, we have $x^c = \eps^c t + q$. Also:
\[
\frac{\eps^c}{\eps} 
= \frac{1-\beta i}{1+ \beta i} 
\in \Q(\beta i) = K.
\]
From this we obtain the following upper bound for $\ld_K(xx^c)$:
\begin{equation} \label{eq:bound1}
\ld_K(x x^c) \leq \ld_K(\eps t, q) \leq n + \ld_{\Q} Q = n + l.
\end{equation}

We now need to bound $\ld_{\Q}(x x^c)$ from below. Note that the values $\ld_{\Q}(x x^c)$ and $\ld_K(x x^c)$ do not change if we replace $x$ by any $x'$ with the same $\Q$-linear span (and $x^c$ by $x'^c$ accordingly). It follows that we can assume that for every $i \in \{l +1, \dots, n\}$, $q_i = 0$. Indeed, one can apply appropriate regular $\Q$-linear transformations to $x$ (and accordingly to $x^c$) to reduce to this case.

Since $x$ is linearly independent, in particular we have $\ld_{\Q}(x_{l +1},\dots,x_n) = n - l$, i.e.  $\ld_{\Q}(\eps t_{l +1},\dots,\eps t_n) = n - l$.  Moreover, since $\eps \not\in \R \cup i\R$, $\eps$ and $\eps^c$ are $\R$-linearly independent. Therefore
\[
\ld_{\Q}(\eps t_{l +1},\dots,\eps t_n, \eps^c t_{l +1},\dots,\eps^c t_n) = 2 (n - l).
\]
Thus,
\begin{equation}
\label{eq:bound2}
\ld_{\Q}(x x^c) \geq \ld_{\Q}(x_{l +1},\dots,x_n,x^c_{l +1},\dots,x^c_n) \geq 2n - 2l.
\end{equation}
From \ref{eq:bound1} and \ref{eq:bound2} we conclude
\[
\ld_{\Q}(x x^c) - \trd(x x^c) \geq (2n- 2l) - (n + l) = n - 3 l.
\]
\end{proof}


\begin{Prop} \label{prop:green1}
Assume SC$_K$ for $K = \Q(\beta i)$. 
Then for every tuple $c \subset \C^*$, there exists a tuple $c' \subset \C^*$, extending $c$, such that $c'$ is strong with respect to the predimension function $(\delta_G)_{c'}$. If $c \subset G$, then we can find such a $c'$ also contained in $G$.
\end{Prop}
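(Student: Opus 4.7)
My plan is to prove both assertions by the standard minimization argument made possible by Lemma~\ref{lemma:predim-ineq}. That lemma shows $\delta_G$ is bounded below by $-3\ld_\Q Q - \trd(K)$ on all finite-dimensional $\spank$-closed subsets of $\C^*$; since $\delta_G$ takes integer values, for any finite-dimensional $\spank$-closed $X_0$ the value
\[
m(X_0) := \min \{ \delta_G(X) : X \supseteq X_0, \ X \text{ finite-dimensional and } \spank\text{-closed}\}
\]
is attained. Throughout I will also use the additivity identity
\[
\delta_G(X + Y) = \delta_G(X) + \delta_G(Y/X),
\]
valid for finite-dimensional $\spank$-closed $X$ and $Y$, which follows from the additivity of transcendence degree along towers together with the fact that $X \cap G \subseteq (X+Y) \cap G$ is an inclusion of $\Q$-vector subspaces of the divisible group $G$.

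For the first (general) assertion, I set $X_0 = \spank(c)$ and choose a finite-dimensional $\spank$-closed $X^* \supseteq X_0$ attaining $m(X_0)$. For any finite-dimensional $\spank$-closed $Y$, the sum $X^* + Y$ also extends $X_0$, so $\delta_G(X^* + Y) \geq \delta_G(X^*)$ by minimality; the additivity identity then yields $\delta_G(Y/X^*) \geq 0$. Hence $X^*$ is strong, and any tuple $c' \supseteq c$ with $\spank(c') = X^*$ works.

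For the case $c \subseteq G$, I instead minimize $\delta_G$ over finite-dimensional $\spank$-closed sets $X$ with $X_0 \subseteq X \subseteq \spank(G)$; again the minimum is attained, at some $X^*$. To check that $X^*$ is strong in $(\C^*,G)$ (and not merely within $\spank(G)$), fix any finite-dimensional $\spank$-closed $Y \subseteq \C^*$, set $k = \ld((Y+X^*) \cap G / X^* \cap G)$, and pick $g_1,\dots,g_k \in (Y+X^*) \cap G$ representing a basis of this quotient. The set $X^{**} := X^* + \spank(g_1,\dots,g_k)$ is a $\spank$-closed extension of $X^*$ still contained in $\spank(G)$, so by the choice of $X^*$,
\[
0 \leq \delta_G(X^{**}/X^*) = 2\trd(g/X^*) - k.
\]
Since $g \subseteq Y + X^* \subseteq \acl(Y \cup X^*)$ gives $\trd(g/X^*) \leq \trd(Y/X^*)$, we conclude $\delta_G(Y/X^*) = 2\trd(Y/X^*) - k \geq 0$. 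Finally, divisibility of $G$ lets us realise a $\Q$-basis of $X^*$ over $X_0$ by elements of $G$, producing $c' \supseteq c$ inside $G$ with $\spank(c') = X^*$.

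The proof is essentially a bookkeeping exercise around the bound of Lemma~\ref{lemma:predim-ineq}; the only genuine subtlety is ensuring that the constrained minimizer inside $\spank(G)$ remains strong against arbitrary $\spank$-closed $Y \subseteq \C^*$, which is precisely what the passage from $Y$ to $X^{**}$ above accomplishes.
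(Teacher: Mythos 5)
Your proof is correct and rests on the same engine as the paper's: Lemma~\ref{lemma:predim-ineq} bounds $\delta_G$ below by an integer, so a minimum of $\delta_G$ over the relevant family of finite-dimensional $\spank$-closed sets is attained and the minimizer is strong. The paper organizes this slightly differently---it first fixes a \emph{global} minimizer $c_0$, placed inside $G$ via the observation $\delta_G(X)\geq\delta_G(X\cap G)$, and then takes the strong closure of $c$ with respect to the localisation $(\delta_G)_{c_0}$---whereas you minimize relative to $\spank(c)$ directly; your explicit check that the minimizer constrained to $\spank(G)$ remains strong against arbitrary $\spank$-closed $Y\subseteq\C^*$ (via the passage to $X^{**}$) is a correct and welcome expansion of the paper's ``it is easy to see.''
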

\begin{proof}
By Lemma \ref{lemma:predim-ineq}, the set of values of $\delta_G$ on $(\C^*,G)$ is bounded from below in $\Z$. We can therefore find a tuple $c^0$ such that $\delta_G(c^0)$ is minimal. Since for every $\spank$-closed set $X$, $\delta_G(X) \geq \delta_G(X\cap G)$, we can find such $c_0$ with all its coordinates in $G$. Clearly, $c^0$ is strong for $\delta_G$, hence the localisation $(\delta_G)_{c_0}$ is a non-negative predimension function on $\C^*$.

For every $c \subset \C^*$, let $c' \subset \C^*$ be a tuple containing both $c$ and $c_0$ that generates the strong closure of $c$ with respect to $(\delta_G)_{c_0}$. The tuple $c'$ is, by definition, strong for $(\delta_G)_{c_0}$. Since $c_0 \supset c'$, it follows that $c'$ is strong for $(\delta_G)_{c'}$. It is easy to see, that if $c$ is contained in $G$, then $c'$ can be taken to be contained in $G$.
\end{proof}


\subsection{Existential Closedness} \label{subsection:proof2}

This subsection is devoted to the proof of the following proposition:

\begin{Prop} \label{prop:green2}
The structure $(\C^*,G)$ has the EC-property. Therefore, for every tuple $c \subset G$, strong with respect to $(\delta_G)_c$, the structure $(\C^*,G)_{X_0}$ is a model of the theory $T_{X_0}$, where $X_0 = \spank(c)$ with the structure induced from $(\C^*,G)$.
\end{Prop}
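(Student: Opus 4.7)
The plan is as follows. Let $W \subseteq (\C^*)^n$ be a rotund subvariety of complex dimension $n/2$ (with $n$ even), and let $W' \subsetneq W$; I must produce a point of $(W \setminus W') \cap G^n$. Using Proposition~\ref{prop:green1} together with Remark~\ref{rem-kirby-K}, I may assume that $W, W'$ are defined over a finite set $B_0 \subset \C^*$ which is strong with respect to the predimensions $\delta_G$ and $\delta_K$ for $K := \Q(\beta i)$. It then suffices to produce a realisation of $W$ over $B_0$ lying in $G^n$ and generic with respect to the pregeometry $\cl_K$, as any such point automatically avoids the proper subvariety $W'$.

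Since $G$ is not closed under complex conjugation, I pass to the doubled problem. Set $\mu := \eps^c/\eps \in K$, let $\tilde W := W \times W^c \subseteq (\C^*)^{2n}$ (where $W^c$ is the variety defined by conjugating the equations of $W$), and for each $q \in Q^n$ introduce the $K$-affine subspace
\[
L_q := \{(x,x') \in \C^{2n} : x' - \mu x = (1-\mu)q\} = \{(\eps t + q,\, \eps^c t + q) : t \in \C^n\}
\]
of $\C$-dimension $n$. If $(x,x') \in L_q$ satisfies $\exp(x,x') \in \tilde W$ and $t := \eps^{-1}(x - q) \in \R^n$, then automatically $x' = x^c$ and $y := \exp(x)$ lies in $W \cap G^n$. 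The first subclaim is that the pair $(L_q, \tilde W)$ is $K$-rotund in the sense of Definition~\ref{def-K-rotund}: for a rank-$k$ integer matrix $m = (m_1, m_2)$ of size $k \times 2n$, one has $\dim m \cdot L_q = \rank_\C(m_1 + \mu m_2)$, and combining this with the rotundness of $W$ (hence of $W^c$) via a case analysis on whether the $\mu$-combination drops rank yields $\dim m \cdot L_q + \dim \tilde W^m \geq k$.

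The analytic core now parallels the proof of Lemma~\ref{CCPexp}: under SC$_K$, Lemma~\ref{CCP-K} gives the CCP for $\cl_K$, and Ax's theorem — applied to a hypothetical positive-dimensional analytic family of solutions inside $L_q$ mapped by $\exp$ into $\tilde W$ — together with $K$-rotundness produces a contradiction. This shows $S_q := \exp^{-1}(\tilde W) \cap L_q \subseteq \C^n$ is a complex analytic set of pure dimension $n/2$, whose $\cl_K$-generic points over $B_0$ form a cocountable subset. \emph{The main obstacle} is the descent to real solutions: one must show that $S_q \cap \R^n$ is non-empty for some $q \in Q^n$, and that as $q$ varies over $Q^n$ the union of the images $\exp(\eps \cdot (S_q \cap \R^n) + q)$ is Zariski dense in $W$. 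Since $\R^n$ is a totally real submanifold of $L_q \cong \C^n$ of real dimension $n$ — the same as the real dimension of $S_q$ — the intersection is generically real $0$-dimensional and its non-emptiness is not automatic. This step is handled by combining complex-analytic arguments (in the spirit of the proof of Lemma~\ref{CCPexp}) with the o-minimality of $\R_{\exp}$ — whose relevance is precisely that the finite $\Q$-rank of $Q$ makes $\eps\R + Q$ definable — and forms the heart of the proof, along the lines of the corresponding argument in \cite{ZBicol} now corrected and upgraded to accommodate arbitrary finite-rank $Q$ and the full $K$-linear structure.
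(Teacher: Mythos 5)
Your opening moves --- doubling the problem via complex conjugation, introducing the $K$-affine subspaces $L_q$, establishing $K$-rotundity of the pair with $V\times V^c$, and invoking the CCP for $\cl_K$ --- coincide with the paper's Lemmas \ref{lemm:littlething} and \ref{NormKG} and its use of Remark \ref{rem-kirby-K}. But there is a genuine gap exactly where you place ``the heart of the proof'', and it is compounded by a wrong dimension count. Since $L_q$ has dimension $n$ and $\log(V\times V^c)$ has codimension $n$ in $\C^{2n}$, the expected dimension of $S_q=L_q\cap\log(V\times V^c)$ is $0$, not $n/2$ (you seem to have imposed only one of the two conditions cut out by $W\times W^c$). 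The content of the paper's Main Lemma \ref{MainLemma} is precisely that this intersection \emph{is} $0$-dimensional, i.e.\ discrete: a positive-dimensional component would carry uncountably many points mapping to generics of $V\times V^c$ (Baire category), the CCP for $\cl_K$ allows only countably many tuples with $\delta_K\le 0$ over a countable strong set, and a tuple with $\delta_K>0$ contradicts $K$-rotundity via the fibre-dimension computation. With the correct count, your strategy of fixing $q\in Q^n$ first and then seeking real points of the discrete set $S_q$ is hopeless: $S_q\cap\R^n$ will generically be empty, and no amount of complex-analytic or o-minimal machinery will produce points there.

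The paper runs the argument in the opposite direction. Real solutions exist in abundance from the outset: $(s,t)\mapsto\exp(\eps t+s)$ is a real-analytic local diffeomorphism from $\R^{2n}$ onto $(\C^*)^n$, so $\X$ has real dimension $n$ and one starts from an $\RR$-generic point $(s^0,t^0)$ of it. Discreteness of $L_{s^0}\cap\log(V\times V^c)$ then makes $x^0$ $\RR$-definable over $s^0$, whence $\dim_{\RR}(s^0)=n$, whence the projection of $\X$ to the $s$-coordinates contains an open set $S$; definable choice and generic continuity in the o-minimal structure $\RR$ yield a continuous section $s\mapsto t(s)$ on $S$ (Lemma \ref{ImplicitG}); only at the very end is $s$ specialised into the dense set $Q^n$, avoiding the closed set $y^{-1}(V')$. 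Two further misconceptions in your write-up: o-minimality is used for dimension theory, definable choice and generic continuity of definable maps --- certainly not to make $\eps\R+Q$ definable, which it cannot be, being a dense subgroup; and your opening reduction to producing a \emph{generic} point of $W$ in $G^n$ is the strong EC-property, which the paper can only establish assuming the CIT with parameters (Subsection \ref{sec:omegasat}) --- the EC-property itself only requires, for each proper $V'$, some point of $(V\setminus V')\cap G^n$, which is what the continuity-plus-density argument delivers.
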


For the rest of Subsection~\ref{subsection:proof2}, let us fix an even number $n\geq 1$ and a rotund variety $V\subset(\C^*)^n$ of dimension $\frac{n}{2}$ defined over $k_0(C)$ for some finite $C \subset \C$. We need to show that the intersection $V \cap G^n$ is Zariski dense in $V$. 

Let us define the set 
\[
\X = \{ (s,t) \in \R^{2n} : \exp(\eps t + s) \in V \}.
\]
Note that if $(s,t)$ is in $\X \cap (Q^n \times \R^n)$, then the corresponding point $y : = \exp(\eps t + s)$ is in $V \cap G^n$. Thus, in order to find points in the intersection $V \cap G^n$, we shall look for points $(s,t)$ in $\X$ with $s \in Q^n$.

Our strategy for this is to find an implicit function for $\X$ defined on an open set $S \subset \R^n$, assigning to every $s \in S$ a point $t(s) \in \R^n$ such that $(s,t(s)) \in \X$. Since $Q^n$ is dense in $\R^n$, the intersection $S \cap Q^n$ is non-empty, and therefore we can find points $(s,t(s))$ in $\X$ with $s \in Q^n$. 

Let $\RR$ be the expansion of the real ordered field by the restrictions of the real exponential function and the sine function to all bounded intervals with rational end-points, and by constants for the real and imaginary parts of the elements of $\Q(C)$.

Since $\RR$ is an expansion by constants of a reduct of $\R_{an}$, $\RR$ is o-minimal. Note that the set $\X$ is \emph{locally definable} in $\RR$, i.e. its intersection with any bounded box with rational endpoints is a definable set in $\RR$.

Let us briefly introduce some conventions and basic facts from dimension theory in o-minimal structures. Firstly, if $\RR$ is o-minimal, the definable closure $\dcl_{\RR}$ coincides with the algebraic closure $\acl_{\RR}$ and it is a pregeometry (that $\dcl_{\RR}$ satisfies the exchange axiom follows from the Monotonicity Theorem (\cite[3.1.2]{vdD})).
The dimension function associated to the pregeometry $\dcl_{\RR}$ will be denoted by $\dim_{\RR}$.

For expansions of the reals, we have the following key fact: Suppose $\RR$ is an expansion of the real ordered field in a countable language. Then for any $X \subset \R^n$ that is locally definable in $\RR$ over a countable set $A$ we have
\[
\max_{x\in X} \dim_{\RR} (x/A) = \dim_\R X,
\]
where $\dim_\R X$ is the \emph{topological dimension} of $X$, i.e. the maximum $ k\leq n$ such that for some coordinate projection $\pi$ from $\R^n$ to $\R^k$, the set $\pi(X)$ has interior.
If $X$ is a real analytic set, then $\dim_\R X$ is also its \emph{real analytic dimension}, i.e. the maximum $k$ such that for some $x \in X$ and open neighbourhood $V_x$ of $x$, $X \cap V_x$ is a real analytic submanifold of $\R^n$ of dimension $k$. (The first part of the above fact follows easily from the Baire Category Theorem, as noted in \cite[Lemma 2.17]{HruPillay}. The second part is a standard fact in real analytic geometry.)

If $X \subset R^n$ is a locally definable set in $\RR$ over a $A \subset R$, an element $b$ of $X$ is said to be \emph{generic in $X$ over $A$} if 
\[
\dim_{\RR} (b/A) = \max_{x\in X} \dim_{\RR} (x/A).
\]


Our proof of Proposition~\ref{prop:green2} relies on the following lemma, whose proof we postpone until the next subsection.

\begin{Lemm}[\textbf{Main Lemma}] \label{MainLemma}
Suppose $(s^0,t^0)$ is an $\RR$-generic point of $\X$, i.e. $\dim_\RR(s^0,t^0) = \dim_\R \X = n$. Then $\dim_\RR(s^0) = n$.
\end{Lemm}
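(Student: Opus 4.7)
I would argue by contradiction, combining Ax's theorem on differential Schanuel (\cite[Statement SD]{Ax71}) with the rotundity of $V$, in a structure paralleling the proof of Lemma~\ref{CCPexp} but adapted to treat $(x, x^c)$ simultaneously. Suppose for contradiction $\dim_{\RR}(s^0) < n$. Since $\dim_{\RR}(s^0, t^0) = n$, the fibre of $\pi_s \colon \X \to \R^n$ through $(s^0, t^0)$ has $\RR$-dimension at least $1$; by o-minimal cell decomposition applied to this locally $\RR$-definable fibre, extract a non-constant real-analytic arc $t \colon (-\delta, \delta) \to \R^n$ with $t(0) = t^0$ and $(s^0, t(u)) \in \X$ for all $u$. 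Since the constraint $\exp(\eps t(u) + s^0) \in V$ is real-analytic, $t$ extends to a holomorphic map on a complex disk $U \ni 0$ and the relation persists on $U$ by the identity theorem. Set $x(u) := \eps t(u) + s^0$, $\tilde x(u) := \eps^c t(u) + s^0$, $y := \exp(x) \in V$, and $\tilde y := \exp(\tilde x) \in V^c$ (the last by analytic continuation from the real axis). View the four tuples as elements of the field $\F$ of meromorphic germs at $0$ with derivation $D$ and constants $\C$.

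Let $\rho := \ld_{\Q}\{Dt_1, \dots, Dt_n\} \geq 1$. Choose an integer $(n-\rho) \times n$ matrix $M_0$ of rank $n-\rho$ whose rows span the $\Q$-relations among the $Dt_i$; then $M_0 t$ is a tuple of constants, so $M_0 x$ and $M_0 \tilde x$ are constants, and $y^{M_0} \equiv y^{0, M_0}$, $\tilde y^{M_0} \equiv y^{0, c, M_0}$. Hence $y(u) \in V' := V \cap \{y^{M_0} = y^{0,M_0}\}$ and $\tilde y(u) \in V'^c$ for all $u$. A crucial computation shows that the $\Q$-linear rank of $(Dx_i, D\tilde x_i)_{i=1}^n$ equals $2\rho$: any relation $\sum_i (a_i Dx_i + b_i D\tilde x_i) = 0$ becomes $\sum_i (a_i \eps + b_i \eps^c) Dt_i = 0$ with coefficients in $K := \Q(\beta i)$; the $Dt_i$ being real-valued on the real axis, their $\Q$-independence upgrades to $K$-independence (any $K$-relation splits by real and imaginary parts into two $\Q$-relations, then extends by the identity theorem), and $\Q$-independence of $\eps, \eps^c$ (forced by $\beta \neq 0$) yields $a_i = b_i = 0$. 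Ax's theorem applied to a $2\rho$-subtuple of $(x_i, \tilde x_i)$ with $\Q$-independent derivatives then gives
\begin{equation*}
\trd_{\C}(x, \tilde x, y, \tilde y) \;\geq\; 2\rho + 1.
\end{equation*}

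For the upper bound, $(x, \tilde x)$ lies in $L' := L \cap \{M_0 x = \text{const}\}$, where $L := \{(x, \tilde x) \in \C^{2n} : \tilde x = s^0 + (\eps^c/\eps)(x - s^0)\}$ has complex dimension $n$, so $\dim L' = \rho$. Since $(s^0, t^0)$ is $\RR$-generic in $\X$ and $\phi(s, t) := \exp(\eps t + s)$ is locally a diffeomorphism onto $(\C^*)^n$, $y^0 = \phi(s^0, t^0)$ is $\RR$-generic in $V$, hence algebraically generic in $V$. The rotundity hypothesis applied to $M_0$ gives $\dim(M_0 \cdot V) \geq (n-\rho)/2$, so the fibre $V'$ through the generic point $y^{0, M_0}$ has $\dim V' \leq n/2 - (n-\rho)/2 = \rho/2$; likewise $\dim V'^c \leq \rho/2$. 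Therefore
\begin{equation*}
\trd_{\C}(x, \tilde x, y, \tilde y) \;\leq\; \dim L' + \dim V' + \dim V'^c \;\leq\; \rho + \tfrac{\rho}{2} + \tfrac{\rho}{2} = 2\rho,
\end{equation*}
contradicting the Ax bound.

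The main obstacle is the $\Q$-rank computation above, which promotes $\Q$-independence of the $Dt_i$ to $K$-independence (using real-analyticity) and then doubles through $\eps, \eps^c$; this is where the specific form $\eps = 1 + \beta i$ with $\beta \neq 0$ enters essentially. A secondary point is verifying that the specific fibre $V'$ through $y^0$ attains the generic dimension of the projection $M_0 \colon V \to M_0 \cdot V$, which follows from the $\RR$-genericity of $y^0$ in $V$.
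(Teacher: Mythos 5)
Your argument is correct, but it takes a genuinely different route from the paper. The paper works with the complex-analytic set $L_{s^0} \cap \log(V\times V^c)$ near $(x^0,(x^0)^c)$ and shows its local irreducible components are zero-dimensional, whence $x^0 \in \dcl_{\RR}(s^0)$; the proof of that claim goes through a Baire category argument (to produce uncountably many points of a putative positive-dimensional component with generic image in $V \times V^c$), the countable closure property of the raising-to-powers pregeometry $\cl_K$ (which rests on Kirby's countable strong set and, ultimately, on Ax's theorem), and the $K$-rotundity of the pair $(L_{s^0}, V\times V^c)$ established in Lemmas~\ref{lemm:littlething} and~\ref{NormKG}. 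You instead run the Ax argument directly on an analytic arc in the real fibre over $s^0$ through $t^0$, complexify, and play the Ax lower bound $2\rho+1$ against the upper bound $\dim L' + \dim V' + \dim V'^c \le 2\rho$ obtained from rotundity. Your rank-doubling computation for $(Dx, D\tilde x)$ is precisely the content of Lemma~\ref{lemm:littlething} in differential clothing, and your dimension count replaces the $K$-rotundity formalism. What your route buys is self-containedness: no CCP, no countable strong sets, no Baire category — you can do this because your arc passes through the $\RR$-generic point, so $y^0$ is already algebraically generic in $V$, whereas the paper, working with an arbitrary positive-dimensional component, must manufacture a sufficiently generic point on it. What the paper's route buys is modularity: $K$-rotundity and the CCP lemma are reused verbatim in the elliptic case of Section~\ref{section:elliptic-model}. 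Two small points to tighten: (i) o-minimal cells are only $C^1$ in general, so to get a \emph{real-analytic} arc through $t^0$ you should invoke analytic cell decomposition in $\R_{an}$ (of which $\RR$ is a reduct) or the subanalytic curve selection lemma — analyticity is essential since Ax needs a differential field; (ii) the assertion that $\Q$-independence of the $Dt_i$ ``upgrades to $K$-independence'' is false for general elements of $K=\Q(\beta i)$ (e.g.\ real irrational elements such as $-\beta^2$ do not split), but the coefficients you actually encounter lie in $\Q + \Q\beta i$, for which the real/imaginary splitting along the real axis is valid, so the conclusion $a_i=b_i=0$ stands.
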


Let us now continue with the proof of the existential closedness, using the Main Lemma.

\begin{Lemm} \label{ImplicitG}
Suppose $(s^0,t^0)$ is an $\RR$-generic point of $\X$. There is a continuous $\RR$-definable function $s \mapsto t(s)$ defined on a neighbourhood $S\subset \R^n$ of $s^0$ and taking values in $\R^n$ such that for all $s \in S$, the point $y(s) := \exp(\eps t(s) + s)$ is in $V$.
\end{Lemm}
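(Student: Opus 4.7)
The plan is to use the Main Lemma to upgrade the $\RR$-genericity of $(s^0,t^0)$ in $\X$ to $\RR$-genericity of $s^0$ in $\R^n$, and then to invoke definable choice and cell decomposition in the o-minimal structure $\RR$ to extract a continuous local section of the projection $\pi\colon \X\to\R^n$, $(s,t)\mapsto s$, on a neighbourhood of $s^0$.

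Concretely, I would first fix a bounded open box $B\subset\R^{2n}$ with rational endpoints containing $(s^0,t^0)$ in its interior; since $\X$ is only locally definable in $\RR$, it is the restriction $\X_B:=\X\cap B$ that is genuinely $\RR$-definable, say over some finite parameter set $A$. Set $Y:=\pi(\X_B)\subset\R^n$. By the Main Lemma, $\dim_{\RR}(s^0/A)=n$, so $s^0$ is $\RR$-generic in $\R^n$ over $A$; in particular it cannot belong to the frontier $\overline Y\setminus \operatorname{int}(Y)$, which is an $A$-definable subset of $\R^n$ of dimension strictly less than $n$. Hence $s^0$ lies in $\operatorname{int}(Y)$. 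Applying definable choice for o-minimal expansions of an ordered field (\cite[Chapter 6]{vdD}) to $\pi|_{\X_B}$ yields an $\RR$-definable function $\sigma\colon Y\to\R^n$ with $(s,\sigma(s))\in\X_B$ for every $s\in Y$, and a cell decomposition of $Y$ compatible with $\sigma$ provides an open neighbourhood $S\subset Y$ of $s^0$ (contained in a single open cell of the decomposition) on which $\sigma|_S$ is continuous. Setting $t(s):=\sigma(s)$ and $y(s):=\exp(\eps t(s)+s)$, the inclusion $(s,t(s))\in\X$ gives $y(s)\in V$ for all $s\in S$, as required.

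All the substantive content sits in the Main Lemma. Once the reduction $\dim_{\RR}(s^0)=n$ is granted, the rest is routine o-minimal bookkeeping: definable selection produces the function, cell decomposition produces continuity. The one place where things could fail \emph{without} Lemma~\ref{MainLemma} is precisely the step locating $s^0$ in the interior of $Y$: if $s^0$ only had $\RR$-dimension $<n$, the image $Y$ could be lower-dimensional or collapse near $s^0$, and no continuous function $s\mapsto t(s)$ of the required form would exist on a neighbourhood of $s^0$. So the sole nontrivial input in proving this lemma is the dimensional statement supplied by Lemma~\ref{MainLemma}.
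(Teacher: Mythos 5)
Your proposal is correct and follows essentially the same route as the paper: invoke the Main Lemma to get $\dim_{\RR}(s^0)=n$, deduce that $s^0$ lies in the interior of the projection of a definable bounded piece of $\X$, apply definable choice to get a section, and use o-minimality (cell decomposition) to obtain continuity on a neighbourhood of $s^0$. The only cosmetic difference is that the paper shrinks $S$ to avoid the (lower-dimensional, definable) discontinuity locus of the chosen section rather than restricting to an open cell, which amounts to the same thing.
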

\begin{proof}
Let $\pi: \R^{2n} \to \R^n$ be the projection onto the first $n$ coordinates. 

Let $\X^0$ be the intersection of $\X$ and a box with rational end-points containing $(s^0,t^0)$. By the Main Lemma (\ref{MainLemma}), $\dim_{\RR}(s^0) = n$. Since $\pi(\X^0)$ is definable in $\RR$, we have $\dim_{\R} \pi(\X^0) = \max_{s\in \pi(\X^0)} \dim_{\RR} (s) \geq \dim_{\RR}(s^0) = n$. Therefore $\dim_{\R} \pi(\X^0) = n$, and hence the set $\pi(\X^0)$ contains an open neighbourhood $S$ of $s^0$. 

By the definable choice property of o-minimal expansions of ordered abelian groups (\cite[6.1.2]{vdD}), there is an $\RR$-definable map $t: \pi(\X^0) \to \R^n$ such that for all $s \in \pi(\X^0)$, $(s,t(s))$ is in $\X^0$. In particular, for all $s \in S$, $(s,t(s)) \in \X$, i.e. $y(s) := \exp(\eps t(s) + s)$ is in $V$.

The o-minimality of $\RR$ also gives that the set of points where the $\RR$-definable function $t$ is discontinuous is $\RR$-definable and of dimension strictly lower than $n$ (this follows from  the $C^1$-Cell Decomposition Theorem \cite[7.3.2]{vdD}, together with the fact that the boundary of any subset of $\R^n$ has dimension strictly less than $n$ \cite[4.1.10]{vdD}). Thus, by making $S$ smaller if necessary, we may assume $t$ is continuous on $S$.
\end{proof}

\begin{proof}[Proof of Proposition~\ref{prop:green2}] 
Let $V'$ be a proper subvariety of $V$. We need to see that the intersection $(V \setminus V') \cap G^n$ is non-empty.

Extending $C$ if necessary, we may assume that $V$ and $V'$ are defined over $C$.

Take an element $y^0$ of $V \setminus V'$ with $\dim_\RR(y^0) = \dim_\R V = n$. Let $x^0 \in \C^n$ be such that $\exp(x^0) = y^0$ and let $t^0, s^0 \in \R^n$ be such that $x^0 = \eps t^0 + s^0$. 

Note that $\dim_{\RR} (s^0,t^0) = \dim_{\RR}(x^0) = \dim_{\RR}(y^0) = n$. Hence $(s^0,t^0)$ is $\RR$-generic in $\X$.

Let $S$ and the map $s \mapsto t(s)$ be as provided by Lemma~\ref{ImplicitG} for $\RR$ and $(s^0,t^0)$. Consider the map $s \mapsto y(s) := \exp(\eps t(s) + s)$ defined on $S$. This map is continuous, hence $y^{-1}(V')$ is a closed subset of $S$ not containing $s^0$. Thus, $S' = S \setminus y^{-1}(V')$ is an open neighbourhood of $s^0$.

Since $Q^n$ is dense in $\R^n$, we can take a point $q$ in $S' \cap Q^n$, and thus obtain a corresponding point $y(q)$ in $(V \setminus V') \cap G^n$.
\end{proof}

\subsection{Proof of the Main Lemma} \label{subsection:ProofMainLemma}

The image of $V$ under complex conjugation, $V^c$, plays an important role in our proof of the Main Lemma. Since complex conjugation is a field isomorphism, $V^c$ is also an irreducible algebraic variety defined over the set $C^c$. Extending $C$ if necessary, we may assume that $V^c$ is also defined over $C$.

\begin{Nota}
For a tuple $x$ of variables or complex numbers, the expression $\bar x$ will denote another tuple of variables or complex numbers, respectively, of the same length, bearing no formal relation to the former.  This notation is meant to imply that we are particularly interested in the case where $x$ is a complex number and $\bar x$ equals $x^c$.
\end{Nota}

Throughout this subsection, let $K = \Q(\beta i)$.

\begin{Def}
  For $s \in \C^n$, we define the set
  \[
  L_s = \{ (x,\bar x) \in \C^{2n}: (x + \bar x) + \beta^{-1} i (x - \bar x) = 2s \}.
  \]
\end{Def}

\begin{Rem}
Note that $\beta^{-1} i = - (\beta i)^{-1} \in K$, hence $L_s$ is a $K$-affine subspace.
\end{Rem}

\begin{Rem} \label{LRem}
Suppose $s$ is in $\R^n$. Then, for all $x \in \C^n$, the point $(x, x^c)$ belongs to $L_s$ if and only if $x = \eps t + s$ for some $t \in \R^n$. 

To see this, let $x \in \C^n$ be given, and let $t \in \C^n$ be such that $x = \eps t + s$. Then:
\begin{align*}
(x, x^c) \in L_s &\iff (x + x^c) + \beta^{-1} i (x - x^c) = 2s\\
&\iff 2\Real(x)+ \beta^{-1} i(2i\Imag(x)) = 2s\\
&\iff (\Real(\eps t) + s) - \beta^{-1} \Imag(\eps t) = s\\
&\iff \Real(\eps t) - \beta^{-1} \Imag(\eps t) = 0\\
&\iff (\Real(t) - \beta \Imag(t)) - \beta^{-1}(\Imag(t) + \beta \Real(t)) = 0\\
&\iff (\beta + \beta^{-1}) \Imag(t) = 0\\
&\iff \Imag(t) = 0\\
&\iff t \in \R^n.
\end{align*}
\end{Rem}

\begin{Lemm} \label{lemm:littlething}
Suppose $s \in \R^n$. Then for all linearly independent $(m^1,n^1),\dots,(m^k,n^k) \in \Z^{2n}$ ($m^i,n^i \in \Z^n$), we have
\[
\dim (m,n) \cdot L_{s} \geq \frac{k}{2}.
\]
\end{Lemm}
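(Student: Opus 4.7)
The plan is to parameterize $L_s$ explicitly as a complex affine subspace of $\C^{2n}$, reduce the dimension statement to a rank inequality over $\C$, and settle that inequality by exploiting that $\{\eps,\eps^c\}$ is an $\R$-basis of $\C$.

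First I would upgrade Remark~\ref{LRem} from its real slice to all of $L_s$. Setting $x=s+\eps t$, $\bar x=s+\eps^c t$ for $t\in\C^n$, a direct computation using $\eps+\eps^c=2$ and $\eps-\eps^c=2\beta i$ verifies $(x+\bar x)+\beta^{-1}i(x-\bar x)=2s$, and a count of dimensions (the defining $n\times 2n$ system has rank $n$ over $K$, so $L_s$ has complex dimension $n$) shows this parameterization is onto:
\[
L_s \;=\; \{\,(s+\eps t,\,s+\eps^c t) : t\in\C^n\,\}.
\]
Writing $M,N$ for the $k\times n$ integer matrices with rows $m^i,n^i$, substitution gives
\[
(m,n)\cdot L_s \;=\; \{\,(M+N)s + (\eps M+\eps^c N)t : t\in\C^n\,\},
\]
so $\dim (m,n)\cdot L_s \;=\; \rank_{\C}(\eps M+\eps^c N)$. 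The task therefore reduces to showing $\rank_{\C}(\eps M+\eps^c N)\geq k/2$.

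The key step is to exploit that, since $\eps\not\in\R\cup i\R$, the pair $\{\eps,\eps^c\}$ is an $\R$-basis of $\C$; equivalently, the $\R$-linear map
\[
\phi\colon \R^{2n}\longrightarrow\C^n,\quad (m,n)\longmapsto \eps m+\eps^c n,
\]
is an $\R$-linear isomorphism. Injectivity is immediate: $\eps m+\eps^c n=(m+n)+\beta i(m-n)$ vanishes only when $m=n=0$. Now integer vectors that are $\Z$-linearly independent are also $\Q$-, hence $\R$-, linearly independent, so the images $v_i:=\phi((m^i,n^i))=\eps m^i+\eps^c n^i$, which are exactly the rows of $\eps M+\eps^c N$, are $\R$-linearly independent in $\C^n$. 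Their complex span contains their real span, which has real dimension $k$; since any complex subspace has real dimension equal to twice its complex dimension, the complex span has complex dimension at least $k/2$, yielding the required rank bound.

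The argument has no real obstacle once the correct parameterization of $L_s$ is in hand: everything hinges on the single observation that $\{\eps,\eps^c\}$ is an $\R$-basis of $\C$, which transfers the $\Z$-linear independence of the pairs $(m^i,n^i)$ to $\R$-linear independence of the images $v_i$ in $\C^n$; the factor $1/2$ in the bound is precisely the gap between complex and real dimension.
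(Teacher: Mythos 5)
Your proof is correct, but it reaches the bound $k/2$ by a genuinely different route from the paper's. Both arguments rest on the same parameterization $(x,\bar x)=(s+\eps t,\,s+\eps^c t)$ of $L_s$, but the paper keeps $t$ real and generic: it chooses $t\in\R^n$ with $\ld_K(t/D)=n$ over a countable definition set $D$, bounds $\dim (m,n)\cdot L_s$ from below by $\ld_K\bigl((m,n)\cdot(x,\bar x)/D\bigr)$, and after the change of basis $m'=m+n$, $n'=m-n$ reduces to the $K$-linear dimension of a tuple built from coordinates $t_j$ and $\beta i t_j$, which is at least $\max\{l,k-l\}\geq k/2$ precisely because $\beta i\in K$ makes $t_j$ and $\beta i t_j$ $K$-dependent. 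You instead let $t$ range over all of $\C^n$, identify $(m,n)\cdot L_s$ exactly as a translate of the column space of $\eps M+\eps^c N$, and extract the factor $1/2$ from the fact that a complex subspace has real dimension twice its complex dimension, combined with the $\R$-linear independence of the rows $\eps m^i+\eps^c n^i$ (via the $\R$-isomorphism $(m,n)\mapsto\eps m+\eps^c n$ of $\R^{2n}$ onto $\C^n$). Your version is purely linear-algebraic and gives the dimension of the image exactly rather than a lower bound at a sample point; it also avoids the paper's somewhat terse reduction to the tuple $t'$ and the matrix in $\GL_k(\Z)$. The paper's version, on the other hand, stays inside the $K$-linear-dimension formalism used throughout the proof of the Main Lemma. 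Both ultimately exploit the same dichotomy: $\eps$ and $\eps^c$ are $\R$-linearly independent, while $\eps^c/\eps\in K$.
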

\begin{proof}
Suppose $(m^1,n^1),\dots,(m^k,n^k) \in \Z^{2n}$ are linearly independent ($m^i,n^i \in \Z^n$).

Let $D$ be any countable set over which $L_s$ is defined and let $t\in \R^n$ be such that $\ld_K(t/D) = n$. For $x = \eps t + s$ and $\bar x = x^c = \eps^c t + s$, the tuple $(x, \bar x)$ is in $L_{s}$, as \ref{LRem} shows. Then, we have:
\begin{align*}
\dim (m,n) \cdot L_{s} &\geq \ld_K( (m,n) \cdot (x, \bar x)/D)\\
&= \ld_K((m^1,n^1) \cdot (x, \bar x),\dots, (m^k,n^k) \cdot (x, \bar x)/D) \\
&= \ld_K((m'^1,n'^1) \cdot (t, \beta it),\dots, (m'^k,n'^k) \cdot (t, \beta it)/D) 
\end{align*}
where $m'^i = m^i + n^i$ and $n'^i = m^i - n^i$, for all $i=1,...,k$. Since $m^i = \frac{1}{2}(m'^i + n'^i)$ and $n^i = \frac{1}{2}(m'^i - n'^i)$, the matrix $(m',n')$ has the same rank as $(m,n)$, that is $k$. Therefore we can take a matrix $M \in \GL_k(\Z)$ and $t' = (t_{j_1},\dots,t_{j_l}, \beta it_{j_{l+1}},\dots, \beta it_{j_k})$, with $1 \leq l \leq k$, such that
\[
\ld_K((m'^1,n'^1) \cdot (t, \beta it),\dots, (m'^k,n'^k) \cdot (t, \beta it)/D) = \ld_K (M \cdot t'/D).
\]
Thus,
\[
\dim (m,n) \cdot L_{s} \geq \ld_K (M \cdot t'/D).
\] 
But note that $\ld_K (M \cdot t'/D)$ is at least $\frac{k}{2}$, for we have
\begin{align*}
\ld_K (M \cdot t'/D) &= \ld_K(t'/D)\\ &\geq \max \{\ld_K(t_{j_1},\dots,t_{j_l}/D), \ld_K(\beta it_{j_{l+1}},\dots, \beta it_{j_k}/D)\}\\ &= \max \{l,k-l\} \geq \frac{k}{2}.
\end{align*}
Therefore, $\dim (m,n) \cdot L_{s} \geq \frac{k}{2}$
\end{proof}

\begin{Lemm} \label{NormKG} 
Let $s \in \R^n$. Then the pair $(L_{s},V \times V^c)$ is $K$-rotund.
\end{Lemm}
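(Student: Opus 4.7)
The plan is to reduce the $K$-rotundity of the pair $(L_s, V\times V^c)$ to two separate dimension bounds whose sum is at least $k$. By Lemma~\ref{lemm:littlething} we already have $\dim (m,n)\cdot L_s \geq k/2$ for any $k\times 2n$ integer matrix $(m,n)$ of rank $k$, so it suffices to prove the companion bound $\dim (V\times V^c)^{(m,n)} \geq k/2$. Equivalently, I aim to show that $V\times V^c$ is rotund in the sense of Definition~\ref{def-rotund} as an $n$-dimensional subvariety of $(\C^*)^{2n}$.

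Setting $r_1 = \rank m$ and $r_2 = \rank n$, the rank hypothesis gives $r_1 + r_2 \geq k$, and rotundity of $V$ and $V^c$ (via the standard reduction to full-rank factorizations over $\Z$) yields $\dim V^m \geq r_1/2$ and $\dim (V^c)^n \geq r_2/2$. The image under $(m,n)$ is the pointwise product $V^m \cdot (V^c)^n \subset (\C^*)^k$; since this product always contains a translate of each factor, whenever $\max(r_1,r_2) \geq k$ one immediately obtains $\dim (V\times V^c)^{(m,n)} \geq k/2$. The substantive case is $r_1, r_2 < k$ with $r_1 + r_2 \geq k$, which I approach by factoring $\phi_{m,n}$ as $V \times V^c \twoheadrightarrow V^m \times (V^c)^n \xrightarrow{\mu} (\C^*)^k$, with $\mu$ the multiplication map. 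The source of $\mu$ already has dimension at least $(r_1 + r_2)/2 \geq k/2$, so the issue reduces to bounding from below the image of $\mu$, or equivalently bounding from above its generic fibre.

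The generic fibre of $\mu$ has the form $V^m \cap p\cdot (V^c)^{-n}$ for generic $p$, and the main obstacle is to exclude the excessive improper intersections that could push the image dimension below $k/2$. The plan to do this is to argue that a systematic improper intersection for all $p$ in an open set would force a non-trivial monomial relation on the image $V^m \cdot (V^c)^n$: there would exist a nonzero $l \in \Z^k$ such that $y^{lm}\bar y^{ln}$ is constant on $V \times V^c$. Since $y \in V$ and $\bar y \in V^c$ are independent generic points, this forces $y^{lm}$ to be constant on $V$ and $\bar y^{ln}$ to be constant on $V^c$; rotundity of $V$ applied to the rank-one row vector $lm$ forces $lm = 0$, and symmetrically $ln = 0$, so $l\cdot (m,n) = 0$, contradicting $\rank(m,n) = k$. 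The delicate step, which is the technical heart of the argument, is the promotion of an arbitrary algebraic ``excess intersection'' to a genuine monomial relation on the ambient torus; this uses the finite-stabilizer consequence of rotundity (forcing $V$ and hence $V \times V^c$ to have only finite translation symmetries) together with the integer structure of $(m,n)$, so that no non-coset degeneration can arise.
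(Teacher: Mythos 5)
Your top-level decomposition is exactly the paper's: $K$-rotundity of $(L_s, V\times V^c)$ follows from $\dim (m,n)\cdot L_s \geq \tfrac{k}{2}$ (Lemma~\ref{lemm:littlething}) together with $\dim (V\times V^c)^{(m,n)} \geq \tfrac{k}{2}$, the latter being the statement that $V\times V^c$ is rotund as a subvariety of $(\C^*)^{2n}$. The paper simply asserts this second bound; you try to prove it, and this is where the argument breaks. Your mechanism is: if the image of the multiplication map $\mu\colon V^m\times (V^c)^n\to(\C^*)^k$ had dimension $<\tfrac{k}{2}$, the resulting ``systematic improper intersection'' would force a monomial relation $y^{lm}\bar y^{ln}=\text{const}$ on $V\times V^c$. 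That implication is not justified and is false as a general principle: a positive-dimensional generic fibre of $\mu$ (equivalently $\dim(X\cdot Y)<\dim X+\dim Y$ for $X=V^m$, $Y=(V^c)^n$) happens for trivial ambient-dimension reasons and carries no coset information, and even the refined hypothesis $\dim(X\cdot Y)<\tfrac{k}{2}$ only yields coset structure in the extremal case $\dim(X\cdot Y)=\max(\dim X,\dim Y)$, via the stabilizer lemma ($\overline{Xy}=\overline{X\cdot Y}$ for all $y\in Y$ forces $\operatorname{Stab}(\overline{X\cdot Y})\supseteq\langle YY^{-1}\rangle$). In the intermediate range $\max(\dim X,\dim Y)<\dim(X\cdot Y)<\tfrac{k}{2}$ (possible a priori whenever $\max(r_1,r_2)<k$), no translate of $\overline{X\cdot Y}$ is a coset and your argument produces no $l$, so those cases are simply not covered. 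Note also that even a positive-dimensional stabilizer gives ``$X$ is a union of cosets,'' not ``a monomial is constant on $X$,'' so the endgame would need repair as well.

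The bound you need has a short direct proof that avoids fibres altogether. Let $(b,\bar b)$ be a generic point of $V\times V^c$ over a field of definition $C$, so that $b$ and $\bar b$ are independent generics of $V$ and $V^c$. Left-multiplying $(m,n)$ by a matrix in $\GL_k(\Q)$ (which, after clearing denominators, changes the image by a finite map and preserves its dimension), arrange that the last $k-r_1$ rows of $m$ vanish, where $r_1=\rank m$; then the first $r_1$ rows of $m$ have rank $r_1$, and the rows $n^i$ for $i>r_1$ have rank $k-r_1$ because $(m,n)$ has rank $k$. By additivity of transcendence degree,
\[
\trd\bigl((b^{m^i}\bar b^{n^i})_{i\le k}/C\bigr)
= \trd\bigl((\bar b^{n^i})_{i>r_1}/C\bigr)
+ \trd\bigl((b^{m^i}\bar b^{n^i})_{i\le r_1}/C,(\bar b^{n^i})_{i>r_1}\bigr).
\]
The first term is $\geq \tfrac{k-r_1}{2}$ by rotundity of $V^c$. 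The second term is at least $\trd\bigl((b^{m^i}\bar b^{n^i})_{i\le r_1}/C\bar b\bigr)=\trd\bigl((b^{m^i})_{i\le r_1}/C\bar b\bigr)=\trd\bigl((b^{m^i})_{i\le r_1}/C\bigr)\geq\tfrac{r_1}{2}$, using interalgebraicity over $\bar b$, independence of $b$ from $\bar b$, and rotundity of $V$. Summing gives $\tfrac{k}{2}$, which is what the paper's one-line assertion encapsulates; with this substitution the rest of your proof coincides with the paper's.
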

\begin{proof}
Suppose $(m^1,n^1),\dots,(m^k,n^k) \in \Z^{2n}$ are linearly independent ($m^i,n^i \in \Z^n$).

The rotundity of $V$ implies that the variety $V \times V^c$ is also rotund. Hence $\dim(V \times V^c)^{(m,n)} \geq \frac{k}{2}$.

Also, by Lemma~\ref{lemm:littlething}, $\dim (m,n) \cdot L_{s} \geq \frac{k}{2}$. 

Therefore, we have:
\[
\dim (m,n) \cdot L_{s} + \dim(V \times V^c)^{(m,n)} \geq \frac{k}{2} +\frac{k}{2} = k.
\]
Thus, the pair $(L_{s},V \times V^c)$ is $K$-rotund.
\end{proof}

\begin{proof}[Proof of the Main Lemma] 
Consider the set
\[
L_{s^0} \cap \log(V\times V^c).
\]
It is an analytic subset of $\C^{2n}$ containing the point $(x^0,(x^0)^c)$. Since every analytic set can be written as the union of its irreducible components and this union is locally finite (\cite[Section 5.4]{Chirka}), there exist a neighbourhood $B$ of $(x^0,(x^0)^c)$, a positive integer $l$ and irreducible analytic subsets $S_1,\dots, S_l$ of $B$ containing $(x^0,(x^0)^c)$ such that
\[
L_{s^0} \cap \log(V\times V^c) \cap B = S_1 \cup \dots \cup S_l.
\]
We may assume $B$ is a box with rational end-points.

\begin{Claim}
Every $S_i$ has complex analytic dimension $0$.
\end{Claim}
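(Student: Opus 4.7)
The plan is to argue by contradiction. I would assume some $S_i$ has complex analytic dimension $\geq 1$ and parameterise it by a non-constant holomorphic germ $\phi : D \to S_i$ with $\phi(0) = (x^0,(x^0)^c)$; writing $\phi(\tau) = (x(\tau), \bar x(\tau))$, $y(\tau) = \exp x(\tau)$, $\bar y(\tau) = \exp \bar x(\tau)$, I work in the differential field $\F$ of germs of meromorphic functions at $0$, with field of constants $\mathcal{C} \supseteq \C$. Setting $u = (x,\bar x) \in L_{s^0}(\F)$, $v = (y,\bar y) \in (V\times V^c)(\F)$, and $k = \ld_\Q(Du_1,\dots,Du_{2n}) \geq 1$, I choose $M \in \Mat_{2n}(\Z) \cap \GL_{2n}(\Q)$ so that $u' = Mu$ has $Du'_1,\dots,Du'_k$ $\Q$-linearly independent while $u'_{k+1},\dots,u'_{2n}$ are constants, and set $v' = v^M$. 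Letting $m$ denote the last $2n-k$ rows of $M$ (rank $2n-k$), the constant coordinates take values $a = m\cdot(x^0,(x^0)^c) \in m\cdot L_{s^0}$ and $b = (y^0,(y^0)^c)^m \in (V\times V^c)^m$, with $b = \exp a$.

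Next I fix a finite $B \subset \C$ containing $C$, $C^c$, $\beta$, $i$ and the coordinates of $s^0$, and argue that $(x^0,(x^0)^c,y^0,(y^0)^c)$ is algebraically generic in its Zariski locus over $B$, with $\trd_B(x^0,(x^0)^c,y^0,(y^0)^c) = n$. This follows from the $\RR$-genericity of $(s^0,t^0)$ in $\X$, the inequality $\dim_{\RR} \leq \trd$ for real tuples, the $\R$-linear bijection $(s,t) \mapsto s+(1+\beta i)t$, and the fact that $(y^0,(y^0)^c) = \exp(x^0,(x^0)^c)$ is determined. Letting $W^*$ be the Zariski closure of the tuple over $B$, this is an irreducible subvariety of $L_{s^0}\times(V\times V^c)$ of dimension $n$; by analyticity of $\phi$, $(u,v) \in W^*(\F)$ and therefore $\trd_B(u,v) = n$ in $\F$.

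The key dimension count comes from the $K$-rotundity of $(L_{s^0}, V\times V^c)$ (Lemma~\ref{NormKG}). The projection $\pi_m : L_{s^0}\times(V\times V^c) \to (m\cdot L_{s^0})\times(V\times V^c)^m$, acting linearly on the first factor and multiplicatively on the second, has generic fibre of dimension $(n - \dim m\cdot L_{s^0}) + (n - \dim (V\times V^c)^m) \leq k$, the inequality being exactly $K$-rotundity applied to $m$ of rank $2n-k$. Restricting to $W^*$ only shrinks the fibre, so $\dim \pi_m(W^*) \geq n - k$; as $(a,b)$ is generic in $\pi_m(W^*)$ over $B$, this gives $\trd_B(a,b) \geq n-k$. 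Subadditivity and $\mathcal{C} \supseteq B\cup\{a,b\}$ then yield
\[
\trd_{\mathcal{C}}(u'_1,\dots,u'_k,v'_1,\dots,v'_k) \leq \trd_B(u,v) - \trd_B(a,b) \leq k.
\]
However, Ax's Theorem (SD) from \cite{Ax71}, applied to $u'_1,\dots,u'_k$ (whose derivatives are $\Q$-linearly independent) with $v'_i = \exp u'_i$, forces $\trd_{\mathcal{C}}(u'_1,\dots,u'_k,v'_1,\dots,v'_k) \geq k+1$, which is the desired contradiction.

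The main obstacle I expect is the genericity step of the second paragraph: faithfully converting the $\RR$-genericity of $(s^0,t^0)$ into complex-algebraic genericity of $(x^0,(x^0)^c,y^0,(y^0)^c)$ over a finite $B$. One must choose $B$ closed under complex conjugation and containing all relevant real and complex parameters, and carefully match the o-minimal dimension in $\RR$ with the transcendence degree in the complex field under the $\R$-linear change of variables given by $\eps = 1+\beta i$. Beyond this, the argument is a direct transposition of the Zilber--Ax template from the proof of Lemma~\ref{CCPexp}, with the $K$-rotund pair $(L_{s^0},V\times V^c)$ playing the role of the $\ex$-rotund variety $W$.
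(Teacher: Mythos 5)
Your overall strategy is a genuine alternative to the paper's: the paper does not parameterise $S_i$ by a disc at all, but instead uses the Baire Category Theorem to produce uncountably many points of $S_i$ mapping to generics of $V\times V^c$, invokes the Countable Closure Property of $\cl_K$ (itself proved via Ax) to find one such point with $\delta_K(x,\bar x/D')>0$, and turns that positivity into a violation of $K$-rotundity. You instead inline the Ax argument, transposing the proof of Lemma~\ref{CCPexp}. This is viable in principle and would make the Claim independent of Lemma~\ref{CCP-K} and of Kirby's countable strong set; but as written it has a genuine gap in the transcendence-degree bookkeeping.

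The gap is your second paragraph. The assertion $\trd_B(x^0,(x^0)^c,y^0,(y^0)^c)=n$ is unjustified and in general false: $\RR$-genericity of $(s^0,t^0)$ together with $\dim_{\RR}\leq\trd$ gives only a \emph{lower} bound on transcendence degree, and the relation $y^0=\exp(x^0)$ is analytic, not algebraic, so "$(y^0,(y^0)^c)$ is determined" imposes no upper bound; the tuple lies in $L_{s^0}\times(V\times V^c)$, so all one gets for free is $\trd_B\leq 2n$. (Worse, since $B$ contains the coordinates of $s^0$, even the lower bound $\geq n$ over $B$ is not immediate.) Relatedly, $(u,v)\in W^*(\F)$ does not follow from analyticity: a germ through a point need not lie in the Zariski closure of that single point over $B$ — it lies in its own locus $Z\supseteq W^*$, which may have strictly larger dimension (compare $\tau\mapsto(\tau,e^\tau)$ through $(0,1)$). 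With these two claims removed, your chain $\trd_{\mathcal C}(u'_{1..k},v'_{1..k})\leq\trd_B(u,v)-\trd_B(a,b)\leq k$ loses its justification. The repair is to work with $Z$ rather than $W^*$: the difference $\trd_B(u,v)-\trd_B(a,b)$ is exactly the dimension of the fibre of $\pi_m|_Z$ over $(a,b)$, which is at most $(n-\dim m\cdot L_{s^0})+(n-\dim(V\times V^c)^m)\leq k$ by $K$-rotundity — \emph{provided} the fibre of $(\ )^m$ on $V\times V^c$ over $b$ has the generic dimension. That is precisely where one must use that $(y^0,(y^0)^c)$ is a generic point of $V\times V^c$ over $C\cup C^c$ (which the paper derives from $\dim_{\RR}(y^0)=n$, and which your write-up never establishes or invokes). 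With that ingredient added and the bogus "$=n$" claims deleted, your Ax-based route goes through.
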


Before proving the claim, let us show how the lemma follows. The claim implies that each $S_i$ is a closed discrete subset of $B$; since $B$ is bounded, each $S_i$ must then be finite. Being the union of the $S_i$, the set $L_{s^0} \cap \log(V\times V^c) \cap B$ is therefore finite, and it is clearly $\RR$-definable over $s^0$. Thus, the singleton $\{(x^0,(x^0)^c)\}$ is $\RR$-definable over $s^0$ as the intersection of $L_{s^0} \cap \log(V\times V^c) \cap B$ and a sufficiently small $\RR$-definable open box around $(x^0,(x^0)^c)$. Therefore $\dim_{\RR}(s^0) = \dim_{\RR}(x^0) = n$.

\begin{proof}[Proof of the claim]
Suppose towards a contradiction that there exists $i$ such that the set $S:=S_i$ is of positive dimension.
  
Let us show that there are uncountably many points in $S$ whose image under exponentiation is a generic point of $V \times V^c$ over $C$. 

To see this, suppose $V'$ is a proper subvariety of $V\times V^c$ over $C$. Note that $(y^0,(y^0)^c)$ is a generic point of $V \times V^c$ over $C$, for we have
\[
\trd(y^0,(y^0)^c/C) = \trd(\Real(y^0),\Imag(y^0)/C) \geq \dim_{\RR}(\Real(y^0),\Imag(y^0)) = n = \dim V \times V^c.
\]
Hence $(x^0,(x^0)^c)$ does not belong to $\log V'$, and therefore $S \cap \log V'$ is an analytic subset of $B$ properly contained in $S$.  Then, by the irreducibility of $S$, for any such $V'$, $S\cap \log V'$ is nowhere-dense in $S$. Since $S$ has positive dimension we can apply the Baire Category Theorem to conclude that there exist uncountably many $(x,\bar x)$ in $S$ that do not belong to $\log V'$ for any such $V'$, i.e. their images under exponentiation are generic points of $V \times V^c$ over $C$. 

Let $D$ be a countable strong subset of $\C$ with respect to $\delta_K$ (provided by Remark~\ref{rem-kirby-K}). Let $D'$ be the strong closure of $\log C \cup s^0$ with respect to $(\delta_K)_D$. 


For any tuple $z \subset \C^*$, if $\delta_K(z/D') \leq 0$ then all the coordinates of $z$ lie in $\cl_K(D')$. But $\cl_K(D')$ is countable, because $D'$ is countable and $(\cl_K)_D$ has the Countable Closure Property, so there can be no more than countably many tuples $z$ with $\delta_K(z/D') \leq 0$.
Thus, we can find $(x, \bar x)\in L_{s^0}$ such that $(\exp(x), \exp (\bar x))$ is a generic point of $V \times V^c$ over $C$ and $\delta_K(x, \bar x/D') > 0$. 

Then:
\begin{equation} \label{DeltaPosG} 
0 < \delta_K(x\bar x/D') \leq \dim L_{s^0} \cap N + \dim (V\times V^c) \cap \exp N - \dim N,
\end{equation}
where $N$ is the minimal $\Q$-affine subspace over $D'$ containing the point $(x, \bar x)$.

Since $\dim L_{s^0} = \dim (V\times V^c) = n$, it immediately follows from the inequality above that $N$ cannot be the whole of $\C^{2n}$, as in that case the right hand side would be $0$. Therefore $\dim N < 2n$.

Thus, there exist $k \geq 1$ and linearly independent $m^1,\dots,m^k \in \Z^{2n}$ such that $N$ is a translate of the subspace of $\C^{2n}$ defined by the system of equations $m^i \cdot (z,\bar z) = 0$ ($i=1,\dots,k$). Note that $\dim N = 2n -k$.

Note that $(V \times V^c) \cap \exp N$ is a generic fibre of the map $(\ )^m$ on $(V \times V^c)$, for it contains the generic point $(y, \bar y)$ of $V \times V^c$ over $C$. The addition formula for the dimension of fibres of algebraic varieties then gives
\[
\dim (V \times V^c)^m = \dim V \times V^c - \dim (V \times V^c) \cap \exp N.
\]
Also, by the addition formula for the dimension of $K$-affine subspaces,
\[
\dim m \cdot L_{s^0} = \dim L_{s^0} - \dim L_{s^0} \cap N.
\]

Adding up the two equations,
\[
\dim m \cdot L_{s^0} + \dim (V \times V^c)^m = 2n - (\dim L_{s^0} \cap N + \dim (V \times V^c) \cap \exp N)
\]
Using \eqref{DeltaPosG} we get
\[
\dim m \cdot L_{s^0} + \dim (V \times V^c)^m < 2n - \dim N = k.
\]
This implies that the pair $(L_{s^0},V \times V^c)$ is not $K$-rotund, contradicting \ref{NormKG}.
\end{proof}
\end{proof}

\subsection{The question of $\omega$-saturation} \label{sec:omegasat}

A natural question which we are unable to answer is whether the model $(\C^*,G)$ is $\omega$-saturated. Here we present two remarks on the issue.

First we show that, assuming the unproven \emph{CIT with parameters} (Conjecture~\ref{cit-param} below), we can prove an a priori stronger version of Proposition~\ref{prop:green2} which is implied by $\omega$-saturation. In fact, if the CIT with parameters holds, then a stronger form of the EC-property holds in all models of $T$.

The following is the statement of the CIT with parameters, it is a consequence of the \emph{Conjecture on Intersections with Tori (CIT)} (Conjecture~1 in \cite{ZCIT}). For details, see Theorem 1 in \cite{ZCIT}.

\begin{Conj}[CIT with parameters] \label{cit-param}
For every $k\geq 0$, every subvariety $W(x,y)$ of $(\C^*)^{n+k}$ defined over $\Q$ and every $c \in (\C^*)^k$, there exists a finite collection $H_1,\dots,H_s$ of cosets of proper algebraic subgroups of $(\C^*)^n$ with the following property:\\ for every coset $H$ of a proper algebraic subgroup of $(\C^*)^n$, if $S$ is an \emph{atypical component} of the intersection of $W(x,c)$ and $H$ (i.e. $\dim S > \dim W(x,c) + \dim H - n$), then for some $i \in \{1,\dots,s\}$, $S$ is contained in $H_i$. 
\end{Conj}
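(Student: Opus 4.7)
The plan is to derive the CIT with parameters from the basic Conjecture on Intersections with Tori (CIT, Conjecture~1 of \cite{ZCIT}), following the strategy indicated in the text. The core idea is to lift the parametric statement about slices $W(x,c) \subset (\C^*)^n$ to a non-parametric statement about $W$ itself inside the higher-dimensional ambient torus $(\C^*)^{n+k}$.

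Given $W(x,y) \subset (\C^*)^{n+k}$ defined over $\Q$ and $c \in (\C^*)^k$, I would apply the basic CIT to $W$, obtaining a finite list $H'_1,\ldots,H'_m$ of cosets of proper algebraic subgroups of $(\C^*)^{n+k}$ that captures all atypical components of intersections of $W$ with subgroup cosets in $(\C^*)^{n+k}$. From this I extract the desired collection in $(\C^*)^n$ as follows: let $T_c$ be the smallest coset of an algebraic subgroup of $(\C^*)^k$ containing $c$, and for each $i$ take the projection to the first $n$ coordinates of $H'_i \cap ((\C^*)^n \times \{c\})$, retaining only those that are cosets of proper algebraic subgroups of $(\C^*)^n$. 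Call the resulting finite collection $H_1,\ldots,H_s$.

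The central verification is a transfer of atypicality. Suppose $S$ is an atypical component of $W(x,c) \cap H$, where $H = a \cdot A$ with $A$ a proper algebraic subgroup of $(\C^*)^n$. The subset $S \times T_c \subset W \cap (H \times T_c)$ is a component in $(\C^*)^{n+k}$; a direct dimension count, using the Zariski-genericity of $c$ in $T_c$ to identify $\dim W(x,c)$ with the generic fibre dimension of the projection $W \cap ((\C^*)^n \times T_c) \to T_c$, translates the atypicality inequality $\dim S > \dim W(x,c) + \dim H - n$ into atypicality of $S \times T_c$ in $(\C^*)^{n+k}$. By CIT, $S \times T_c \subset H'_i$ for some $i$. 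A short structural argument then shows that any coset of a proper subgroup of $(\C^*)^{n+k}$ containing a full product $S \times T_c$ with $S$ non-empty must itself split as $H_i \times T_c$ with $H_i$ a coset of a proper subgroup of $(\C^*)^n$, yielding the desired containment $S \subset H_i$.

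The main technical obstacle is the atypicality transfer when $c$ fails to sit in the generic stratum of the projection $\pi_2\colon W \to (\C^*)^k$: the fibre-dimension equality underpinning the transfer can fail, so one must stratify $(\C^*)^k$ by fibre dimension, replace $W$ by its intersection with the appropriate stratum, and recurse. A secondary subtlety is that CIT is customarily stated for varieties over $\Q$, whereas $T_c$ is defined only over $\Q(c)$; this is absorbed by applying CIT directly to $W$ (which is over $\Q$) and letting $T_c$ enter only on the geometric side of the argument. Of course, the ultimate obstacle is that CIT itself is famously open, so the final statement is necessarily conditional on CIT.
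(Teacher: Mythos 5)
First, a point of comparison: the paper does not prove this statement. It is presented as a \emph{conjecture}, quoted as a consequence of CIT with the derivation deferred entirely to Theorem~1 of \cite{ZCIT}. So you were reconstructing that derivation, and your overall strategy --- apply CIT to $W$ inside the big torus $(\C^*)^{n+k}$ and push the resulting cosets down to the fibre over $c$ --- is the natural and presumably the intended one. The execution, however, has two genuine gaps. The first is the asserted inclusion $S \times T_c \subseteq W$, which is false: $S \subseteq W(x,c)$ only gives $S \times \{c\} \subseteq W$, and $W$ has no reason to contain the translates of $S$ in the $T_c$-direction. (If instead you mean $T_c$ to be the smallest coset of an algebraic subgroup containing the single point $c$, that is just $\{c\}$, and then the appeal to "Zariski-genericity of $c$ in $T_c$" and the "structural splitting $H'_i = H_i \times T_c$" are vacuous.) The correct object is $S \times \{c\}$, viewed inside the coset $H \times \{c\}$ of the proper subgroup $A \times \{1\}$; and then the atypicality transfer is immediate for \emph{every} $c$, with no stratification or recursion needed: by the fibre dimension theorem, $\dim W(x,c) \geq \dim W - k$ whenever the fibre is nonempty, hence
\[
\dim S \; > \; \dim W(x,c) + \dim H - n \; \geq \; \dim W + \dim\bigl(H \times \{c\}\bigr) - (n+k).
\]
Non-generic $c$ only makes the fibre larger, which makes the upstairs intersection \emph{more} atypical, so the worry you flag as the "main technical obstacle" is not one.

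The serious gap is in the extraction step. Granting $S \times \{c\} \subseteq H'_i = g_i B_i$ with $B_i$ a proper algebraic subgroup of $(\C^*)^{n+k}$, the slice $\{x : (x,c) \in H'_i\}$ is a coset of $B_i' := \{x : (x,1) \in B_i\}$, and $B_i'$ can perfectly well be all of $(\C^*)^n$ --- take $B_i = (\C^*)^n \times B''$ with $B''$ proper in $(\C^*)^k$. In that case the containment you obtain is vacuous, and your clause "retaining only those that are cosets of proper algebraic subgroups of $(\C^*)^n$" silently discards exactly the cases in which the argument yields nothing; the corresponding atypical components $S$ are then not captured by your list $H_1,\dots,H_s$, so the conclusion of the conjecture is not established. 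Repairing this needs an additional idea --- for instance an induction in which, when $B_i'$ is improper, one replaces $W$ by the components of $W \cap H'_i$ (which have smaller dimension, or live in a proper coset where one can quotient down) and reapplies the argument. That induction is where the real content of \cite[Theorem~1]{ZCIT} lies, and it is missing from your sketch.
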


\begin{Prop}
Assume the CIT with parameters. Then every model of $T$ has the following \emph{strong EC-property}:
for any rotund variety $V\subset (K^*)^n$ of dimension $\frac{n}{2}$ defined over a finite set $C$, there exists a generic of $V$ over $C$ in $G^n$.
\end{Prop}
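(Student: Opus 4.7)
The plan is to combine the EC-property (already present in $T$) with the CIT with parameters to pin down the proper subvarieties of $V$ that a non-generic point of $V \cap G^n$ could lie in, thereby reducing the problem to finitely many explicit coset conditions. Let $(A,G)\models T$ be arbitrary. First enlarge $C$ to a finite-dimensional $\spank$-closed strong subset $C^*$ of $(A,G)$ (possible by \cite[Lemma~2.9]{CayGreenTh}); since $C\subseteq C^*$, any point generic in $V$ over $C^*$ is \emph{a fortiori} generic over $C$. Apply Conjecture~\ref{cit-param} to $V$, regarded as the fibre of an appropriate $\Q$-definable family with parameters in $C^*$; this produces cosets $H_1,\dots,H_s$ of proper algebraic subgroups of $(K^*)^n$ such that every atypical component of $V\cap H$, for $H$ any coset of a proper algebraic subgroup, is contained in some $H_i$.

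Rotundity of $V$ forces $V\not\subseteq H_i$ for every $i$: were $V$ contained in a proper coset $\{y:y^m=a\}$ for some nonzero $m\in\Z^n$, the $1\times n$ matrix $M=(m)$ would give $\dim M\cdot V=0<\frac{1}{2}$, contradicting the definition of rotundity. Hence each $V\cap H_i$ is a proper subvariety of $V$, and so is $V^{\ast\ast}:=\bigcup_{i=1}^s(V\cap H_i)$ (by irreducibility of $V$). By the EC-property of $(A,G)$, there exists $g\in (V\setminus V^{\ast\ast})\cap G^n$.

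It remains to show that this $g$ is generic in $V$ over $C^*$. Suppose, for contradiction, that $W:=\locus(g/C^*)$ is a proper subvariety of $V$, so $\dim W<\frac{n}{2}$. Strongness of $C^*$ together with $g\in G^n$ yields the predimension inequality $\delta(g/C^*)\geq 0$, which gives
\[
\ld(g/C^*\cap G)\leq 2\trd(g/C^*)=2\dim W<n.
\]
Therefore $g$ lies in a coset $H$ of a proper algebraic subgroup of $(K^*)^n$, defined over $C^*\cap G$, with $\dim H=\ld(g/C^*\cap G)\leq 2\dim W$. Letting $S$ be the irreducible component of $V\cap H$ containing $g$, we have $W\subseteq S$, so
\[
\dim S\geq\dim W>2\dim W-\tfrac{n}{2}\geq\dim H-\tfrac{n}{2}=\dim V+\dim H-n,
\]
where the strict inequality uses $\dim W<\frac{n}{2}$. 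Hence $S$ is an atypical component of $V\cap H$, and by CIT, $S\subseteq H_i$ for some $i$, contradicting $g\in S\subseteq V\cap H_i\subseteq V^{\ast\ast}$.

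The principal obstacle is the central predimension bookkeeping: one must extract from $\delta(g/C^*)\geq 0$ not merely a bound on $\ld(g/C^*\cap G)$ but the existence of a coset of a proper algebraic subgroup over a base over which CIT can be invoked, which requires verifying that the multiplicative dependences of $g$ modulo $C^*\cap G$ really do exhibit $g$ as a point of a coset of dimension exactly $\ld(g/C^*\cap G)$. The remainder of the argument is a clean assembly of the EC-property, the geometric consequence of rotundity, and the statement of Conjecture~\ref{cit-param}.
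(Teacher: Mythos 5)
Your proof is correct and follows essentially the same route as the paper: apply the CIT with parameters to obtain the cosets $H_1,\dots,H_s$, set $V'=V\cap\bigcup_i H_i$, use the predimension inequality over a strong base to place any non-generic point of $V\cap G^n$ in an atypical component of $V\cap H$ for a coset $H$ determined by its multiplicative dependences, and finish with the EC-property. Your explicit check via rotundity that $V\not\subseteq H_i$, so that $V\cap\bigcup_i H_i$ is a proper subvariety, is a detail the paper leaves implicit.
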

\begin{proof}
Let $(K^*,G)$ be a model of $T$. 
Let $V$ be a rotund variety of dimension $\frac{n}{2}$ defined over a finite set $C$.

It is sufficient to find a proper subvariety $V'$ of $V$ such that for any $y \in V \cap G^n$, if $y$ does not lie in $V'$ then $y$ is a generic point of $V$ over $C$. Indeed, that $(K^*,G)$ satisfies the EC-property guarantees that we can find a point $y \in (V \setminus V') \cap G^n$, and $y$ would then be a generic point of $V$ over $C$.

Without loss of generality we assume that $C$ is strong in $\A$. Then for any $y\in G^n$ $\delta_G(y/C) \geq 0$. In particular, for any $y$ in $V \cap G^n$, if $y$ is not a generic point of $V$ over $C$ then $y$ has to be multiplicatively dependent over $C$. 
Thus, it is enough to find a proper subvariety $V'$ of $V$ over $C$ such that for every $y \in G^n \cap V$, if $y$ is multiplicatively dependent over $C$ then $y$ is in $V'$.

By the CIT with parameters, there exist cosets $H_1,\dots,H_s$ of proper algebraic subgroups of $(K^*)^n$ such that any atypical irreducible component of the intersection of $V$ and a coset of a proper algebraic subgroup of $(K^*)^n$ is contained in some $H_i$. 

Let $V' = V \cap \bigcup_i H_i$. We shall now show that $V'$ has the required property. Suppose $y \in G^n \cap V$ is multiplicatively dependent over $C$, and let us see that $y$ then belongs to $V'$. Let $H$ be the smallest coset of a proper algebraic subgroup that is defined over $C$ and contains $y$. Let $c_H$ denote the codimension of $H$, then $c_H \geq 1$.

Let $Y$ be an irreducible component of $V\cap H$ containing $y$. Since $V\cap H$ is defined over $C$, $Y$ is defined over $\Q(C)\alg$. Then, by the predimension inequality over $C$, $2\dim Y - \dim H \geq 0$. Therefore we have
\[
2\dim Y \geq \dim H = n - c_H = 2\dim V - c_H,
\]
and consequently,
\[
\dim Y \geq \dim V - \frac{1}{2}c_H > \dim V - c_H.
\]
Hence $\dim Y > \dim V - c_H$, which means that $Y$ is an atypical irreducible component of the intersection $V\cap H$.

Indeed, the CIT with parameters then tells us that $y$ must belong to one of the $H_i$, and thus to $V'$.
\end{proof}


In the following proposition, we freely use notions and facts from \cite{CayGreenTh}.

\begin{Prop}
Assume the CIT with parameters. Suppose $(K^*,G)$ is a model of $T$ where $G$ has infinite dimension for the dimension function associated to $\delta$. Then $(K^*,G)$ is $\omega$-saturated.
\end{Prop}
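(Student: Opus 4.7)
The plan is to realise, in $(K^*,G)$, every complete $L_{X_0}$-type $p(x_1,\dots,x_n)$ over every finite parameter set $A \subset K^*$; since $T$ is $\omega$-stable, this is equivalent to $\omega$-saturation. After enlarging $A$---adjoining the image of $X_0$ and passing to the strong closure, which remains finite-dimensional by the facts about $\scl$ recalled in Section~\ref{sec:axioms}---we may assume that $A$ is $\spank$-closed and strong in $(K^*,G)$, and view it as a structure in $\CC_0$ in its own right.

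By the analysis of types in models of $T_{X_0}$ carried out in \cite{CayGreenTh}, each complete type $p$ over such an $A$ is determined by the isomorphism type over $A$ of the strong substructure $\scl(A\bar b) \in \CC_0$ generated by $A$ together with any realisation $\bar b$ of $p$. Hence realising $p$ in $(K^*,G)$ amounts to embedding, over $A$, a specific finite-dimensional strong extension $A \subseteq B$ in $\CC_0$ into $(K^*,G)$.

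We build this embedding in two stages. First, we handle the ``generic'' part of $B$ over $A$, namely those coordinates of $\bar b$ which are $\cl$-independent from $A$: white generic coordinates are realised by any elements of $K^*$ transcendental over $A$, while green generic coordinates are realised by elements of $G$ that are $\cl$-independent from $A$ and from each other. The latter exist in any prescribed finite number because, by hypothesis, $G$ has infinite dimension with respect to $\dd$, so $G$ contains arbitrarily long $\cl$-independent sequences, and only finitely many elements of such a sequence can lie in the $\cl$-closure of a given finite set. Second, we treat the ``non-generic'' part, i.e.\ the subextension of $B$ consisting of elements in the $\cl$-closure of $A$ together with the generic part already realised; this decomposes along a filtration into minimal (predimension-zero) strong extensions, each of which is presented by a generic point of a rotund subvariety of $(K^*)^m$ of dimension exactly $m/2$ with all coordinates green. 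Each such minimal step can be realised inside $(K^*,G)$ by the strong EC-property established in the preceding proposition, which is precisely where the CIT with parameters enters.

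The main obstacle is to justify the decomposition of an arbitrary finite-dimensional strong extension $A \subseteq B$ into a generic stage followed by a chain of predimension-zero minimal strong steps, and, more delicately, to iterate the strong EC-property so as to control the isomorphism type of each successive minimal step over the parameters accumulated so far---the strong EC-property as stated only guarantees \emph{some} generic realisation of the prescribed rotund variety of half dimension, and not a realisation of a prescribed type over the previous stage. This reduction requires a careful use of the amalgamation and genericity machinery of \cite{CayGreenTh}; once it is in place, the realisation of $p$ in $(K^*,G)$ follows, completing the proof of $\omega$-saturation.
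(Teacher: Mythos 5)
Your overall strategy is the right one — reduce $\omega$-saturation to realising finite-dimensional strong extensions, split those into generic and prealgebraic parts, and use the strong EC-property (hence CIT) for the prealgebraic steps and the infinite dimension of $G$ for the green generic steps. This is essentially the paper's route, except that the paper does not redo the reduction: it invokes \cite[Proposition~3.31]{CayGreenTh}, which says that a \emph{rich} model is $\omega$-saturated, so all that remains is to verify richness for the three kinds of minimal extensions (prealgebraic, green generic, white generic). The decomposition and the control of isomorphism types across successive minimal steps — precisely the point you flag as "the main obstacle" and leave unresolved — is exactly what that citation supplies. As written, your argument therefore has an acknowledged hole at its core, but it is a hole that can be closed by quoting the right result rather than by new mathematics.

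The genuine error is in your treatment of the white generic case. You claim that "white generic coordinates are realised by any elements of $K^*$ transcendental over $A$". This is false: a white generic element $b$ over a finite strong set $C$ must satisfy $\dd_G(b/C) = 2$, and transcendence over $C$ only gives $\delta_G(b/C) \leq 2$, not $\dd_G(b/C) = 2$. For instance, a green element $b$ transcendental over $C$ has $\delta_G(b/C) = 2 - 1 = 1$, so $\dd_G(b/C) \leq 1$; more generally, $b$ could lie in $\cl_{\dd}(C)$ via a longer tuple witnessing a predimension drop, and nothing in the hypotheses rules out that \emph{every} element of $K^*$ does so (we are in an abstract model of $T$, with no CCP available). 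The paper gets around this by \emph{constructing} a white generic: take $b_1, b_2 \in G$ with $\dd_G(b_1/C) = \dd_G(b_2/Cb_1) = 1$ (possible since $\dd_G(G)$ is infinite) and set $b = b_1 + b_2$; a short computation shows $\delta_G(b_1,b_2/C,b_1+b_2) = 2\cdot 1 - 2 = 0$, using rotundity of the variety $X + Y = b_1 + b_2$, whence $\dd_G(b/C) = \dd_G(b_1,b_2/C) = 2$. You need some such argument; without it the white generic minimal extensions — and hence richness — are not established.
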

\begin{proof}
In the light of \cite[Proposition~3.31]{CayGreenTh}, it is sufficient to show that $(\K^*,G)$ is \emph{rich}.

By the previous lemma, the CIT assumption implies that $(K^*,G)$ satisfies the the strong EC-property, which means that the richness property holds for \emph{prealgebraic minimal extensions}.

The assumption on the dimension of $G$ implies that that the richness property also holds for \emph{green generic minimal extensions}. This amounts to proving that for any finite strong subset $C$ of $K^*$, there exists $b \in K^*$ with $\dd_G(b/C) = 1$. But this is clear since $\dd_G(G)$ is infinite and $C$ is finite.

For \emph{minimal white generic extensions} we need to find, for any $C$ as before, an element $b \in K^*$ with $\dd_G(b/C) = 2$. We proceed by taking $b_1$ and $b_2$ with $\dd_G(b_1/C) = \dd_G(b_2/C b_2) = 1$ and setting $b = b_1 + b_2$.

It is sufficient to show that $\delta_G (b_1,b_2/C,b_1 + b_2)= 0$. Indeed, we then get
\[
0 \leq \dd_G(b_1,b_2/C,b_1 + b_2) \leq \delta_G (b_1,b_2/C,b_1 + b_2) = 0,
\]
so $\dd_G(b_1,b_2/C,b_1 + b_2) = 0$, and hence $\dd_G(b_1+b_2/C) = \dd_G(b_1,b_2/C) = 2$.

Now the calculation of $\delta_G (b_1,b_2/C,b_1 + b_2)$: By definition,
\[
\delta_G(b_1,b_2/C,b_1 + b_2) = 2 \trd(b_1,b_2/C,b_1 + b_2) - \md(b_1,b_2/C,b_1 + b_2).
\]
It is easy to see that
\[
\trd(b_1,b_2/C,b_1 + b_2) = 1.
\]
Also,
\[
\md(b_1,b_2/C,b_1 + b_2) = 2,
\]
because the variety defined by the equation $X+Y = b_1+b_2$ is rotund. Thus, $\delta_G (b_1,b_2/C,b_1 + b_2) = 2 (1) - 2 = 0$.
\end{proof}

Unfortunately, it is not clear that the dimension $\dd_G(G)$ is infinite in our model $(\C^*,G)$. Note that this would immediately follow if one could show that the corresponding pregeometry on the uncountable set $G$ has the CCP.

\subsection{Emerald points} \label{sec-eme}


In \cite{ZNCG}, a connection is established between the construction of noncommutative tori, which are basic examples of non-commutative spaces, and the model theory of the expansions of the complex field by a multiplicative subgroup of the form
\[
H = \exp(\eps \R + q \Z),
\]
where $\eps = 1 + i \beta$ and $\beta$ and $q$ are non-zero real numbers such that $\beta q$ and $\pi$ are $\Q$-linearly independent. 

In order to prove that such structures are superstable, in \cite[Section 5]{CayGreenTh}, a variant of the theories of green points was considered in which the distinguished subgroup is not divisible, but elementarily equivalent to the additive group of the integers instead. The modified theories were named \emph{theories of emerald points} and shown to be superstable. It remained to show that the above structures are in fact models of the constructed theories. 

In this subsection we remark that the arguments in this section for the green case also yield the fact that the above structures are indeed models of the theories of emerald points (after adding constants for the elements of a strong set), provided the Schanuel Conjecture for raising to powers in $K=\Q(\beta i)$ holds.


\begin{Thm} \label{theorem:emerald2}
Let $\beta$ and $q$ non-zero real numbers such that $\beta q$ and $\pi$ are $\Q$-linearly independent. Let $\eps = 1 + \beta i$ and 
\[
H = \exp(\eps \R + q\Z).
\]
Assume SC$_K$ holds for $K = \Q(\beta i)$. Then:
\begin{enumerate}
\item For every tuple $c \subset \C^*$, there exists a tuple $c' \subset \C^*$ extending $c$, such that $c'$ is strong with respect to the predimension function $(\delta_H)_{c'}$.
\item The structure $(\C^*,H)$ has the EC-property. Therefore, for every tuple $c \subset \C^*$, strong with respect to $(\delta_H)_c$, the structure $(\C^*,H)_{X_0}$ is a model of the theory $T = T_{X_0}$ from \cite[Section 5]{CayGreenTh}, where $X_0 = \spank(c)$ with the structure induced from $(\C^*,H)$.
\end{enumerate} 
\end{Thm}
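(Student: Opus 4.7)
The plan is to mirror the proof of Theorem~\ref{theorem:green}, with $Q = q\Z$ replacing the divisible finite-rank group used there.

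For part~(1), I would simply repeat the arguments of Lemma~\ref{lemma:predim-ineq} and Proposition~\ref{prop:green1}. Their only use of $Q$ was through $\ld_{\Q} Q < \infty$, and here $\ld_{\Q}(q\Z) = 1$, so the same computation yields $\delta_H(y) \geq -3 - \trd(K)$ for every tuple $y$. The minimising-then-strong-closure construction of Proposition~\ref{prop:green1} then produces the desired $c'$.

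For part~(2), I would follow Subsection~\ref{subsection:proof2}. Fix a rotund $V \subset (\C^*)^n$ of dimension $n/2$ over a finite $C$ and a proper subvariety $V' \subsetneq V$, and form the $\RR$-locally-definable set $\X = \{(s,t) \in \R^{2n} : \exp(\eps t + s) \in V\}$, where $\RR$ is extended to include a constant for $q$. The Main Lemma~\ref{MainLemma} and Lemma~\ref{ImplicitG} transfer verbatim: their proofs invoke only $\eps$, SC$_K$, the $K$-rotundity of $(L_{s^0}, V \times V^c)$, and the CCP for $\cl_K$, none of which depend on $Q$. Thus for any $\RR$-generic $(s^0, t^0) \in \X$ with $y^0 \in V \setminus V'$, there is an open $S' \subset \R^n$ containing $s^0$ and a continuous $\RR$-definable map $t : S' \to \R^n$ with $y(s) = \exp(\eps t(s) + s) \in V \setminus V'$ for all $s \in S'$.

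The substantial obstacle is the concluding density step. In the green case, denseness of $Q^n$ in $\R^n$ immediately furnished $q \in S' \cap Q^n$, hence $y(q) \in G^n \cap (V \setminus V')$; in the emerald case $(q\Z)^n$ is a discrete lattice and a fixed open $S'$ may miss it entirely. The Diophantine hypothesis $\ld_{\Q}(\beta q, \pi) = 2$ is what replaces density of $Q^n$: it is equivalent to the irrationality of $\beta q / \pi$ and makes $H$ Euclidean-dense in $\C^*$, since on each circle $|z| = r$ the arguments of elements of $H$ form the dense set $\{\beta \log r - \beta q k \pmod{2\pi} : k \in \Z\}$. I would exploit this by a global form of the implicit function argument, varying $s^0$ over all $\RR$-generic base-points so that the corresponding images $y(S'(s^0))$ assemble into a non-empty open subset of $V$ in its Euclidean topology (using that the covering $\X \to V$ is a local diffeomorphism at generic points). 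Since $V$ is a positive-dimensional subvariety of $(\C^*)^n$, its projective closure in $(\PP^1)^n$ meets the divisor at infinity, so some coordinate $|y_i|$ is unbounded on $V$, and correspondingly $\pi(\X)$ is unbounded in $\R^n$. Combining unboundedness of the open set $\pi(\X) \setminus \pi(y^{-1}(V'))$ with the Diophantine density of $H$, one obtains infinitely many lattice points $s \in (q\Z)^n$ in $\pi(\X) \setminus \pi(y^{-1}(V'))$, and the $y$-values at these lattice points yield the required Zariski-dense subset of $V \cap H^n$.

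I expect the hardest step to be the rigorous proof that these lattice-indexed points form a Zariski-dense subset of $V$ --- i.e., that they cannot all be absorbed by any single proper subvariety $V' \subsetneq V$. This is where the Diophantine hypothesis $\ld_{\Q}(\beta q, \pi) = 2$ plays its decisive role, substituting for the cheap topological density of the divisible group $Q$ used in the green proof.
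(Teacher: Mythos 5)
Your part (1) and your transfer of the Main Lemma and Lemma~\ref{ImplicitG} are fine, but the concluding density step --- which you correctly identify as the crux --- is not carried out, and the route you sketch (a ``global'' implicit function argument over all generic base points, unboundedness of $V$ at infinity, equidistribution of $H$ on circles) is not a proof. Density of $H$ in $\C^*$ does not by itself produce points of $H^n$ on the $n/2$-dimensional variety $V$ avoiding $V'$, and you concede yourself that the Zariski-density of the lattice-indexed points is unproven. As written, part (2) is incomplete.

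The missing observation, which is the paper's entire fix, is that the kernel of $\exp$ lets you replace the discrete lattice $q\Z$ by a dense subgroup of $\R$ in the $s$-coordinate. Since $2\pi i = \eps\cdot\frac{2\pi}{\beta} - \frac{2\pi}{\beta}$, one has
\[
H = \exp(\eps\R + q\Z) = \exp\Bigl(\eps\R + q\Z + \tfrac{2\pi}{\beta}\Z\Bigr),
\]
and the hypothesis that $\beta q$ and $\pi$ are $\Q$-linearly independent says exactly that $q\Z + \frac{2\pi}{\beta}\Z$ is dense in $\R$. Hence the very last step of the proof of Proposition~\ref{prop:green2} applies verbatim: the open set $S'$ meets $\bigl(q\Z + \frac{2\pi}{\beta}\Z\bigr)^n$, and for such an $s$ the point $y(s) = \exp(\eps t(s) + s)$ lies in $(V\setminus V')\cap H^n$. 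No global argument, and no appeal to the geometry of $V$ at infinity, is needed. (A small side remark on part (1): a preimage of a point of $H$ has the form $\eps t + s$ with $s \in q\Z + \frac{2\pi}{\beta}\Z$, so the role of $Q$ in the predimension computation is played by a rank-$2$ group, not the rank-$1$ group $q\Z$; this only changes the constant in the lower bound and the paper in fact just cites Remark~5.2 of \cite{CayGreenTh} for this part.)
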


The first part of the theorem follows directly from the analogous statement in the green case, by Remark~5.2 of \cite{CayGreenTh}. For the second part of the theorem, the proof of the analogous statement in the green case applies, simply using the density of $q \Z + \frac{2\pi}{\beta} \Z$ in $\R$, instead of that of the subgroup $Q$, at the very end of the proof.

\section{Models on the complex numbers: the elliptic curve case}
\label{section:elliptic-model}

In this section we find models for the theories of green points in the case of an elliptic curve without complex multiplication and whose lattice of periods is invariant under complex conjugation, under the assumption that the Weak Schanuel Conjecture for raising to powers on the elliptic curve holds. 


\subsection{The Models}

Let us fix an elliptic curve $\Elg$ without complex multiplication. Let $\E = \Elg(\C)$. We use the conventions introduced in Subsection~\ref{sec:elliptic}.

Let $\eps\in \C^*$ be such that $\eps \R \cap \Lambda = \{ 0 \}$. Put $G = \expE(\eps \R)$. 

\begin{Rem} \label{remalpha}
Note that $G$ is a divisible subgroup of $E$. 
%
%
Since $\Elg$ has no CM, $k_{\Alg} = \Q$ and, for any $y \subset E$, $\spank(y)$ is the divisible hull of the subgroup generated by $y$.
Also, $G$ is dense in $E$ in the Euclidean topology. To see this notice the following:
\[
G = \expE(\eps \R) = \expE(\eps \R + \Lambda) = \expE(\Gamma + \Z + \alpha \Z),
\]
for $\alpha = \Real(\tau) - \frac{\Real(\eps)}{\Imag(\eps)}\Imag(\tau) \in \R$. Since $\Gamma \cap \Lambda = \{0\}$, $\alpha$ is irrational. It follows that $\Z + \alpha \Z$ is dense in $\R$. Therefore the set $G = \expE(\eps + \Z + \alpha \Z)$ is dense in $\E$ in the Euclidean topology.
\end{Rem}

We now state the main theorem of this section.

\begin{Thm} \label{theorem:elliptic}
Let $\Elg$ be an elliptic curve without complex multiplication and let $\E = \Elg(\C)$. Assume the corresponding lattice  $\Lambda$ has the form $\Z + \tau\Z$ and $\Lambda = \Lambda^c$. 

Let $\eps = 1+ \beta i$, with $\beta$ a non-zero real, be such that $\eps \R \cap \Lambda = \{ 0 \}$. Put $G = \expE(\eps \R)$. 

Let $K = \Q(\beta i)$ and assume the Weak Elliptic Schanuel Conjecture for raising to powers in $K$ (wESC$_K$) holds for $\Elg$.

Then:
\begin{enumerate}
\item For every tuple $c \subset \E$, there exists a tuple $c' \subset \E$ extending $c$, such that $c'$ is strong with respect to the predimension function $(\delta_G)_{c'}$. If $c \subset G$, then we can find such a $c'$ also contained in $G$.
\item The structure $(\E,G)$ has the EC-property. Therefore, for every tuple $c \subset G$, strong with respect to $(\delta_G)_c$, the structure $(\E,G)_{X_0}$ is a model of the theory $T$, where $X_0 = \spank(c)$ with the structure induced from $(\E,G)$.
\end{enumerate} 
\end{Thm}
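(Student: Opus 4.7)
The plan is to mirror, beat for beat, the two-part structure of the proof of Theorem~\ref{theorem:green}: first a predimension inequality (yielding part~(1)), then the EC-property (part~(2)), which reduces via an implicit-function argument in an o-minimal expansion $\RR$ to a ``Main Lemma'' asserting the zero-dimensionality of a certain local analytic intersection in $\C^{2n}$. Throughout, wESC$_K$ for $\Elg$ will replace SC$_K$, the version of Ax's theorem for Weierstrass $\wp$-functions of \cite{KirAx} will replace the exponential Ax theorem, and the density of $\Z + \alpha\Z$ in $\R$, where $\alpha = \Real(\tau) - \Imag(\tau)/\beta$ (this density following from $\eps\R \cap \Lambda = \{0\}$, which forces $\alpha \notin \Q$), will replace the density of the divisible subgroup $Q$ used in the multiplicative case.

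\emph{Predimension inequality.} For $y \in G^n$ multiplicatively independent modulo $\Lambda$, write $y = \expE(\eps t)$ with $t \in \R^n$ and set $x = \eps t$. The hypothesis $\Lambda = \Lambda^c$ makes complex conjugation descend to $\E$ and commute with $\expE$, so $y^c = \expE(\eps^c t) = \expE(x^c)$. Since $\eps^c/\eps = (1-\beta i)/(1+\beta i) \in K$, we have $\ld_K(x x^c) \leq n$, while the $\R$-linear independence of $\eps, \eps^c$ combined with the $\Q$-linear independence of $x$ modulo $\Lambda$ yields $\ld_\Q(x x^c/\Lambda) \geq 2n - O(1)$. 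Applying wESC$_K$ to the tuple $x x^c$ and using $2\trd(y) \geq \trd(y y^c) \geq \trd(j(\Elg), y y^c) - O(1)$, one extracts a uniform lower bound $\delta_G(y) \geq -C$, exactly as in Lemma~\ref{lemma:predim-ineq}. Part~(1) is then formal: choose $c_0 \subset G$ realising the minimum of $\delta_G$, which is automatically strong, and let $c'$ generate the strong closure of $c \cup c_0$ under the (now non-negative) localisation $(\delta_G)_{c_0}$; the construction stays inside $G$ when $c \subset G$.

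\emph{EC-property.} Fix a rotund subvariety $V \subset \E^n$ of dimension $n/2$ over $k_0(C)$ and a proper subvariety $V'$. The unique decomposition $z = \eps t + s$ with $t, s \in \R$ identifies $\C/\eps\R$ with $\R$ and sends $\Lambda$ to $\Z + \alpha\Z$, so $z \in \eps\R + \Lambda$ iff $s \in \Z + \alpha\Z$. Set $\X = \{(s,t) \in \R^{2n} : \expE(\eps t + s) \in V\}$, a set locally definable in a suitable o-minimal expansion $\RR$ of $\R_{\mathrm{an}}$ by constants for the real and imaginary parts of elements of $C$ and $\Lambda$. Pick an $\RR$-generic $y^0 \in V \setminus V'$ and lift it to $(s^0, t^0) \in \X$, so $\dim_\RR(s^0, t^0) = n$. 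Granting the Main Lemma that $\dim_\RR(s^0) = n$, the argument of Lemma~\ref{ImplicitG} yields a continuous $\RR$-definable implicit function $t \colon S \to \R^n$ on an open neighbourhood $S$ of $s^0$ with $\expE(\eps t(s) + s) \in V$; shrinking $S$ to avoid the closed locus associated with $V'$ and using the density of $\Z + \alpha\Z$, we obtain some $s \in S \cap (\Z + \alpha\Z)^n$ and hence a point of $(V \setminus V') \cap G^n$.

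\emph{Main Lemma and main obstacle.} For $s \in \R^n$ the $K$-affine subspace $L_s \subset \C^{2n}$ is defined exactly as in Subsection~\ref{subsection:ProofMainLemma}; because $\Lambda = \Lambda^c$, the conjugate variety $V^c$ lives in $\E^n$, and the analogues of Lemmas~\ref{lemm:littlething} and~\ref{NormKG} (using rotundity of $V$ and $\End(\Elg) = \Z$) show that $(L_s, V \times V^c)$ is $K$-rotund in the sense of Definition~\ref{def-K-rotund-elliptic}. One then studies the analytic set $L_{s^0} \cap (\expE^n \times \expE^n)^{-1}(V \times V^c)$ near $(x^0, (x^0)^c)$ and claims that every local irreducible component has complex analytic dimension $0$; this forces $(x^0,(x^0)^c)$ to be $\RR$-definable over $s^0$, giving $\dim_\RR(s^0) = n$. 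If some component were positive-dimensional, a Baire Category argument as in Lemma~\ref{MainLemma} would produce uncountably many $(x, \bar x)$ on it with $(\expE(x),\expE(\bar x))$ generic in $V \times V^c$ over $C$; invoking the CCP for $\cl_{E,K}$ (valid under wESC$_K$ by the Ax--$\wp$ theorem, as noted after Conjecture~\ref{wESCK}) over a countable strong set $D'$ containing $\{s^0\}$ and a preimage of $C$, one selects such an $(x,\bar x)$ with $\delta_{E,K}(x \bar x/D') > 0$ and derives a contradiction to $K$-rotundity via the fibre-dimension identities of Subsection~\ref{subsection:ProofMainLemma}. The most delicate step is precisely this $K$-rotundity plus CCP argument in the elliptic setting: one must verify the $\wp$-CCP unconditionally enough to apply it here, and check that all bounds in the analogue of Lemma~\ref{NormKG} survive with $\End(\Elg) = \Z$ and $V^c \subset \E^n$, which is exactly what the hypotheses ``no CM'' and $\Lambda = \Lambda^c$ are designed to deliver.
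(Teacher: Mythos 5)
Your proposal is correct and follows essentially the same route as the paper: the same conjugation trick (enabled by $\Lambda = \Lambda^c$) applied to wESC$_K$ for the predimension bound, and the same o-minimal implicit-function reduction to a Main Lemma proved via $K$-rotundity of $(L_s, V\times V^c)$, the CCP for $\cl_{E,K}$ (via the Ax--Weierstrass theorem of \cite{KirAx}), and the Baire category argument. The only quibbles are cosmetic: in the non-CM elliptic setting the independence hypothesis is $\Q$-linear (additive) rather than ``multiplicative,'' and the paper's $\RR$ is a reduct of $\R_{\mathrm{an}}$ expanded by constants rather than an expansion of it.
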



Let us make some remarks about the hypotheses of the theorem. 
Firstly, assuming that the lattice $\Lambda$ has generators $\omega_1 = 1$ and $\omega_2 = \tau$ is not truly restrictive, for this can always be achieved by passing to an isomorphic elliptic curve.
Secondly, the assumptions of $\Elg$ having no CM and $\Lambda$ being invariant under complex conjugation are real restrictions on the generality of the result. The first assumption is essential, since we do not have an appropriate $\End(\Alg)$-submodule of $\E$ that serves as analogue of the subgroup $G$ defined above in the CM case. The second is necessary in our arguments for proving both the predimension inequality and the existential closedness for the structure $(\E,G)$. Let us remark that the two conditions hold for any non-CM elliptic curve defined over $\R$. 
Also note that, by the remarks at the end of Subsubsection \ref{ssec:elliptic}, the assumption that $\Lambda = \Lambda^c$ implies that $\Elg = \Elg^c$ and $j(\Elg)^c = j(\Elg)$.
Finally, let us recall that our assumption that the wESC$_K$ (\ref{wESCK}) holds for the single elliptic curve $\Elg$ means the following: for any tuple $x$ of complex numbers,
\begin{equation*}
\ld_K(x) + \trd(j(\Elg),\wp(x)) - \ld_{\Q}(x/\Lambda) \geq -\trd(K).
\end{equation*}


For the rest of this section, we work under the hypotheses of the theorem, that is: $\Elg$ has no CM, $\Lambda = \Z + \tau\Z$, and $\Lambda = \Lambda^c$.
Also, $\eps = 1+ \beta i$ with $\beta$ a non-zero real and $\eps \R \cap \Lambda = \{ 0 \}$, and $G = \expE(\eps \R)$. We set $K = \Q(\beta i)$ and assume the Weak Elliptic Schanuel Conjecture for raising to powers in $K$ holds for $\Elg$.

As in the previous section, we divide the proof of the theorem into the proofs of two propositions, Propositions \ref{prop:elliptic1} and \ref{prop:elliptic2}.

\subsection{The Predimension Inequality} \label{subsection:elliptic1}

In this subsection we prove the first part of Theorem~\ref{theorem:elliptic}.

\begin{Lemm} \label{ineqE}
For any tuple $y \subset \E$, $\delta_G(y) \geq - 4 -\trd(K) -2 \trd(k_0)$.
\end{Lemm}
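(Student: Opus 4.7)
The plan is to mirror the strategy of Lemma~\ref{lemma:predim-ineq} in the multiplicative case, substituting wESC$_K$ for SC$_K$ and keeping careful track of the extra base-field contribution that arises because $\Elg$ is defined over $k_0 \supsetneq \Q$. By the usual reduction (adding white points increases $\delta_G$, and omitting linearly dependent coordinates does not decrease it) we may assume $y \in G^n$ with $\ld(y) = n$, so that $\delta_G(y) = 2\trd_{k_0}(y) - n$. Write $y = \expE(x)$ with $x = \eps t$, $t \in \R^n$. Because $\Lambda = \Lambda^c$ we have $\Elg = \Elg^c$ and $\expE(z^c) = \expE(z)^c$, so $y^c = \expE(\eps^c t)$ and $x^c = \eps^c t$ is a valid preimage tuple.

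Complex conjugation is a field automorphism, so $\trd(y^c) = \trd(y)$ and therefore
\[
2\trd(y) \;=\; \trd(y) + \trd(y^c) \;\geq\; \trd(yy^c).
\]
Since $\wp(xx^c)$ sits inside the affine coordinates of $yy^c$ and $j(\Elg) \in k_0$, a straightforward subadditivity estimate gives $\trd(yy^c) \geq \trd(j(\Elg),\wp(xx^c)) - \trd(k_0)$ (or $-1$, using $\trd(j)\leq 1$). Applying wESC$_K$ to the tuple $xx^c$ then yields
\[
\trd(j(\Elg),\wp(xx^c)) \;\geq\; \ld_\Q(xx^c/\Lambda) - \ld_K(xx^c) - \trd(K),
\]
and combining with $\trd_{k_0}(y) \geq \trd(y) - \trd(k_0)$ gives
\[
2\trd_{k_0}(y) \;\geq\; \ld_\Q(xx^c/\Lambda) - \ld_K(xx^c) - \trd(K) - 3\trd(k_0).
\]

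It remains to bound $\ld_\Q(xx^c/\Lambda) - \ld_K(xx^c)$ from below by $n - 2$. For the $K$-dimension, observe that $\eps = 1 + \beta i$ and $\eps^c = 1 - \beta i$ both lie in $K = \Q(\beta i)$, so $\ld_K(xx^c) = \ld_K(\eps t, \eps^c t) \leq \ld_K(t) \leq n$. For the $\Q$-dimension, the hypothesis $\eps\R \cap \Lambda = \{0\}$ forces any $\Q$-linear relation $\sum q_i x_i \in \Lambda$ to be trivial, so $\ld$-independence of $y$ translates into genuine $\Q$-linear independence of $t$ in $\R$; since $\eps$ and $\eps^c$ are $\R$-linearly independent this gives $\ld_\Q(xx^c) = 2n$, and quotienting by the $2$-dimensional $\Q$-space $\Lambda_{\Q} = \Q + \Q\tau$ yields $\ld_\Q(xx^c/\Lambda) \geq 2n-2$. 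Putting these together produces $\delta_G(y) \geq -3 - \trd(K) - 2\trd(k_0)$, which is even stronger than the stated bound; the constant $4$ allows some slack in the intermediate estimates.

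The main obstacle I anticipate is not the inequality itself, which follows the green-case template closely, but rather the bookkeeping around $\trd_{k_0}$ versus $\trd$ and the careful use of the hypotheses $\Lambda = \Lambda^c$ and $\eps\R \cap \Lambda = \{0\}$. In particular, one must verify that $y^c$ genuinely lies on $\Elg$ (so that $yy^c$ makes sense as a tuple on a product of isogenous curves to which wESC$_K$ applies) and that the real-valued $t$ one gets from $G = \expE(\eps\R)$ actually inherits $\Q$-linear independence from the $\End(\Elg)$-independence of $y$; the lattice invariance hypothesis is what makes both of these work.
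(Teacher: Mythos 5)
Your proposal is correct and follows essentially the same route as the paper's proof: reduce to a multiplicatively independent tuple in $G^n$, exploit $\Lambda = \Lambda^c$ to apply the complex-conjugation trick, invoke wESC$_K$ for the tuple $x x^c$, and establish the two bounds $\ld_{\Q}(xx^c/\Lambda) \geq 2n-2$ and $\ld_K(xx^c) \leq n$. The only (harmless) difference is bookkeeping: the paper conjugates at the level of $\wp(x)$ and passes to $y$ at the very end, losing $2 + 2\trd(k_0)$ and arriving at the stated constant $-4-\trd(K)-2\trd(k_0)$, whereas you conjugate $y$ directly and obtain the marginally stronger $-3-\trd(K)-2\trd(k_0)$, which of course suffices.
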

\begin{proof}
It is sufficient to show that for any $n$ and any $y \in G^n$ with
$\ld_{\Q}(y) = n$, we have 
\[
2 \trd_{k_0}(y) \geq n - 4 -\trd(K) - 2 \trd(k_0).
\]

Fix such $n$ and $y$. Let $x \in (\eps \R)^n$ be such that $\expE(x) = y$. Notice that $x$ is $\Q$-linearly independent over $\Lambda$.

Note the following
\begin{align*}
2 \trd(j(\Elg),\wp(x)) &\geq \trd(j(\Elg),\wp(x),(\wp(x))^c)\\
&= \trd(j(\Elg),\wp(x), \wp^c(x^c))\\
&= \trd(j(\Elg),\wp(x x^c)).
\end{align*}

By the wESC$_K$,
\[
\ld_K(x x^c) + \trd(j(\Elg), \wp(x), \wp(x^c)) - \ld_{\Q}(x x^c/\Lambda) \geq -\trd(K).
\]

Combining the above inequalities we obtain,
\[
2 \trd(j(\Elg),\wp(x)) \geq \ld_{\Q}(x x^c/\Lambda) - \ld_K(x x^c) -\trd(K).
\]

Now, on the one hand, since $\eps$ is not in $\R \cup i\R$, we know $\eps$ and
$\eps^c$ are $\R$-linearly independent and hence $\ld_{\Q}(x x^c) = \ld_{\Q}(x) +
\ld_{\Q}(x^c) = 2n$. Therefore
\[
\ld_{\Q}(x x^c/\Lambda) \geq \ld_{\Q}(x x^c) - \ld_{\Q}(\Lambda) = 2n -2.
\]

On the other hand, since $x^c = \frac{\eps^c}{\eps} x$ and $\frac{\eps^c}{\eps} \in K$, we have
\[
\ld_K(x x^c) \leq \ld_K(x) \leq n.
\]

Thus,
\[
2 \trd(j(\Elg),\wp(x)) \geq (2n - 2) - n - \trd(K) = n -2 -\trd(K).
\]

Hence, using the additivity properties of the transcendence degree, we see that 
\begin{align*}
2 \trd(\wp(x)) &= 2 \trd(j(\Elg),\wp(x)) - 2 \trd(j(\Elg)/\wp(x))\\
               &\geq 2 \trd(j(\Elg),\wp(x)) - 2\\ 
               &\geq  n-4-\trd(K)
\end{align*}
and, similarly,
\begin{align*}
2 \trd_{k_0}(\wp(x)) &= 2 \trd(\wp(x)) - 2 \trd(k_0/\wp(x))\\ 
                    &\geq n - 4 - \trd(K) - 2 \trd(k_0).
\end{align*}
Because $\wp(x)$ and $y = \expE(x)$ are interalgebraic over $k_0$, we obtain the inequality $2 \trd_{k_0}(y) \geq n - 4 -\trd(K) -2 \trd(k_0)$. 
\end{proof}

By the same argument as in Section~\ref{section:green-model}, one derives the following proposition.

\begin{Prop} \label{prop:elliptic1}
For every tuple $c \subset \E$, there exists a tuple $c' \subset \E$, extending $c$, such that $c'$ is strong with respect to the predimension function $(\delta_G)_{c'}$. If $c \subset G$, then we can find such a $c'$ also contained in $G$.
\end{Prop}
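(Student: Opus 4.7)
The plan is to follow the strategy of Proposition~\ref{prop:green1} essentially unchanged, substituting Lemma~\ref{ineqE} for Lemma~\ref{lemma:predim-ineq}. The key new input is the predimension inequality just established; everything else is standard strong-closure bookkeeping.

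First, Lemma~\ref{ineqE} tells us that the values of $\delta_G$ on finite-dimensional $\spank$-closed subsets of $E$ are bounded below by a fixed integer. I would therefore choose a tuple $c_0$ at which $\delta_G$ attains its minimum value. Such a $c_0$ is automatically strong: if $\delta_G(X/c_0) < 0$ for some finite-dimensional $\spank$-closed $X$, then $\delta_G(X \cup c_0) < \delta_G(c_0)$, contradicting minimality. Moreover, since
\[
\delta_G(X) = 2\trd_{k_0}(X) - \ld(X \cap G) \geq 2\trd_{k_0}(X \cap G) - \ld(X \cap G) = \delta_G(X \cap G),
\]
the minimum is realised on a $\spank$-closed subset of $G$, so we may arrange $c_0 \subset G$.

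With $c_0$ strong, the localisation $(\delta_G)_{c_0}$ is a non-negative predimension function. For an arbitrary tuple $c \subset E$, I would take $c'$ to be a tuple containing $c \cup c_0$ whose entries generate the strong closure of $c \cup c_0$ with respect to $(\delta_G)_{c_0}$. This closure exists and has finite linear dimension, by the standard facts on predimension functions recalled after Definition~\ref{pred2dim} (cf.\ \cite[Lemma 2.9]{CayGreenTh}). Then $c'$ is strong for $(\delta_G)_{c_0}$ by construction, and since $c_0 \subset c'$ this is equivalent to $c'$ being strong for $\delta_G$, hence for $(\delta_G)_{c'}$.

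The only point that requires a little extra attention is the claim that if $c \subset G$ then $c'$ may also be taken inside $G$. Here I would use that, for any $\spank$-closed $Y \subset G$, one has the identity $(X + Y) \cap G = (X \cap G) + Y$ (because $Y \subset G$), from which $\delta_G(X/Y) \geq \delta_G((X \cap G)/Y)$ follows immediately. Consequently a $\spank$-closed subset of $G$ is strong in $(E,G)$ as soon as it is strong against all $\spank$-closed subsets of $G$, so the entire minimum-plus-strong-closure construction can be performed while staying inside $G$. The main obstacle in the whole proof is really Lemma~\ref{ineqE} itself; once this has been secured under the wESC$_K$ hypothesis, the rest is formal.
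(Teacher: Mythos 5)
Your proof is correct and is essentially the paper's own argument: the paper proves Proposition~\ref{prop:elliptic1} simply by invoking the same strong-closure construction as in Proposition~\ref{prop:green1}, with Lemma~\ref{ineqE} replacing Lemma~\ref{lemma:predim-ineq} as the lower bound on $\delta_G$. Your additional details (why the minimising tuple is strong, and why the construction can be carried out inside $G$ via $\delta_G(X/Y) \geq \delta_G((X\cap G)/Y)$) correctly fill in steps the paper leaves as ``clearly'' and ``it is easy to see''.
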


\subsection{Existential Closedness} \label{subsection:elliptic2}

The following proposition completes the proof of Theorem~\ref{theorem:elliptic}.

\begin{Prop} \label{prop:elliptic2}
The structure $(A,G)$ has the EC-property. Therefore, for every tuple $c \subset G$, strong with respect to $(\delta_G)_c$, the structure $(A,G)_{X_0}$ is a model of the theory $T$, where $X_0 = \spank(c)$ with the structure induced from $(A,G)$.
\end{Prop}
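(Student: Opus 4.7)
The strategy is to transcribe the proof of Proposition~\ref{prop:green2} to the elliptic setting, replacing $\exp$ by $\expE$, SC$_K$ by wESC$_K$, and Ax's theorem by its Weierstrass analogue from \cite{KirAx}. Fix an even $n\geq 1$ and a rotund subvariety $V\subset\E^n$ of dimension $n/2$ defined over $k_0(C)$ for some finite $C\subset\C$. The assumption $\Lambda=\Lambda^c$ gives $\Elg=\Elg^c$, so complex conjugation carries $V$ to a subvariety $V^c\subset\E^n$, and after enlarging $C$ we may assume $V^c$ is also defined over $k_0(C)$. Consider the real-analytic set
\[
\X = \{(s,t)\in\R^{2n}:\expE(\eps t+s)\in V\}.
\]
From Remark~\ref{remalpha}, $G=\expE(\eps\R+\Z+\alpha\Z)$ for an irrational real $\alpha$, so $\Z+\alpha\Z$ is dense in $\R$ and any $(s,t)\in\X$ with $s\in(\Z+\alpha\Z)^n$ gives a point $\expE(\eps t+s)\in V\cap G^n$.

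Choose an o-minimal expansion $\RR$ of the real ordered field in which $\X$ is locally definable; concretely one may enrich $\R_{\mathrm{an}}$ by constants for the real and imaginary parts of the elements of $\Q(C)$, using that $\wp$ and $\wp'$ restricted to any bounded box avoiding $\Lambda$ are real-analytic. The crux of the proof is the elliptic analogue of the Main Lemma~\ref{MainLemma}: every $\RR$-generic point $(s^0,t^0)$ of $\X$ satisfies $\dim_\RR(s^0)=n$. Granted this, the definable choice property and $C^1$-cell decomposition in $\RR$, exactly as in Lemma~\ref{ImplicitG}, provide a continuous $\RR$-definable implicit function $s\mapsto t(s)$ on an open neighbourhood $S\subset\R^n$ of $s^0$ with $\expE(\eps t(s)+s)\in V$. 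For any proper subvariety $V'\subsetneq V$, pick $y^0\in V\setminus V'$ with $\dim_\RR(y^0)=n$ and a preimage $x^0=\eps t^0+s^0$; continuity of $t$ and density of $(\Z+\alpha\Z)^n$ in $\R^n$ then yield $s\in S\cap(\Z+\alpha\Z)^n$ with $\expE(\eps t(s)+s)\in(V\setminus V')\cap G^n$, proving the EC-property.

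The Main Lemma follows the pattern of Subsection~\ref{subsection:ProofMainLemma}. Define the $K$-affine subspace $L_{s^0}\subset\C^{2n}$ as in Section~\ref{subsection:ProofMainLemma}, and examine the analytic set $L_{s^0}\cap(\expE\times\expE)^{-1}(V\times V^c)$ in a bounded box $B$ with rational endpoints around $(x^0,(x^0)^c)$. It decomposes into finitely many irreducible analytic components $S_1,\dots,S_l$; the claim is that each has complex dimension $0$, whence the intersection is finite, $\RR$-definable over $s^0$, and forces $\dim_\RR(s^0)=\dim_\RR(x^0)=n$. Suppose some component $S$ has positive dimension. Baire category and irreducibility, together with the fact that $(y^0,(y^0)^c)$ is a generic point of $V\times V^c$ over $C$, produce uncountably many $(x,\bar x)\in S$ whose image under $\expE\times\expE$ is generic in $V\times V^c$ over $C$. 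Using the CCP of $\cl_{E,K}$ from Subsection~\ref{sec:elliptic} and the elliptic analogue of Remark~\ref{rem-kirby-K} providing a countable strong set $D$ for $\delta_{E,K}$, one may further require $\delta_{E,K}(x,\bar x/D') > 0$, where $D'$ is a countable strong extension of $D$ containing $s^0$ and suitable preimages of the defining parameters of $V$. A dimension count identical to the one in Subsection~\ref{subsection:ProofMainLemma} then produces an integer matrix $m$ of rank $k$ with
\[
\dim m\cdot L_{s^0}+\dim m\cdot(V\times V^c) < k,
\]
contradicting the $K$-rotundity of $(L_{s^0},V\times V^c)$ in the sense of Definition~\ref{def-K-rotund-elliptic}. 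This $K$-rotundity is proved as in Lemma~\ref{NormKG}: the rotundity of $V$ (hence of $V\times V^c$) gives $\dim m\cdot(V\times V^c)\geq k/2$, while Lemma~\ref{lemm:littlething}, whose proof is purely linear-algebraic in $\eps,\beta,K$ and $\R$, transfers verbatim to give $\dim m\cdot L_{s^0}\geq k/2$.

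The main obstacle is the dimension count in the Main Lemma, which relies on the additivity of fibre dimensions for the map $(\cdot)^m$ on $V\times V^c$ and on the complementary relation $\dim m\cdot L_{s^0}=\dim L_{s^0}-\dim L_{s^0}\cap N$. The no-CM hypothesis $\End(\Elg)=\Z$ ensures that proper connected algebraic subgroups of $\E^{2n}$ are exactly the identity components of kernels of integer matrices, so the algebro-geometric steps go through as in the multiplicative case. A further subtlety is that $\expE$ is not globally definable in any o-minimal expansion of $\R$; this is handled locally by working inside the bounded box $B$ where $\wp$ and $\wp'$ are definable in $\R_{\mathrm{an}}$, which suffices since the entire argument takes place in a neighbourhood of $(s^0,t^0)$.
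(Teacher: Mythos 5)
Your proposal is correct and follows essentially the same route as the paper, which itself proves Proposition~\ref{prop:elliptic2} by transcribing the argument of Proposition~\ref{prop:green2} with exactly the substitutions you make: $\expE$ for $\exp$, the density of $\Z+\alpha\Z$ for that of $Q$, local definability of $\wp$ in $\R_{\mathrm{an}}$ via the addition formula, the $K$-rotundity of $(L_{s^0},V\times V^c)$ in the sense of Definition~\ref{def-K-rotund-elliptic}, and the CCP of $\cl_{E,K}$ (noting only that in the elliptic case the countable strong set comes directly from the assumed wESC$_K$ rather than from an unconditional Kirby-type result).
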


The proof of the above proposition is the same as in Section~\ref{section:green-model}, with only very small differences. In order to be explicit about the differences, we review the different steps of the proof.

For the rest of Subsection~\ref{subsection:elliptic2}, let us fix an even number $n\geq 1$ and a rotund variety $V\subset(\C^*)^n$ of dimension $\frac{n}{2}$ defined over $k_0(C)$ for some finite subset $C$ of $\E$. We need to show that the intersection $V \cap G^n$ is Zariski dense in $V$.


Let us define the set 
\[
\X = \{ (s,t) \in \R^{2n} : \expE(\eps t + s) \in V \}.
\]
Note that if $(s,t)$ is in $\X \cap ((\Z + \alpha \Z)^n \times \R^n)$, where $\alpha = \Real(\tau) - \frac{\Real(\eps)}{\Imag(\eps)}\Imag(\tau)$, then the corresponding point $y : = \expE(\eps t + s)$ is in $V \cap G^n$  (see \ref{remalpha}). Thus, in order to find points in the intersection $V \cap G^n$, we shall look for points $(s,t)$ in $\X$ with $s \in (\Z + \alpha \Z)^n$.

As in the previous section, our strategy is to find an implicit function for $\X$ defined on an open set $S \subset \R^n$, assigning to every $s \in S$ a point $t(s) \in \R^n$ such that $(s,t(s)) \in \X$. Since $(\Z + \alpha \Z)^n$ is dense in $\R^n$, the intersection $S \cap (\Z + \alpha \Z)^n$ is non-empty, and therefore we can find points $(s,t(s))$ in $\X$ with $s \in (\Z + \alpha \Z)^n$.

Let $\RR$ be an o-minimal expansion of the real ordered field in a countable
language in which the function $\wp$ is locally definable (and therefore also the set $\X$) and having constants for the real and imaginary parts of each element of $k_0(C)$. The existence of such a structure $\RR$, as a reduct of $\R_{an}$, follows from the fact that the \emph{addition formula},
\[
\wp(z_1 + z_2) = - \wp(z_1) - \wp(z_2) + \frac{1}{4}(\frac{\wp'(z_1) -
\wp'(z_2)}{\wp(z_1) - \wp(z_2)})^2,
\]
allows to locally define $\wp$ in terms of its restriction to a closed
parallelogram contained in the interior of the fundamental parallelogram of
vertices $0, 1, \tau, 1+\tau$ (e.g. the one with vertices $\frac{1+\tau}{8}$, $\frac{3+\tau}{8}$,
$\frac{1+3\tau}{8}$, $\frac{3+3\tau}{8}$), around which it is analytic
(\cite{MacEll}). Indeed, this corresponds to the fact that $\expE$ is a homomorphism and its values can therefore be calculated from those of any restriction to an open subset of the fundamental parallelogram.


The proof of Proposition~\ref{prop:elliptic2} relies on the following main lemma:

\begin{Lemm}[\textbf{Main Lemma}] \label{MainLemmaE}
Suppose $(s^0,t^0)$ is an $\RR$-generic point of $\X$, i.e. $\dim_\RR(s^0,t^0) = \dim_\R \X = n$. Then $\dim_\RR(s^0) = n$.
\end{Lemm}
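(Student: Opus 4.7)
The plan is to mirror the proof of Lemma~\ref{MainLemma} from the multiplicative case, transposing each step into the elliptic setting by replacing the exponential map by $\expE$, the SC$_K$ by the wESC$_K$, and invoking the elliptic versions of rotundity (Definition~\ref{def-K-rotund-elliptic}) and the CCP (the pregeometry $\cl_{E,K}$). The assumption $\Lambda = \Lambda^c$, which gives $\Elg = \Elg^c$ and $j(\Elg) \in \R$, is what allows the complex-conjugation trick to go through as before; in particular, enlarging $C$ if necessary, the conjugate variety $V^c$ is an algebraic subvariety of $\E^n$ defined over $C$.

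The first step is to introduce, exactly as in Subsection~\ref{subsection:ProofMainLemma}, the $K$-affine subspace
\[
L_s = \{(x,\bar x) \in \C^{2n} : (x+\bar x) + \beta^{-1} i (x - \bar x) = 2s\},
\]
and to verify the same two facts: for real $s$, the elements $(x,x^c) \in L_s$ are exactly those with $x \in \eps\R^n + s$; and the analogue of Lemma~\ref{lemm:littlething} holds, namely $\dim m \cdot L_s \geq k/2$ for any rank-$k$ integer matrix $m$. Combined with the rotundity of $V \times V^c$ (which follows from rotundity of $V$ since conjugation preserves dimensions), this gives that $(L_{s^0}, V \times V^c)$ is $K$-rotund in the sense of Definition~\ref{def-K-rotund-elliptic}.

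Next I would examine the complex analytic set
\[
\Sigma := L_{s^0} \cap (\expE \times \expE)^{-1}(V \times V^c),
\]
which contains $(x^0,(x^0)^c)$. Working in a small rational-endpoint box $B$ around this point, decompose $\Sigma \cap B$ into finitely many irreducible analytic components $S_1, \dots, S_l$. The central claim is that each $S_i$ has complex dimension $0$. Assume for contradiction that some $S := S_i$ has positive dimension. Using Baire category together with the fact that $(x^0,(x^0)^c) \mapsto (y^0,(y^0)^c)$ is generic in $V \times V^c$ over $C$ (a transcendence-degree argument using $\dim_\RR$ as in the green case), one produces uncountably many $(x,\bar x) \in S$ whose image under $\expE \times \expE$ is generic in $V \times V^c$ over $C$. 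Invoking the CCP for $\cl_{E,K}$ via a countable strong set $D$ containing $C \cup \{s^0\}$ (Remark~\ref{rem-kirby-K}-style), one then finds such an $(x,\bar x)$ with $\delta_{E,K}(x,\bar x/D') > 0$.

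Translating this strict inequality into the geometry: if $N$ is the minimal $\Q$-affine subspace over the strong closure $D'$ containing $(x,\bar x)$, then
\[
0 < \delta_{E,K}(x,\bar x/D') \leq \dim L_{s^0} \cap N + \dim (V \times V^c) \cap (\expE \times \expE)(N) - \dim N.
\]
Since the left-hand side is positive we must have $\dim N < 2n$, so $N$ is cut out by some rank-$k \geq 1$ matrix $m$ over $\Z$, and the standard fibre-dimension formulas for algebraic varieties and for $K$-affine spaces give
\[
\dim m \cdot L_{s^0} + \dim m \cdot (V \times V^c) < 2n - \dim N = k,
\]
contradicting the $K$-rotundity of $(L_{s^0}, V \times V^c)$. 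Hence every $S_i$ has dimension $0$, so $\Sigma \cap B$ is finite and the point $(x^0,(x^0)^c)$ is locally $\RR$-definable over $s^0$; this forces $\dim_\RR(s^0) \geq \dim_\RR(x^0) = n$, which is what we want.

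The main obstacle I anticipate is the Baire/genericity step: verifying that $(y^0, (y^0)^c)$ is generic in $V \times V^c$ over $C$ requires care because we only know $\dim_\RR(s^0, t^0) = n$, and one must push this through the change of coordinates $(s,t) \mapsto (x,x^c) \mapsto (y, y^c)$ while tracking that $\RR$-definable closure controls transcendence degree. Additionally, the Weierstrass map $\expE$ is only locally biholomorphic (and locally $\RR$-definable on a fundamental domain), so the preimage $(\expE \times \expE)^{-1}(V \times V^c)$ should be understood as a genuine analytic subset of $\C^{2n}$ via the local analyticity of $\wp$, ensuring that the decomposition into irreducible analytic components is legitimate. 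Once these technicalities are handled, the argument is structurally identical to the multiplicative case.
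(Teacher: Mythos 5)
Your proposal is correct and follows exactly the route the paper takes: the paper's own proof of Lemma~\ref{MainLemmaE} consists precisely of transposing the argument of Subsection~\ref{subsection:ProofMainLemma} word for word, using Lemmas~\ref{lemm:littlethingE} and~\ref{NormKE} together with the CCP for $\cl_{E,K}$ in place of their multiplicative counterparts. The technical points you flag (genericity of $(y^0,(y^0)^c)$ via $\dim_{\RR}$, and local analyticity/definability of $\wp$) are exactly the ones the paper's setup is designed to handle.
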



To prove the Main Lemma, define the following set:

\begin{Def}
  For $s \in \C^n$ we define the set
  \[
  L_s = \{ (x,\bar x) \in \C^{2n}: (x + \bar x) + \beta^{-1} i (x - \bar x) = 2s \}.
  \]
\end{Def}


With the same proof as in Section~\ref{section:green-model}, we have:

\begin{Lemm} \label{lemm:littlethingE}
Suppose $s \in \R^n$. Then for all linearly independent $(m^1,n^1),\dots,(m^k,n^k) \in \Z^{2n}$ ($m^i,n^i \in \Z^n$), we have
\[
\dim (m,n) \cdot L_{s} \geq \frac{k}{2}.
\]
\end{Lemm}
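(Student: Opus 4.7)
The plan is to reproduce, \emph{mutatis mutandis}, the proof of Lemma~\ref{lemm:littlething} from the multiplicative case. The definition of $L_s$ and the statement here are identical to those in that lemma; moreover, the argument there is purely linear-algebraic over $K = \Q(\beta i)$, relying only on the parametrization of $L_s$ supplied by Remark~\ref{LRem} and on general facts about $K$-linear dimension. It does not involve $\expE$ or any elliptic data.

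First I would fix a countable set $D \subset \C$ over which $L_s$ is defined (we may assume $s \subset D$) and choose $t \in \R^n$ with $\ld_K(t/D) = n$. Since $s \in \R^n$, Remark~\ref{LRem}, whose statement and proof concern only the real-versus-imaginary parametrization of the fixed $K$-affine subspace $L_s$, applies verbatim and yields $(x, \bar x) := (\eps t + s,\, \eps^c t + s) \in L_s$. Consequently,
\[
\dim (m,n) \cdot L_s \;\geq\; \ld_K\bigl((m,n)\cdot(x,\bar x)/D\bigr).
\]
Next, perform the change of variables $m'^i = m^i + n^i$, $n'^i = m^i - n^i$; the corresponding transformation of $\Z^{2n}$ is invertible over $\Q$, so $(m',n')$ again has rank $k$. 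Using $\eps = 1 + \beta i$ and $\eps^c = 1 - \beta i$, a direct computation gives
\[
(m^i,n^i)\cdot(x,\bar x) \;=\; (m'^i,n'^i)\cdot(t, \beta i\, t) + (m^i + n^i)\cdot s,
\]
so, since $s \subset D$, the problem reduces to bounding $\ld_K\bigl((m',n')\cdot(t,\beta i\, t)/D\bigr)$ from below by $k/2$.

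The final step, following Lemma~\ref{lemm:littlething}, uses the $K$-vector space structure. Since the matrix $(m'\mid n')$ has rank $k$ over $\Q$, one can select $k$ of its $2n$ columns that are $\Q$-linearly independent; say $l$ of them come from the first $n$ columns (indexed by $j_1,\dots,j_l$) and $k-l$ from the last $n$ (indexed by $j_{l+1},\dots,j_k$). The corresponding $k\times k$ submatrix $M$ lies in $\GL_k(\Q) \subset \GL_k(K)$, and, modulo a contribution from the remaining columns that is absorbed into the $K$-span, one obtains
\[
\ld_K\bigl((m',n')\cdot(t,\beta i\, t)/D\bigr) \;=\; \ld_K(M\cdot t'/D) \;=\; \ld_K(t'/D),
\]
where $t' = (t_{j_1},\dots,t_{j_l},\beta i\, t_{j_{l+1}},\dots,\beta i\, t_{j_k})$. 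Using $\beta i \in K^\times$ and the $K$-independence of $t$ over $D$, we conclude $\ld_K(t'/D) \geq \max\{l,\, k-l\} \geq k/2$.

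There is no genuinely new difficulty compared to the multiplicative case; the only points to verify are that Remark~\ref{LRem} and the linear-algebraic argument above are insensitive to whether the ambient group is $\G_m$ or $\Elg$, which they plainly are, since neither references the exponential map or the group law.
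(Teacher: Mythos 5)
Your proposal is correct and is exactly the paper's approach: the paper itself proves Lemma~\ref{lemm:littlethingE} by the words ``with the same proof as in Section~\ref{section:green-model}'', i.e.\ by repeating the argument of Lemma~\ref{lemm:littlething} verbatim, which works because $L_s$ is defined identically and the whole argument is linear algebra over $K$ together with Remark~\ref{LRem}, neither of which sees the exponential map or the group $\Elg$. Your write-up is, if anything, slightly more explicit than the original (e.g.\ in isolating the $(m^i+n^i)\cdot s$ term and noting $s\subset D$), so there is nothing to add.
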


\begin{Lemm} \label{NormKE} 
Let $s \in \R^n$. Then the pair $(L_{s},V \times V^c)$ is $K$-rotund.
\end{Lemm}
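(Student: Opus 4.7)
The plan is to transcribe the proof of Lemma~\ref{NormKG} essentially verbatim into the elliptic setting, relying on the two preparatory observations below. The key point is that since $\Elg$ has no complex multiplication, $\End(\Elg)=\Z$, so the matrices with entries in $\End(\Elg)$ appearing in Definition~\ref{def-K-rotund-elliptic} are precisely the integer matrices used in the multiplicative argument; in particular the ``linearly independent'' hypothesis in Lemma~\ref{lemm:littlethingE} is exactly the rank hypothesis needed.

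Concretely, I would fix a $k\times 2n$ integer matrix $(m,n)$ of rank $k$ with rows $(m^1,n^1),\dots,(m^k,n^k)\in\Z^{2n}$, and then carry out two independent dimension estimates. First, Lemma~\ref{lemm:littlethingE} gives directly
\[
\dim (m,n)\cdot L_s \;\geq\; \tfrac{k}{2}.
\]
Second, I would argue that $V\times V^c$ is a rotund subvariety of $\Elg^{2n}$: the variety $V^c$ is rotund because rotundity is a statement about dimensions of images under $\End(\Elg)$-linear maps defined over the base field, and complex conjugation is a field automorphism; and rotundity of a product of rotund varieties is a formal consequence of the definition. This yields
\[
\dim (m,n)\cdot (V\times V^c) \;\geq\; \tfrac{k}{2}.
\]
Adding the two inequalities gives $\dim(m,n)\cdot L_s + \dim(m,n)\cdot(V\times V^c)\geq k$, which is exactly the $K$-rotundity of $(L_s,V\times V^c)$ required by Definition~\ref{def-K-rotund-elliptic}.

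I do not anticipate any substantial obstacle. The step that looks most like a ``fact in need of justification'' is the rotundity of $V\times V^c$; however, this is precisely the same statement that is used (without extensive comment) in the proof of Lemma~\ref{NormKG}, and its verification is entirely formal once one unwraps the definition. Lemma~\ref{lemm:littlethingE} has already been proved with an identical statement to its multiplicative counterpart, and neither the wESC$_K$ hypothesis nor the specific geometry of $\Elg$ enters the argument, so the lemma should follow at once.
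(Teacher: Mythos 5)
Your proposal is correct and is exactly the paper's argument: the paper proves Lemma~\ref{NormKE} by declaring it has "the same proof" as Lemma~\ref{NormKG}, namely combining Lemma~\ref{lemm:littlethingE} with the rotundity of $V\times V^c$ (which follows from that of $V$ since complex conjugation is a field automorphism and, under the standing hypothesis $\Lambda=\Lambda^c$, one has $\Elg^c=\Elg$) and adding the two $\frac{k}{2}$ bounds. Your observation that $\End(\Elg)=\Z$ in the non-CM case, so the integer-matrix formalism carries over unchanged, is precisely the point that makes the transcription work.
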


The proof of the Main Lemma of Section~\ref{section:green-model} from the analogous lemmas (see the end of Subsection~\ref{subsection:ProofMainLemma}) also works word by word in the new case.



The next lemma follows from the Main Lemma by the same argument as in Section~\ref{section:green-model}.

\begin{Lemm} \label{ImplicitE}
Suppose $(s^0,t^0)$ is an $\RR$-generic point of $\X$. There is a continuous $\RR$-definable function $s \mapsto t(s)$ defined on a neighbourhood $S\subset \R^n$ of $s^0$ and taking values in $\R^n$ such that for all $s \in S$, the point $y(s) := \expE(\eps t(s) + s)$ is in $V$.
\end{Lemm}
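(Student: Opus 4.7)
The plan is to repeat, essentially verbatim, the argument of Lemma~\ref{ImplicitG}; the only change is that now the Main Lemma~\ref{MainLemmaE} replaces Lemma~\ref{MainLemma}, and the ambient o-minimal structure is the one set up in this subsection (in which $\X$ is locally definable thanks to the addition formula for $\wp$ and the local definability of $\wp$ on a closed subparallelogram of the fundamental parallelogram).

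First I would let $\pi \colon \R^{2n} \to \R^n$ denote the projection onto the first $n$ coordinates, fix a bounded box $B$ with rational endpoints containing $(s^0, t^0)$, and set $\X^0 := \X \cap B$, which is $\RR$-definable. The Main Lemma~\ref{MainLemmaE} gives $\dim_\RR(s^0) = n$. Invoking the general fact recalled in Subsection~\ref{subsection:proof2} (for sets locally definable over a countable set in a countable o-minimal expansion of the reals, the maximum of $\dim_\RR(\cdot)$ over the set equals its topological dimension), I would obtain
\[
\dim_\R \pi(\X^0) \;=\; \max_{s \in \pi(\X^0)} \dim_\RR(s) \;\geq\; \dim_\RR(s^0) \;=\; n,
\]
and consequently $\dim_\R \pi(\X^0) = n$. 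Hence $\pi(\X^0)$ contains an open neighbourhood $S$ of $s^0$ in $\R^n$.

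Next I would apply the definable choice property of o-minimal expansions of ordered abelian groups (\cite[6.1.2]{vdD}) to obtain an $\RR$-definable map $t \colon \pi(\X^0) \to \R^n$ such that $(s, t(s)) \in \X^0 \subset \X$ for every $s \in \pi(\X^0)$. By the definition of $\X$, this means $y(s) := \expE(\eps t(s) + s) \in V$ for all such $s$. Finally, by the $C^1$-Cell Decomposition Theorem (\cite[7.3.2]{vdD}) together with the fact that the boundary of an $\RR$-definable subset of $\R^n$ has dimension strictly less than $n$ (\cite[4.1.10]{vdD}), the set of discontinuities of $t$ is $\RR$-definable of dimension $< n$. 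Shrinking $S$ to remove this set yields the required continuous $\RR$-definable section.

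There is essentially no new obstacle here: all the substantive work is concentrated in the Main Lemma~\ref{MainLemmaE}, and the passage from that lemma to the implicit function statement is a purely formal consequence of o-minimality of $\RR$ (definable choice, generic continuity, and the coincidence of $\dim_\RR$-rank with topological dimension), all of which are available for the o-minimal expansion of the real ordered field constructed above, since it is a reduct of $\R_{\mathrm{an}}$ expanded by countably many constants.
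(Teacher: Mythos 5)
Your proposal is correct and matches the paper's intended argument exactly: the paper itself disposes of Lemma~\ref{ImplicitE} by stating that it "follows from the Main Lemma by the same argument as in Section~\ref{section:green-model}," i.e.\ precisely the projection/definable-choice/generic-continuity argument of Lemma~\ref{ImplicitG} that you reproduce, with Lemma~\ref{MainLemmaE} in place of Lemma~\ref{MainLemma} and $\expE$ in place of $\exp$. Nothing further is needed.
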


Finally, also the proof of Proposition~\ref{prop:elliptic2} from the lemma above is the same as the corresponding proof in Section~\ref{section:green-model}, this time using the density of $\Z+\alpha\Z$ in $\R$, instead of that of the subgroup $Q$.


The question of whether the model on $E$ is $\omega$-saturated is open. Let us simply note that the remarks in Subsection~\ref{sec:omegasat} can be easily adapted to the elliptic curve case case.


\bibliography{jd-bib}
\end{document}